\documentclass{amsart}
\usepackage[utf8]{inputenc}
\usepackage[lite]{amsrefs}
\usepackage{amssymb, mathrsfs, comment}
\usepackage{stmaryrd}
\usepackage{enumitem}
\usepackage[english]{babel}
\usepackage{amstext} 
\usepackage{array}
\usepackage{longtable}

\newcolumntype{L}{>{$}c<{$}}

\usepackage{graphicx}

\DeclareMathOperator{\Aut}{Aut}
\DeclareMathOperator{\Ann}{Ann}

\DeclareMathOperator{\GL}{GL}
\DeclareMathOperator{\GSp}{GSp}
\DeclareMathOperator{\Sp}{Sp}

\DeclareMathOperator{\pr}{pr}
\DeclareMathOperator{\id}{id}
\DeclareMathOperator{\vol}{vol}

\DeclareMathOperator{\Spec}{Spec}

\newcommand{\brZ}{\breve{\mathbb{Z}}_p}
\newcommand{\brQ}{\breve{\mathbb{Q}}_p}

\numberwithin{equation}{section}
\theoremstyle{plain} 
\newtheorem{thm}[equation]{Theorem}
\newtheorem{cor}[equation]{Corollary}
\newtheorem{lemma}[equation]{Lemma}
\newtheorem{prop}[equation]{Proposition}

\theoremstyle{definition}
\newtheorem{defn}[equation]{Definition}

\theoremstyle{remark}
\newtheorem{rem}[equation]{Remark}
\newtheorem{claim}[equation]{Claim}
\newtheorem{ex}[equation]{Example}

\title[Automorphisms of generic supersingular abelian varieties]{Oort's conjecture on automorphisms of generic supersingular abelian varieties}

\author{Eva Viehmann}
\address{Universit\"at M\"unster, Einsteinstr. 62, 48149 M\"unster, Germany}
\thanks{I thank Valentijn Karemaker, Pol van Hoften, Paul Philippe, Fabian Schnelle and Chia-Fu Yu for interesting and helpful discussions. During this work, the author was partially supported by the DFG through CRC TRR 326 GAUS, CRC 1442 ``Geometry: Deformations and Rigidity'', under Germany's Excellence Strategy EXC 2044/2 –
390685587, Mathematics Münster: Dynamics–Geometry–Structure and by a Leibniz prize.}

\begin{document}

\begin{abstract}
We prove Oort's conjecture that generically on the supersingular locus of the moduli space of principally polarized abelian varieties of genus $g$ and in characteristic $p$, the automorphism group of the universal principally polarized abelian variety consists only of $\pm 1$, unless $g=2$ or $3$ and $p=2$. On the way, we provide an explicit description of the $a=1$ locus in the Rapoport-Zink space of principally polarized supersingular $p$-divisible groups of any dimension $g$. We also prove analogous results for generic automorphism groups on moduli spaces of supersingular $p$-divisible groups with and without polarization.
\end{abstract}

\maketitle


\section{Introduction}

Let $g\geq 1$ be an integer and let $p$ be prime. Denote by  $\mathscr A_g$ the moduli space of principally polarized abelian varieties of dimension $g$ over $\overline{\mathbb{F}}_p$ with level $N$-structure for some $N$ coprime to $p$. A central tool to study $\mathscr A_g$ is to decompose it according to the isogeny class of the $p$-divisible group of the universal abelian variety. In this way, one obtains a stratification into finitely many locally closed subschemes, called Newton strata. Let $\mathscr S_g$ denote the unique closed Newton stratum. Let $k$ be an algebraically closed field of characteristic $p$ and let $(A_x,\lambda_x)$ be the principally polarised abelian variety corresponding to a $k$-valued point $x$ of $\mathscr A_g$. Then $x$ factors through $\mathscr S_g$ if and only if  $A_x[p^{\infty}]$ is isogenous to a product of $g$ copies of the $p$-divisible group of a supersingular elliptic curve.

Chai and Oort \cite{ChaiOort} show that generically on each of the non-supersingular Newton strata in $\mathscr A_g$, we have Aut$(A_x,\lambda_x)=\{\pm 1\}$. The key tool of their proof is to study automorphisms on central leaves in these Newton strata, i.e. on the locus where the $p$-divisible group of the universal abelian variety is isomorphic to a fixed given one. A similar strategy cannot be applied to the supersingular locus where the central leaves are zero-dimensional. Nevertheless, Oort \cite[Problem 4]{EMO} conjectured in 2001 that if $g\geq 2$, also on a dense open subscheme of the supersingular Newton stratum we have Aut$(A_x,\lambda_x)=\{\pm 1\}$. In the 25 years since then, many people have studied this question.

\begin{itemize}
\item For $g=2$ and $p>2$ the conjecture has been shown by Ibukiyama \cite{Ibukiyama} and independently and with different methods by Karemaker and Pries \cite[ Prop.~7.6]{KaremakerPries}. Ibukiyama also shows that the conjecture is false if $g=p=2$.
\item For $g=3$, Karemaker, Yobuko and Yu \cite{KYY} consider the decomposition of the supersingular locus according to the size of Aut$(A_x,\lambda_x)$. They show that generically, the automorphism group is $\{\pm 1\}$ if $p>2$, but consists of 8 elements if $p=2$. 
\item For $g=4$ and all $p$, the conjecture is proved by Karemaker and Yu in \cite{KaremakerYu}, building on a description of the supersingular locus by Harashita \cite{HarPpav4}. For $g=4$ and $p>2$, an independent proof is also given by Dragutinovi\'c in \cite{Dragutinovic}.
\item Karemaker and Yu \cite{KaremakerYu} prove the conjecture for all even $g$ and all $p\geq 5$.
\end{itemize}

The goal of this work is to prove Oort's conjecture in all remaining cases.

\begin{thm}[Oort's conjecture]\label{thmmain}
Let $g\geq 2$ and let $p$ be a prime with $(g,p)\neq (2,2),(3,2)$. Then there is a dense open subscheme $Y$ of $\mathscr S_g$ such that for every $x\in Y(\overline{\mathbb{F}}_p)$, we have  ${\rm Aut}(A_x,\lambda_x)=\{\pm 1\}$.
\end{thm}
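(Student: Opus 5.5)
We reduce Theorem~\ref{thmmain} to a statement about supersingular $p$-divisible groups, using Rapoport--Zink uniformization of $\mathscr S_g$. Let $\mathbb X$ be a principally polarized supersingular $p$-divisible group of dimension $g$ over $k$, and let $\mathcal M$ be the associated Rapoport--Zink space. Then $\mathscr S_g$ is a finite union of quotients $\Gamma_i\backslash \mathcal M_{\mathrm{red}}$, where $\Gamma_i$ are arithmetic subgroups of the group $J(\mathbb Q_p)$ of self-quasi-isogenies of $\mathbb X$; concretely $\Gamma_i$ is the unit group of a maximal arithmetic subgroup of $G'(\mathbb Q)$, with $G'$ the inner form of $\GSp_{2g}$ attached to the quaternion algebra ramified at $p,\infty$. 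For a point $x\in \mathscr S_g$ lifting to $y\in\mathcal M(k)$, the group $\Aut(A_x,\lambda_x)$ is the stabilizer of $y$ in $\Gamma_i$. This group is finite, and it embeds into $\Aut(A_x[p^\infty],\lambda)$.

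The strategy is to show that for each element $\gamma\neq\pm1$ of finite order in $\Gamma_i$, the fixed locus $\mathcal M^\gamma_{\mathrm{red}}$ has no component of top dimension $\lfloor g^2/4\rfloor$. Since there are only finitely many $\Gamma_i$-conjugacy classes of finite-order elements (finiteness of torsion conjugacy classes in arithmetic groups), the union of their images is then a proper closed subset. We take $Y$ to be its complement, intersected with the dense open $a=1$ locus. On the $a=1$ locus, the theory of Li--Oort shows that the generic point of each irreducible component of $\mathscr S_g$ has $a(A_x)=1$, so it suffices to work there. We use the explicit description of the $a=1$ locus of $\mathcal M$ promised in the abstract. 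In Dieudonné-module terms, each such point is determined by a flag and by coordinates, namely a point of an open subset of a Deligne--Lusztig-type variety over the superspecial "minimal" module $M_0$ with $a=g$. The stabilizer of such a point in $J(\mathbb Q_p)$ then preserves the canonical filtration $M\supset FM+VM\supset\cdots$ and the minimal superspecial module $M_0$ containing $M$. Hence it lies in the compact group $\Aut(M_0,\langle\,,\rangle)$, which is a unitary or symplectic group over the maximal order of the quaternion algebra and reduces mod $p$ to a finite group of Lie type over $\mathbb F_{p^2}$.

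The key step is a local computation. Take $\gamma$ in this compact group, of finite order, not equal to $\pm1$. We show that the subvariety of the $a=1$ locus (inside the fiber over $M_0$) fixed by $\gamma$ has dimension strictly less than $\lfloor g^2/4\rfloor$. We split into two cases. If the reduction $\bar\gamma$ is non-central, it acts nontrivially on the top graded piece $M_0/FM_0$, a $g$-dimensional space with a hermitian form. The generic flag there, which is part of the parameters of an $a=1$ point, is then not $\bar\gamma$-stable, and we bound the dimension of the fixed flags by a codimension count. If $\bar\gamma$ is central but $\gamma\neq\pm1$, then $\gamma=\pm(1+p^m\delta)$ with $\delta\ne 0$ nilpotent mod $p$ in a suitable sense. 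Since $\gamma$ has finite order this forces $p$ small. The deeper coordinates (higher terms of the filtration) then cut down the dimension, except in small cases. This is exactly where the exceptions $(g,p)=(2,2),(3,2)$ should appear: there, elements like $1+\pi$, with $\pi$ a uniformizer of the quaternion algebra, can have order dividing $4$ and preserve everything generically.

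The main obstacle is the second case for $p=2$ and $p=3$. There genuine torsion elements congruent to $\pm1$ exist (for instance roots of unity of order $4$ or $3$ in the quaternion order), and one must check carefully, using the explicit coordinates, that their fixed loci drop dimension once $g\ge4$ for $p=2$ (resp.\ $g\ge2$ for $p=3$). I would first try to handle this by an induction on $g$: decompose $M_0$ according to the eigenspaces of $\gamma$ (after extending scalars), and use that an $a=1$ module cannot split compatibly with a nontrivial orthogonal decomposition. This forces $\gamma$ to act through a single character of a field $\mathbb Q_p(\gamma)$ embedded in the quaternion algebra. Then I would compare $\lfloor g^2/4\rfloor$ with the dimension of the corresponding unitary Rapoport--Zink space for $\mathbb Q_p(\gamma)$.
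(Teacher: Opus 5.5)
Your global reduction (finite-order automorphisms of the polarized $p$-divisible group, restriction to the $a=1$ locus, stabilizers contained in $\Aut(M_\tau)$) matches the paper. But the decisive local statement, that no torsion $\gamma\neq\pm1$ fixes a generic point of the $a=1$ component, is only asserted, and the case division you propose for it is wrong.

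A scalar reduction on $\Lambda_0/\Pi\Lambda_0$ need not be $\pm1$. Take $g$ odd and $\zeta\in\mu_{p+1}(\mathbb{F}_{p^2})\setminus\{\pm1\}$. Let $\gamma$ act on $N_0\oplus N_1$ as the Teichm\"uller element $[\zeta]\in\mathbb{Z}_{p^2}\subset\mathcal O_D$, i.e.\ by $[\zeta]$ on the $e_i$ and by $[\zeta^{\sigma}]$ on the $f_i$. On $N_{1/2}$ let it act by $[\zeta]$ or $[\zeta^{\sigma}]$, chosen so that $\gamma X_m=[\zeta]X_m$. Then $\gamma\in\Sp_g(\mathcal O_D)$, it stabilizes $\Lambda_0$, its order divides $p+1$, and it induces the scalar $\zeta$ on $\Lambda_0/\Pi\Lambda_0$, so it stabilizes every line there. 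Such elements exist for every $p$. So your claim that a central reduction forces $\gamma=\pm(1+p^m\delta)$, and hence forces $p$ to be small, is false. To exclude these elements you must pass to $\Pi\Lambda_0/p\Lambda_0$, where $\gamma$ acts by $\sigma(\zeta)$. There you need that the coefficients $a_{i,1}$, $i\le\lfloor g/2\rfloor+1$, of a generator are free, so that $(\zeta-\sigma(\zeta))\sum_i a_{i,1}Y_i$ generically leaves the span of the images of $Fv$ and $Vv$; this is Step 2 in the paper. More generally, your ``codimension count'' presupposes the actual main work: an explicit description of the polarized $a=1$ locus (Proposition \ref{propcondai}) saying which coordinates of a generator are free and how the constrained ones may be perturbed. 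Without it, nothing prevents the polarization equations from pushing $\bar v$ into an eigenspace of $\bar\gamma$.

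Your plan for the hard case $p=2$ also breaks down. After Lemma \ref{lemmaKY63} one must still exclude torsion $\gamma\neq\pm1$ in $1+\Pi^2\mathrm{Mat}_g(\mathcal O_D)$, and for $p=2$ these include elements of $2$-power order. An example is the involution acting by $-1$ on $e_l,f_l,e_{g+1-l},f_{g+1-l}$ (with $l\neq g+1-l$) and by $1$ elsewhere. It lies in $\Sp_g(\mathcal O_D)$, stabilizes $\Lambda_0$, and is $\equiv 1\bmod 2=\Pi^2$. Its idempotents $(1\pm\gamma)/2$ are not integral, so a $\gamma$-stable Dieudonn\'e lattice of $a$-number $1$ need not be the direct sum of its intersections with the eigenspaces. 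Consequently the $a=1$ condition gives no contradiction, and $\gamma$ is not forced to act through a single character. Nor is its fixed locus a smaller unitary Rapoport--Zink space whose dimension could be compared with $\lfloor g^2/4\rfloor$. The same issue arises for other torsion elements of order divisible by $p$ with $\mathbb{Q}_p[\gamma]$ not a field.

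The paper instead works modulo $\Pi^3$. Using Lemma \ref{lempreps3} and explicit perturbations of solutions of the equations in Proposition \ref{propcondai} (plus a separate computation for $g=4$), it shows directly that $j$ induces a scalar on $\Lambda_0/p\Pi\Lambda_0$, hence $\pm1$. These arguments need $g\geq 4$; for $g=2,3$ with $p$ odd the paper relies on earlier work, so your uniform claim for $p=3$, $g\geq 2$ would need its own verification.
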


While writing up this work, Karemaker and Yu informed me that they are currently working out an independent proof for all $g$ that works for $p>2$.

The existing results towards Oort's conjecture use a wide range of different methods,  from explicit calculations of automorphisms of the associated $p$-divisible groups to considerations of Ekedahl-Oort strata and to the construction of abelian varieties with little automorphisms that arise as Jacobians of suitable curves. In all of these cases, it turns out that the conjecture is significantly more difficult to prove if $p=2$, and/or if $g$ becomes larger. 

Our strategy starts with some reduction steps already used in \cite{KYY} and \cite{KaremakerYu} for $g=3$ and $4$. In order to deal with the additional complexity due to general $g$ and all primes $p$, we then introduce and use a new description of the generic polarized supersingular $p$-divisible groups. In previous work, the descriptions were based on so-called polarized flag type quotients, a notion introduced by Li and Oort \cite{LiOort} to parametrize supersingular abelian varieties. Instead, we use Rapoport-Zink's uniformization theorem to relate the problem to certain Rapoport-Zink moduli spaces of polarized supersingular $p$-divisible groups. In Section 2 we explain the reduction strategy and several general results on the geometry of these moduli spaces, in particular their sets of irreducible components.

Generically on the involved Rapoport-Zink-spaces, the $a$-invariant of the universal $p$-divisible group is $1$. In Section \ref{seca1} we provide the main new tool that we later use to prove the conjecture. We prove an explicit description of the $a=1$-locus in the Rapoport-Zink space of polarized supersingular $p$-divisible groups of any dimension $g$ by analyzing the conditions that the generators of the associated Dieudonn\'e modules have to satisfy. In all previous proofs of particular cases of Oort's conjecture that used the $a=1$-locus, one had $g\leq 4$, and one could describe $\overline{\mathbb{F}}_p$-valued points of the $a=1$-locus using points of a subscheme of some affine space that is defined by a single Fermat-type equation as well as several open conditions. In contrast to that, for the general case we obtain in Remark \ref{reminitcond} a system of $g-3$ closed conditions that are induced by the polarization. These conditions take the form that certain pairings with values in the rational Witt vectors should take integral values. In Proposition \ref{propcondai} we rearrange these conditions in such a way that we obtain again an iterative description with coordinates in suitable subschemes of affine spaces subject to some open conditions and in total $|\{(l,j)\mid l\geq 0, 1\leq j<g-1-l, l+j\equiv g\pmod{2}\}|$ closed equations which are then again polynomials with values in $\overline{\mathbb{F}}_p$ instead of in the rational Witt vectors.

In Section \ref{secpfoort} we use this description to study automorphisms of supersingular $p$-divisible groups with $a=1$ and prove Theorem \ref{thmmain}. Here, we use an iterative approach, first considering all primes $p\geq 5$, then $p=3$, and finally $p=2$. The results of Section \ref{seca1} allow us to handle all $g\geq 5$ at once. For the convenience of the reader, we also explain how to reprove the $g=4$ case (originally proved in \cite{KaremakerYu}) using our methods.

In Section \ref{secnonpol} we extend our results of automorphism groups of generic supersingular $p$-divisible groups to the non-polarized case. This corresponds to studying the basic locus of a unitary Shimura variety of signature $(g,g)$ at a split prime $p$, i.e.~such that the associated group over $\mathbb{Q}_p$ is $\GL_{2g}$. We prove that generically on the supersingular locus, every automorphism of finite order of the associated $p$-divisible group is a unit root in $\mathbb{Z}_p^{\times}$.

\section{Reformulation}\label{secred}

In this section we review several results related to moduli spaces of principally polarized abelian varieties and $p$-divisible groups and apply them to reduce the proof of Theorem \ref{thmmain} to a statement about  self-dual Dieudonn\'e lattices of $a$-invariant 1.

\begin{defn}
Let $(A,\lambda)$ be an abelian variety over a field $k$ of characteristic $p$ together with a principal polarization. Then 
$$\Aut(A,\lambda):=\{\varphi:A\overset{\cong}{\longrightarrow} A\mid \lambda\circ \varphi=\varphi^{\vee}\circ \lambda\}.$$
\end{defn}

First recall  that the automorphism group of any principally polarized abelian variety is finite. In particular, in order to prove Theorem \ref{thmmain}, it is enough to show that generically on $\mathscr{S}_g$, the $p$-divisible group of the universal principally polarized abelian variety does not have any automorphisms of finite order other that $\pm 1$.

Furthermore, let $(A,\lambda)$ be a principally polarized abelian variety over $\overline {\mathbb{F}}_p$ and let $(X,\lambda)$ be its $p$-divisible group with the induced polarization. Then the natural map $\Aut(A,\lambda)\rightarrow\Aut (X,\lambda)$ is injective. In \cite{RZ}, Rapoport and Zink define and study moduli spaces of $p$-divisible groups in a very general context of which we need the following particular case. There is a unique isomorphism class of pairs consisting of a superspecial $p$-divisible group of dimension $g$ together with a principal polarization. Let $(\mathbb X,\lambda_0)$ be such a pair. We consider the functor assigning to any scheme $S$ such that $p$ is locally nilpotent on $S$ the set 
$$\mathcal M_g(S):=\{(X,\lambda,\rho)\}/\sim$$ where $X$ is a $p$-divisible group over $S$, where $\lambda$ is a $p$-power multiple of a principal polarization of $X$ and where $\rho: \mathbb{X}_{\bar S}\rightarrow X_{\bar S}$ is a quasi-isogeny over the reduction modulo $p$ of $S$ that is compatible with the polarizations. It is represented by a formal scheme $\mathcal M_g$ locally formally of finite type over Spf $\brZ$ where $\breve{\mathbb{Z}}_p=W(\overline{\mathbb{F}}_p)$. The uniformization theorem \cite[Thm.~6.30]{RZ} then implies that Theorem \ref{thmmain} follows from the following slightly stronger theorem.

\begin{thm}\label{thmgenautRZ}
There is an open and dense subscheme $Y\subseteq \mathcal M_g$, such that for any $y\in Y$ the reduction $(X_y,\lambda_y)$ of the universal polarized $p$-divisible group $(X,\lambda)$ on $\mathcal M_g$ does not have any automorphisms of finite order other that $\pm 1$. 
\end{thm}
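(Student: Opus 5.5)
Throughout, I assume the hypotheses $g\geq 2$ and $(g,p)\neq(2,2),(3,2)$ of Theorem \ref{thmmain}. Theorem \ref{thmgenautRZ} is understood for the reduced underlying scheme $\mathcal M_{g,\mathrm{red}}$, which is locally of finite type over $\overline{\mathbb F}_p$.

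\textbf{Step 1: reduction to single irreducible components.} The group $J(\mathbb Q_p)$ of self-quasi-isogenies of $(\mathbb X,\lambda_0)$ acts on $\mathcal M_{g,\mathrm{red}}$ with finitely many orbits of irreducible components. All these components are projective, since the underlying group is anisotropic modulo center in the supersingular case. An automorphism of finite order of $(X_y,\lambda_y)$ is a finite-order element of $\Aut(X_y,\lambda_y)$. Via $\rho$, it is identified with an element of $J(\mathbb Q_p)$ that stabilizes the Dieudonn\'e lattice $M_y\subset N=M(\mathbb X)[1/p]$. Hence it suffices to show the following for each irreducible component $Z$ (up to the $J$-action): there is a dense open $U\subset Z$ such that for $y\in U$, the only finite-order $h\in J(\mathbb Q_p)$ with $hM_y=M_y$ are $\pm1$.

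Two finiteness facts reduce this to finitely many $h$. First, if $h M_y=M_y$, then $h$ preserves the superspecial lattices canonically attached to $Z$, namely the minimal and maximal lattices determined by the component (the "vertex" lattice $\Lambda$ with $p\Lambda^\vee\subset\Lambda\subset\Lambda^\vee$, or its analogue in the description of components). So $h$ lies in the compact stabilizer $K_Z$. Second, the finite-order elements of $K_Z$ other than $\pm1$ act on the projective variety $Z$. For each such $h$, the fixed locus $Z^h$ is closed. Moreover, $Z^h$ depends only on the image of $h$ in a finite quotient $K_Z/K_Z^{(n)}$: the fixed condition on $M_y$ is determined modulo $p^n\Lambda^\vee$ for $n$ bounded by the lattice chain. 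Thus only finitely many closed subsets $Z^h$ occur. The theorem then follows once each $Z^h$ with $h\neq\pm1$ of finite order is a proper closed subset, i.e.\ once there exists a single point $y\in Z$ with $\Aut(X_y,\lambda_y)_{\rm tors}=\{\pm1\}$. Indeed, a finite union of proper closed subsets of an irreducible $Z$ has dense open complement.

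\textbf{Step 2: passing to the $a=1$ locus.} The $a=1$ locus is open and dense in each component. On it, the Dieudonn\'e module is generated by one element $x$ over the Dieudonn\'e ring, i.e.\ $M_y=\mathcal D\cdot x$. We parametrize $x$ in coordinates relative to a basis of $\Lambda^\vee$, subject to the self-duality conditions $\langle F^ix,F^jx\rangle\in\brZ$. This is the explicit description the paper announces; using it, the constraints become iterated polynomial equations over $\overline{\mathbb F}_p$. A finite-order $h\in K_Z$ stabilizing $M_y$ must send $x$ to $h x=\sum c_{ij}F^iV^jx$. Its reduction $\bar h$ acts on $\Lambda^\vee/p\Lambda^\vee$, or more precisely on $\Lambda^\vee/\Lambda$, through a finite unitary or symplectic group over $\mathbb F_{p^2}$ or $\mathbb F_p$.

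\textbf{Step 3: generic triviality.} For generic coordinates of $x$, the line spanned by the image of $x$ in the top graded piece is generic over $\mathbb F_{p^2}$, i.e.\ not defined over any finite field. Its stabilizer in the finite group of $\bar h$ is then only scalars $\mu_{p+1}$ or $\pm1$. Next, the finite-order elements of the congruence kernel are $p$-adically small, so they are trivial for $p>2$, because torsion in $1+p\,\mathrm{M}(\brZ)$ is trivial when $p>2$. Scalars $\zeta\in\mu_{p+1}$ must moreover be compatible with the polarization on the higher filtration pieces; that is, they must satisfy $\zeta\bar\zeta^{\sigma}$-type conditions on the next layer. This forces $\zeta=\pm1$ once $g$ is large enough to have a second layer.

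\textbf{Main obstacle.} The main obstacle is $p=2$ (and $p=3$ in small rank). There, torsion elements such as $1+2u$ with $u^2=-u$, or order-$3$ elements, can survive in the congruence kernel. One must then use deeper layers of the filtration $F^iV^jx$ and the closed conditions from Step 2 to show that generic coordinates rule them out. I would first try, for $p=2$ and $g\geq4$, to exhibit by hand one explicit $a=1$ point on each component with trivial torsion automorphisms, and then invoke the openness argument of Step 1. The excluded cases $g=2,3$ are precisely where the filtration is too short for this.
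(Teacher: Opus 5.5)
\textbf{There is a genuine gap.} Your Steps 1 and 2 match the paper's reduction: work on one component $\mathcal C_{\Lambda_0}$ of the $a=1$ locus, observe that only finitely many torsion elements of the compact stabilizer of $\Lambda_0$ matter (so the good locus is open), and write $M=\mathcal D v$ with $\Pi^{g-1}\Lambda_0\subseteq M\subseteq \Lambda_0$. The gap is in Step 3, which carries all the content.

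\textbf{The level bookkeeping is wrong.} The line of $\bar v$ in the top piece $\Lambda_0/\Pi\Lambda_0$ only controls $h$ modulo $\Pi$. The kernel of that reduction is $1+\Pi\,\mathrm{Mat}_g(\mathcal O_D)$, not $1+p\,\mathrm{M}(\brZ)$. It is torsion free for $p\geq 5$ (Lemma \ref{lemmaKY63}), but not for $p=2,3$.

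For example, let $p=3$ and let $\zeta_3\in\mathcal O_D$ be a primitive cube root of unity. Then $\zeta_3\equiv 1\bmod \Pi$ and $\mathrm{Nrd}(\zeta_3)=1$, so $\zeta_3\cdot\mathrm{id}$ is an order-$3$ element of $J(\mathbb Q_p)$ that stabilizes $\Lambda_0$ and acts trivially on $\Lambda_0/\Pi\Lambda_0$. Your argument does not exclude it.

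So for $p=3$ you must prove $h\equiv\pm1 \bmod \Pi^2$, and for $p=2$ modulo $\Pi^3$, in every rank $g\geq 4$. Note that only $(g,p)=(2,2),(3,2)$ are excluded, not $g=2,3$ in general, and $p=3$ is not a small-rank issue. This is the paper's Claim \ref{keyclaim} for $s=2,3$. One writes the reduction of $j$ as $X_i\mapsto X_i+\sum_l\delta_{li}Y_l$ (resp.\ $X_i\mapsto X_i+\sum_l \delta_{li}pX_l$). One then shows that $j(v)-v$ cannot lie in the span of $Fv,Vv$ (resp.\ $F^2v,pv,V^2v$) unless the $\delta$'s vanish (resp.\ $j$ is scalar). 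The tools are:
\begin{itemize}
\item minors of the coordinate matrix of these vectors;
\item the freedom, coming from Propositions \ref{propSysEq} and \ref{propcondai}, to change a single coordinate $a_{n,0}$: the minor is a purely inseparable polynomial of degree $\leq p$ in $a_{n,0}$, while $p-1$ further admissible values exist;
\item for $p=2$, the substitutions $a_{m,0}\mapsto\varepsilon a_{m,0}$ and $a_{l,0}\mapsto a_{l,0}+a_{g+1-l,0}$ together with Lemma \ref{lempreps3}.
\end{itemize}
You offer nothing in place of this. Exhibiting one point by hand cannot cover all $g\geq4$.

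\textbf{Two steps at level $\Pi$ are also unsupported.} First, the genericity condition is the wrong one. The phrase ``not defined over any finite field'' only makes sense at a generic point, since every $\overline{\mathbb F}_p$-line is defined over a finite field. Even there it does not force the stabilizer to be scalar, because a non-scalar element stabilizes every line in an eigenspace of dimension $\geq 2$. You need the line to lie in no proper subspace rational over a finite field, and you must check that the polarization equations on the $a_{i,0}$ allow this. The paper does it by using the free coordinates $a_{1,0},\dots,a_{\lfloor g/2\rfloor+1,0}$ to arrange that $\tau^g(\bar v)$ is not a combination of at most $\lfloor g/2\rfloor$ of the eigenvectors $\tau^i(\bar v)$ with $i<g$. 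This forces the eigenvalue into $\mathbb F_{p^2}$.

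Second, the polarization cannot cut $\mu_{p+1}$ down to $\pm1$. It only imposes $\zeta\zeta^{\sigma}=1$, and $[\zeta]\cdot\mathrm{id}\in \Sp_g(\mathcal O_D)$ preserves the pairing on every layer. The actual mechanism is the $\sigma$-linearity of $F$. The element $j$ acts by $\zeta$ on $\Lambda_0/\Pi\Lambda_0$ but by $\sigma(\zeta)$ on $\Pi\Lambda_0/p\Lambda_0$. The $a_{i,1}$ with $i\leq\lfloor g/2\rfloor+1$ are free, while $(M\cap\Pi\Lambda_0)/(M\cap p\Lambda_0)$ is spanned by the images of $Fv,Vv$, which depend only on the $a_{i,0}$. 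Hence stability of $M$ forces $\zeta=\sigma(\zeta)$.
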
 
 To prove Theorem \ref{thmgenautRZ}, it is enough to consider the underlying reduced subscheme of $\mathcal M_g$, a scheme locally of finite type over $\Spec \overline{\mathbb{F}}_p$. By abuse of notation we will from now on refer to this reduced scheme by $\mathcal M_g$.

The $a$-number of a $p$-divisible group $X$ over an algebraically closed field of characteristic $p$ is defined as $\dim {\rm {Hom}} (\alpha_p,X)$. Let $\mathcal M_{g}^{\circ}$ be the locus in $\mathcal M_{g}$ where the $a$-number of the universal $p$-divisible group is $1$. By \cite[Lemma 3.1]{polpdiv}, $\mathcal M_{g}^{\circ}$ is open and dense in $\mathcal M_{g}$. Thus it is enough to study the automorphism group of the universal polarized $p$-divisible group $(X,\lambda)$ over $\mathcal M_{g}^{\circ}$.

Recall the equivalence between $p$-divisible groups over $\bar{\mathbb F}_p$ 
and their Dieudonn\'e modules. Let $\brZ =W(\overline{\mathbb{F}}_p)$ and let $\breve{\mathbb{Q}}_p$ be its field of fractions. Let $\sigma$ denote the Frobenius morphism of $\overline{\mathbb{F}}_p$ over $\mathbb{F}_p$ and also of $\breve{\mathbb{Q}}_p$ over $\mathbb{Q}_p$. For an element $a$ we use $a^{\sigma}:=\sigma(a)$.  In our case, the Dieudonn\'e module of a triple $(X,\lambda,\rho)$ associated with some $x\in \mathcal M_g(\overline{\mathbb{F}}_p)$ is a triple $(M,F,\langle\cdot,\cdot\rangle)$ where $M$ is a $2g$-dimensional free $\brZ$-module, $F:M\rightarrow M$ is $\sigma$-linear with $M\supseteq F(M)\supseteq pM$ and $\langle \cdot,\cdot\rangle$ is a symplectic pairing on $M\otimes_{\brZ}\brQ$ such that $M^{\vee}=c\cdot M$ for some $c\in \brQ$. Here, $M^{\vee}=\{v\in M\otimes_{\brZ}\brQ\mid \langle v,M\rangle\subseteq \brZ\}.$ Furthermore, $F$ preserves the symplectic pairing up to a scalar. We have 
\begin{align*}
\Aut (X,\lambda)&=\Aut (M,F,\langle\cdot,\cdot\rangle)\\
&=\{j:M\rightarrow M\mid j\circ F=F\circ j, \langle jv,jw\rangle=\langle v,w\rangle \text{ for all }v,w\in M\}.
\end{align*}
As usual, the Verschiebung on $M$ is defined as $V:=pF^{-1}$. One then has $a(X)=\dim_{k} M/(FM+VM)$, and for any Dieudonn\'e module $M$, the corresponding quantity $a(M):=\dim_{k} M/(FM+VM)$ is also called the $a$-number of $M$. Let 
$$\mathscr D=\brZ[F,V]/(\sigma(a)F-Fa, aV-V\sigma(a), FV-p,VF-p)$$ 
be the Dieudonn\'e ring for $\overline{\mathbb F}_p$. Then $a(M)=1$ if and only if there is a $v\in M$ with $\mathscr D\cdot v=M$.

Let $(N,F,\langle\cdot,\cdot\rangle)$ be the rational Dieudonn\'e module of $(\mathbb{X},\lambda_0)$. Then $N\cong \mathbb{Q}_p^{2g}$. Since $\mathbb{X}$ is superspecial, there is a basis $e_1,\dotsc, e_g, f_1, \dotsc, f_g$ of $N$ such that $F(e_i)=f_i$, $F(f_i)=pe_i$ and  
\begin{equation}\label{eqdefpair}
\langle e_i,f_{g+1-i}\rangle=- \langle f_i,e_{g+1-i}\rangle=1
\end{equation}
for all $i$, and all other pairings among standard basis vectors vanish. Let $\breve N=N\otimes_{\mathbb{Q}_p}\breve{\mathbb{Q}}_p$. Then any $M$ as above is identified via $\rho$ with a sublattice of $\breve N$, compatible with the induced Frobenius morphisms $F$ and symplectic pairings. In particular, we have 
\begin{align*}
\Aut (M,F,\langle\cdot,\cdot\rangle)&\subset \Aut (\breve N,F,\langle\cdot,\cdot\rangle )\\
&=\{j:\breve N\rightarrow \breve N\mid j\circ F=F\circ j, \langle jv,jw\rangle=\langle v,w\rangle \text{ for all }v,w\in \breve N\}\\
&=: J(\mathbb{Q}_p).
\end{align*}
The above identification of $\breve N$ with $\brQ^{2g}$ induces an isomorphism $J(\mathbb{Q}_p)\cong GSp_g(D)$ where $D$ is the division algebra of invariant $\tfrac{1}{2}$ over $\mathbb Q_p$. Let $\mathcal O_D$ denote a maximal order of $D$ and $\Pi\in \mathcal O_D$ a uniformizer with $\Pi^2=p$. The group $J(\mathbb{Q}_p)$, which can also be interpreted as the group of self-quasi-isogenies of $(\mathbb{X},\lambda_0)$, has a natural action on $\mathcal{M}_g$ and on $\mathcal M_g^{\circ}$ by precomposing $\rho$ with the respective element of $J(\mathbb{Q}_p)$. By \cite[Thm.~2]{polpdiv}, this action is transitive on the set of irreducible components of $\mathcal M_g$ or, equivalently, of $\mathcal M_g^{\circ}$.  For $(X,\lambda,\rho)$, $(X',\lambda',\rho')$ in the same $J(\mathbb Q_p)$-orbit on $\mathcal{M}_g^{\circ}$, the automorphism groups are conjugate subgroups of $J(\mathbb{Q}_p)$ and hence isomorphic. Thus it is enough to show that on some fixed irreducible component of $\mathcal M_g^{\circ}$, the torsion part of the automorphism group of the generic principally polarized $p$-divisible group consists of $\pm 1$.

Let us describe how  $\mathcal M_g^{\circ}$ is subdivided into irreducible components, or equivalently into connected components. Let $\tau: \breve N\rightarrow \breve N$ with $$\tau:=p^{-1}F^2.$$ As a special case of \cite[Lemma 9]{ZinkSlope} we obtain

\begin{lemma}[Zink's lemma]
Let $X$ be a $p$-divisible group over $\overline{\mathbb{F}}_p$ and let $(M,F)$ be its Dieudonn\'e module. Then $M_{\tau}=M+\tau(M)+\dotsm$ is a Dieudonn\'e lattice and the unique smallest $\tau$-stable lattice containing $M$. 
\end{lemma}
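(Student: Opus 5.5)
We need to show three things: $M_\tau=\sum_{i\geq 0}\tau^i(M)$ is a lattice (finitely generated, i.e.\ the sum stabilizes), it is stable under $F$ and $V$, and it is the smallest $\tau$-stable lattice containing $M$. The last property is formal once the first is known. Any $\tau$-stable lattice $L\supseteq M$ contains every $\tau^i(M)$, hence $M_\tau$. Moreover $M_\tau$ is $\tau$-stable by construction, since $\tau(M_\tau)=\sum_{i\geq 1}\tau^i(M)\subseteq M_\tau$.

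For the Dieudonné property, I use that $F$ and $V$ commute with $\tau=p^{-1}F^2$, because $V=pF^{-1}$ and $\tau$ is a power of $F$ up to scalar. Hence $F(\tau^i M)=\tau^i(FM)\subseteq\tau^i M$, and similarly for $V$. So each $\tau^i(M)$ is $F$- and $V$-stable, and therefore so is their sum. (Additivity of $\tau$ is clear, and semilinearity does not matter for sums of $\brZ$-submodules.)

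The real content is finiteness, and this is where supersingularity is used, as in the paper's setting $M\subseteq \breve N$. In our situation $\breve N$ is isoclinic of slope $\tfrac12$, so $\tau$ is $\sigma^2$-linear of slope $0$. By Dieudonné–Manin, $\breve N$ has a basis fixed by $\tau$; for instance $e_i,f_i$ already satisfy $\tau(e_i)=e_i$ and $\tau(f_i)=f_i$. Let $\Lambda_0$ be the $\brZ$-span of this basis. It is $\tau$-stable, and $\tau$ acts on $\Lambda_0$ by coordinatewise $\sigma^2$, so $\tau(p^{n}\Lambda_0)=p^n\Lambda_0$. Since $M$ is a lattice, $M\subseteq p^{-n}\Lambda_0$ for some $n$, and then every $\tau^i(M)\subseteq p^{-n}\Lambda_0$. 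Thus the increasing chain $M\subseteq M+\tau M\subseteq\cdots$ lies in the finitely generated module $p^{-n}\Lambda_0$ over the noetherian ring $\brZ$. It therefore stabilizes, and $M_\tau$ is a lattice containing $M$.

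For a general $p$-divisible group, as in Zink's original lemma, one argues the same way isotypic piece by isotypic piece. On slope $\lambda=r/s$ one replaces $\tau$ by the appropriate slope-zero operator $p^{-r}F^{s}$ and uses a lattice spanned by fixed vectors. Only the supersingular case is needed here.

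The main step requiring care is exhibiting a $\tau$-stable lattice bounding all $\tau^i(M)$. In the supersingular case this is immediate from the explicit basis $e_i,f_i$ given before the lemma, so I expect no genuine obstacle.
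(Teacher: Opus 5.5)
Your proof is correct. It is not what the paper does, though: the paper gives no argument and simply quotes the statement as a special case of Zink's more general lemma \cite[Lemma 9]{ZinkSlope}. What you wrote is a self-contained proof of exactly the case that is used.

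The formal parts are fine: minimality, the inclusion $\tau(M_\tau)\subseteq M_\tau$, and $F$- and $V$-stability from $F\tau=\tau F$ and $V\tau=\tau V=F$. The finiteness step is also the right one. Since $\tau(e_i)=e_i$ and $\tau(f_i)=f_i$, the operator $\tau$ acts on $\langle e_i,f_i\rangle_{\brZ}$ as coordinatewise $\sigma^2$, so it preserves each $p^{-n}\langle e_i,f_i\rangle_{\brZ}$, and noetherianity does the rest.

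Compared with the citation, your route gains transparency but loses effectivity: it says nothing about how many terms $\tau^i(M)$ are needed. This does not matter for the statement. The quantitative control the paper actually uses later, namely $F^{g-1}M_\tau\subseteq M$ when $a(M)=1$, is proved separately in Lemma \ref{lem33}.

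Three minor points.
\begin{enumerate}
\item If ``$\tau$-stable'' is read as $\tau(L)=L$, you need one more line. The map $\tau$ is a $\sigma^2$-semilinear bijection of $p^{-n}\langle e_i,f_i\rangle_{\brZ}$, so it preserves the length of $p^{-n}\langle e_i,f_i\rangle_{\brZ}/L$. Hence $\tau(M_\tau)\subseteq M_\tau$ forces equality.
\item Rename your lattice, since $\Lambda_0$ is reserved in the paper for the lattice of Proposition \ref{proplambda0}.
\item Your closing aside on general $p$-divisible groups is unnecessary and too loose. With $\tau=p^{-1}F^2$ the statement only makes sense for supersingular $X$: on a slope-$0$ part the $\tau^i(M)$ are unbounded. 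Moreover, a Dieudonn\'e lattice need not split along the slope decomposition, so arguing ``piece by piece'' would need more care.
\end{enumerate}
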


\begin{cor}\label{corMtau}
Let $(M,F,\langle\cdot,\cdot\rangle)$ be the Dieudonn\'e module of a principally polarized supersingular $p$-divisible group and let $M_{\tau}$ be as in the previous lemma with the induced Frobenius and symplectic pairing (with values in $\breve{\mathbb{Q}}_p$). Then we have a natural injection $\Aut(M,F,\langle\cdot,\cdot\rangle)\subseteq \Aut(M_{\tau},F,\langle\cdot,\cdot\rangle)$.
\end{cor}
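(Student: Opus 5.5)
Every automorphism $j$ of $(M,F,\langle\cdot,\cdot\rangle)$ is the restriction of a unique element of $J(\mathbb{Q}_p)$, namely its $\breve{\mathbb{Q}}_p$-linear extension to $\breve N$. So the plan is to show that this element preserves $M_\tau$. Restricting to $M_\tau$ then gives an automorphism of $(M_\tau,F,\langle\cdot,\cdot\rangle)$. The resulting map is injective because $M\subseteq M_\tau$, so $j$ is determined by its restriction to $M_\tau$.

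First, $j$ commutes with $\tau$. Indeed, $j$ is $\breve{\mathbb{Q}}_p$-linear and commutes with $F$, so it commutes with $F^2$ and hence with $\tau=p^{-1}F^2$.

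Next, $j$ stabilizes $M_\tau$. Since $j(M)=M$,
$$j(\tau^n M)=\tau^n j(M)=\tau^n M$$
for every $n$. By Zink's lemma, $M_\tau=M+\tau(M)+\dotsm$, so $j(M_\tau)=M_\tau$. The same argument applied to $j^{-1}$ shows that $j|_{M_\tau}$ is bijective. Alternatively, one can note that $j(M_\tau)$ is a $\tau$-stable lattice containing $M$ and use the minimality statement in Zink's lemma.

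The restriction $j|_{M_\tau}$ commutes with the Frobenius of $M_\tau$, since that Frobenius is the restriction of $F$ on $\breve N$. It also preserves the pairing, since the pairing on $M_\tau$ is the restriction of the pairing on $\breve N$, which $j$ preserves by bilinear extension. Hence $j|_{M_\tau}\in\Aut(M_\tau,F,\langle\cdot,\cdot\rangle)$. The assignment $j\mapsto j|_{M_\tau}$ is a group homomorphism, and both groups sit naturally inside $J(\mathbb{Q}_p)$, which makes the inclusion natural.

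There is no real obstacle here. The only point needing care is that $M_\tau$ is a lattice, which is exactly what Zink's lemma provides. The polarization on $M_\tau$ is used only as the restricted pairing; it need not be perfect there.
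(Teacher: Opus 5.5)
Your proposal is correct and follows the paper's argument: $j$ commutes with $F$, hence with $\tau=p^{-1}F^2$, so it preserves $M_\tau$. The paper cites the uniqueness/minimality of $M_\tau$ from Zink's lemma for this, which you give as an alternative to using the explicit sum $M+\tau(M)+\dotsm$; your checks on the Frobenius, the pairing and injectivity spell out what the paper leaves implicit.
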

The lattice $M_{\tau}$ does not need to be self-dual. The set of automorphisms on the right is defined as those automorphisms of $(M_{\tau},F)$ that preserve the pairing $\langle\cdot,\cdot\rangle$ (with values in $\brQ$).
\begin{proof}
This follows from the uniqueness of $M_{\tau}$ together with the fact that any automorphism of $(M, F,\langle\cdot,\cdot\rangle)$ commutes with $F$ and hence with $\tau$.
\end{proof}

\begin{prop}\label{propsetirrcomp}
Let $S(\tau)$ be the set of $\tau$-stable Dieudonn\'e lattices in $N$, viewed as a discrete set. Then 
\begin{align*}
s:\mathcal M_g^{\circ}(\overline{\mathbb{F}}_p)&\rightarrow S(\tau)\\
M&\mapsto M_{\tau}
\end{align*}
defines a locally constant and $J(\mathbb{Q}_p)$-equivariant morphism of schemes $\mathcal M_g^{\circ}\rightarrow S(\tau)$. The image consists of a single $J(\mathbb Q_p)$-orbit. For any fixed lattice $\Lambda$ in the image, $\mathcal C_{\Lambda}:=s^{-1}(\{\Lambda_0\})$ is an irreducible component of $\mathcal M_g^{\circ}$.
\end{prop}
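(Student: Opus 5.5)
We prove the four assertions in turn: the map is well defined, it is locally constant, its image is one $J(\mathbb Q_p)$-orbit, and its fibers are irreducible.

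\textbf{Well-definedness and equivariance.} By Zink's lemma, $M_\tau$ is a $\tau$-stable Dieudonn\'e lattice, so $s$ is well defined. It is $J(\mathbb Q_p)$-equivariant because every $j\in J(\mathbb Q_p)$ commutes with $F$, hence with $\tau$. Therefore $(jM)_\tau=jM_\tau$ by the minimality characterisation of $M_\tau$.

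\textbf{Local constancy.} This is the step I expect to be the main obstacle. I would first show that on the $a=1$ locus the number of steps in $M_\tau=M+\tau M+\dots$ is uniformly bounded, with $M_\tau=M+\tau M+\dots+\tau^{g-1}M$. Since $a(M)=1$, there is a generator $v$ with $M=\mathscr D v$, and $M$ is spanned by the vectors $F^iv$ and $V^iv$. Then $M_\tau$ is spanned over $\brZ$ by the vectors $p^{-k}F^{i}v$ for suitable bounded $i,k$. The $\tau$-stable lattice generated is determined by the $\mathbb Q_p$-structure $N^{\tau}$.

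The cleanest route is probably the following. For $M$ with $a(M)=1$, the length $[M_\tau:M]$ is a constant $c_g$, which one can compute on the superspecial-type representative and which is determined by the Newton/EO data, since $a=1$ determines the Ekedahl--Oort type uniquely up to $\tau$-data. Moreover $p^{m}M_\tau\subseteq M\subseteq M_\tau$ for a fixed $m$. Given this, over a connected base $S$ with Dieudonn\'e crystal $\mathcal M$, the relative construction $\mathcal M+\tau\mathcal M+\dots+\tau^{g-1}\mathcal M$ is a locally free sub-crystal of $\breve N\otimes\mathcal O_S$ of constant rank and constant index. A $\tau$-stable lattice of fixed index between $p^{-m}\Lambda$ and $p^{m}\Lambda$ varies in a discrete set, because $\tau$-invariant lattices are defined over $\mathbb Q_p$ and $N^\tau$ is a $\mathbb Q_p$-structure. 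Hence $M_\tau$ is locally constant. Concretely, the set of $\tau$-stable lattices in a fixed bounded range is finite, and each condition $M_\tau\subseteq\Lambda'$, i.e.\ $M\subseteq\Lambda'$, is closed. Because the index is constant, the condition $M_\tau=\Lambda'$ is also open, which gives local constancy.

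\textbf{Single orbit.} Equivariance gives that the image is a union of $J(\mathbb Q_p)$-orbits. By \cite[Thm.~2]{polpdiv}, $J(\mathbb Q_p)$ acts transitively on the irreducible components of $\mathcal M_g^\circ$. Since $s$ is locally constant, it is constant on each irreducible component. The image is therefore the orbit of $s(\mathcal C)$ for any single component $\mathcal C$.

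\textbf{Irreducible fibers.} Each fiber $s^{-1}(\Lambda)$ is open and closed in $\mathcal M_g^\circ$, so it is a union of connected components. It remains to see that it is irreducible. I would identify $s^{-1}(\Lambda)$ with the locus of self-dual Dieudonn\'e lattices $M\subseteq\Lambda$ with $a(M)=1$ and $M_\tau=\Lambda$. This is an open subset of a closed subvariety of a partial flag variety for the finite symplectic/unitary group $\Lambda^\tau/p$ over $\mathbb F_{p^2}$. By the Li--Oort / \cite{polpdiv} description, this subvariety is irreducible: it is the generic part of one of the Deligne--Lusztig type components. Alternatively, one can argue by counting. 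The stabilizer of $\Lambda$ permutes the components in the fiber transitively, and a dimension and connectedness check, for instance via the explicit coordinates of Section~\ref{seca1}, shows there is only one.
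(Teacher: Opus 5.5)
Your arguments for well-definedness, equivariance and the single-orbit assertion are fine. Your local-constancy argument can also be made to work; the paper simply cites \cite[Section 6.1]{polpdiv} for all of this. By Lemma~\ref{lem33}(1), the index $[M_\tau:M]=g(g-1)/2$ is constant on the $a=1$ locus. So on the part of $\mathcal M_g^\circ$ of fixed height, the fiber over $\Lambda'$ is cut out by the closed condition $M\subseteq\Lambda'$, and locally only finitely many $\Lambda'$ occur. The remark about Ekedahl--Oort types and the relative crystal construction are unnecessary.

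The genuine gap is the irreducibility of the fibers, which is the actual content of the proposition.

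\textbf{First route.} This does not work as stated. For $g\geq 4$, Lemma~\ref{lem33}(1) shows that the image of $M\cap p\Lambda$ in $p\Lambda/\Pi p\Lambda$ is only $3$-dimensional. Hence $M\not\supseteq p\Lambda$, and the fiber is not a subvariety of a partial flag variety attached to $\Lambda/p\Lambda$. Nor is there a general Deligne--Lusztig description of these components to appeal to. As written, ``by the Li--Oort / \cite{polpdiv} description, this subvariety is irreducible'' asserts the conclusion rather than proving it.

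\textbf{Second route.} This stops exactly where the work begins. You correctly note that $\mathrm{Stab}_{J(\mathbb Q_p)}(\Lambda)$ permutes the irreducible components inside $s^{-1}(\Lambda)$ transitively. But a ``dimension and connectedness check'' via Section~\ref{seca1} does not finish the job, for three reasons. Those coordinates only describe $\overline{\mathbb F}_p$-points through highly non-unique generators $v$. The systems of Proposition~\ref{propSysEq} turn the parameter space into a tower of finite covers, whose connectedness is a genuine monodromy question. And connectedness alone would not give irreducibility without also knowing normality.

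The missing idea is a comparison of stabilizers. Let $\mathcal C\subseteq s^{-1}(\Lambda)$ be an irreducible component. Equivariance of $s$ and its constancy on $\mathcal C$ give $\mathrm{Stab}(\mathcal C)\subseteq\mathrm{Stab}(\Lambda)$, and $\mathrm{Stab}(\Lambda)$ is a parahoric subgroup of $J(\mathbb Q_p)$. By \cite[Thm.~A]{HeZhouZhu}, the stabilizer of any irreducible component is a \emph{maximal} parahoric subgroup, hence $\mathrm{Stab}(\mathcal C)=\mathrm{Stab}(\Lambda)$. Combined with your transitivity observation, the irreducible components in the fiber are in bijection with $\mathrm{Stab}(\Lambda)/\mathrm{Stab}(\mathcal C)$, which is a single point. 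So $s^{-1}(\Lambda)$ is irreducible.

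Alternatively, you could try to identify $s^{-1}(\Lambda)$, via $M\mapsto (M+F^{g-1-i}M_\tau)_i$, with the $a=1$ locus of a moduli space of rigid polarized flag type quotients, and then invoke the irreducibility results of \cite{LiOort}. That identification would have to be carried out, and your proposal does not do so.
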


\begin{proof}
The first two assertions are shown in \cite[Section 6.1]{polpdiv}. By \cite[Thm.~2]{polpdiv}, $J(\mathbb{Q}_p)$ acts transitively on the set of irreducible components of $\mathcal M_g^{\circ}$. The stabilizer of any given irreducible component is contained in the stabilizer of the associated element $\Lambda_0\in S(\tau)$, a maximal parahoric subgroup of $J(\mathbb{Q}_p)$.  By \cite[Thm.~A]{HeZhouZhu}, the stabilizer of any irreducible component is a maximal parahoric subgroup, and thus has to agree with the stabilizer of $\Lambda_0$. Hence $s^{-1}(\Lambda_0)$ is irreducible.
\end{proof}

In Proposition \ref{proplambda0}, we describe explicitly one of the lattices in the image of $s$. 

\begin{rem}
The lattice $M_{\tau}$ is also related to the notion of flag type quotients that is frequently used to parametrize the $a=1$-locus of $\mathcal M_g$ for small $g$. We do not use this notion in the course of this work, but let us briefly describe the relation for the convenience of readers who are more familiar with this other notion. Let $M\subseteq \breve N$ be the Dieudonn\'e lattice corresponding to some $x\in \mathcal M_{g}^{\circ}(\overline{\mathbb{F}}_p)$ and set for $ i\leq g-1$ $$M(i):= M+F^{g-1-i}M_{\tau}= M+V^{g-1-i}M_{\tau}.$$ Using \cite[Section 4]{modpdiv}, compare Lemma \ref{lem33} below, one can show that $F^{g-1}M_{\tau}\subseteq M\subseteq M_{\tau}$ and that $F^{g-1}M_{\tau}$ is the largest $\tau$-stable lattice contained in $M$. Thus $M(i)=M$ for all $i\leq 0$, $M(g-1)=M_{\tau}$, and each $M(i)/M(i-1)$ is annihilated by $F$ and by $V$. Our $\tau$-stable lattices are Dieudonn\'e modules of superspecial $p$-divisible groups and the $M(i)$ are the lattices occurring naturally as Dieudonn\'e modules of intermediate steps in flag type quotients. 
\end{rem}

\section{An irreducible component of the Rapoport-Zink space}\label{seca1}

Recall that $J(\mathbb Q_p)$ acts transitively on the set of irreducible components of $\mathcal M_g^{\circ}$ and the automorphism group of the universal polarized $p$-divisible group is constant on each $J(\mathbb Q_p)$-orbit. Thus it is enough to show that for some fixed irreducible component $\mathcal C_{\Lambda_0}$, there is a dense open subscheme on which the polarized $p$-divisible groups do not have automorphisms of finite order other than $\pm 1$. In this section we use the results in \cite{modpdiv} and \cite{polpdiv} to describe in Proposition \ref{proplambda0} one of the lattices in the image of $s$. We determine the stabilizer of that component in $J(\mathbb Q_p)$ and prove in Lemma \ref{propopen} that the locus where the only automorphisms of finite order are $\pm 1$ is open. In particular, for the proof of the main theorem it is enough to show that this locus is non-empty. We then introduce and study more explicit coordinates describing generators of the Dieudonn\'e lattices for the associated irreducible component $\mathcal C_{\Lambda_0}$. For $g=4$, this is related to the description given in \cite{HarPpav4}. We describe conditions for the Dieudonn\'e modules corresponding to geometric points of $\mathcal C_{\Lambda_0}$ that are later used to prove the main theorem. Note that as in \cite{modpdiv} one could use display theory and extend this description to describe the whole component. However, we do not need such a description here.

Let $M$ be a Dieudonn\'e lattice corresponding to a point of some $\mathcal C_{\Lambda_0}$. Since $\dim M/(FM+VM)=1$, the lattice is as Dieudonn\'e module generated by a single element. Let $v\in M$ be such a generator. It is unique up to scaling and up to elements of $FM+VM$. In the rest of this section, we conversely study the conditions for an arbitrary element $v\in \Lambda_0$ to be a generator of some Dieudonn\'e lattice corresponding to a point of $\mathcal C_{\Lambda_0}$.  In \cite{modpdiv} and \cite{polpdiv} the focus was to establish a bijection between the Dieudonn\'e modules of $a$-invariant $1$ and suitable uniquely chosen generators $v$. In this paper, the ambiguity in choosing the generator $v$ of $M$ is not relevant, so we consider all possible generating vectors and rather focus on the relations induced by the conditions that $M_{\tau}$ is a given $\tau$-stable lattice and that $M$ is self-dual. Additional conditions making the choice of $v\in M$ unique are already discussed in \cite{polpdiv}.

Assume at first that $v\in \breve N$ such that $M:=\mathcal D v$ is a lattice. Let $\Lambda_0=M_{\tau}$. It is an $F$- and $\tau$-stable lattice, thus $\Lambda_0/F\Lambda_0$ is a $g$-dimensional $\overline{\mathbb{F}}_p$-vector space for which we can choose $\tau$-stable generators $\bar X_1,\dotsc, \bar X_g$. We choose $\tau$-stable representatives $X_i\in \Lambda_0$ and let $Y_i=FX_i=VX_i$. The $X_i$ and $Y_i$ freely generate $\Lambda_0$ as $\brZ$-module. Moreover, they generate the submodule of $\tau$-stable vectors of $\Lambda_0$ as $\mathbb{Z}_{p^2}$-module.

We write 
\begin{equation}\label{eqcoordv}
v=\sum_{i=1}^g \sum_{l=0}^{\infty} [a_{i,l}]F^l X_i
\end{equation}
with $a_{i,l}\in \overline{\mathbb{F}}_p$. By \cite[Lemma 4.8]{modpdiv}, the condition that $M$ is a lattice with  $M_{\tau}=\Lambda_0$ is equivalent to $[a_{1,0}:\dotsm:a_{g,0}]$ not being contained in any $\mathbb{F}_{p^2}$-rational hyperplane in $\mathbb{P}^{g-1}$. 

\begin{lemma}\label{lemailinindep}
Let $x_1,\dotsc, x_n\in\overline{\mathbb{F}}_p$. Then $[x_1:\dotsm: x_n]$ is not contained in any ${\mathbb{F}}_{p^2}$-rational hyperplane if and only if the $n\times n$-matrix $(\sigma^{2j}(x_i))_{i,j}$ with $i,j=1,\dotsc, n$ is invertible. It is also equivalent to $(\sigma^{2j-l}(x_i))_{i,j}$ being invertible for any (or all) fixed $l$.
\end{lemma}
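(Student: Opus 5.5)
This is the Moore determinant criterion for the field $\mathbb{F}_{p^2}$, with $q=p^2$ and $\sigma^2$ the $q$-Frobenius $x\mapsto x^q$. I would prove both directions of the first equivalence directly and then reduce the shifted version to it.

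First, suppose $[x_1:\dotsm:x_n]$ lies in an $\mathbb{F}_{p^2}$-rational hyperplane. Then there are $c_1,\dotsc,c_n\in\mathbb{F}_{p^2}$, not all zero, with $\sum_i c_ix_i=0$. Applying $\sigma^{2j}$ fixes each $c_i$, so $\sum_i c_i\sigma^{2j}(x_i)=0$ for every $j$. Hence the nonzero vector $(c_i)$ lies in the left kernel of $(\sigma^{2j}(x_i))_{i,j}$, and the matrix is singular.

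For the converse, assume the matrix is singular. I would use the standard Moore-determinant argument. Consider the $\overline{\mathbb{F}}_p$-vector space $L\subseteq\overline{\mathbb{F}}_p^n$ of vectors $c$ with $\sum_i c_i\sigma^{2j}(x_i)=0$ for $j=1,\dotsc,n$. It is nonzero because the matrix is singular.

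The key step is to replace $L$ by the space $L'$ of $c$ satisfying $\sum_i c_i\sigma^{2j}(x_i)=0$ for all $j\in\mathbb{Z}$. This space is visibly stable under $c\mapsto\sigma^2(c)$, where $\sigma^2$ acts coordinatewise.

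To show $L'\neq0$, take a minimal-rank dependency. Choose the smallest $m$ such that the columns $j=1,\dotsc,m$ of $(\sigma^{2j}(x_i))$ are linearly dependent; equivalently, the vectors $w_j:=(\sigma^{2j}(x_i))_i$ for $1\le j\le m$ are dependent while $w_1,\dotsc,w_{m-1}$ are independent. Then $w_m=\sum_{j<m}b_jw_j$. Applying $\sigma^2$ gives $w_{m+1}=\sum_j\sigma^2(b_j)w_{j+1}$, which lies in $\mathrm{span}(w_2,\dotsc,w_m)\subseteq\mathrm{span}(w_1,\dotsc,w_{m-1})$. Inductively, every $w_j$ with $j\ge1$ lies in $W:=\mathrm{span}(w_1,\dotsc,w_{m-1})$. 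Applying $\sigma^{-2}$ in the same way handles $j\le0$. So all $w_j$ lie in $W$, which has dimension $m-1<n$. Its annihilator $L'$ is therefore nonzero and $\sigma^2$-stable.

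By Lang's theorem (Galois descent for subspaces of $\overline{\mathbb{F}}_p^n$ stable under the $q$-Frobenius), $L'$ is spanned by $\mathbb{F}_{p^2}$-rational vectors. A nonzero $c\in L'\cap\mathbb{F}_{p^2}^n$, taken at $j=0$, gives an $\mathbb{F}_{p^2}$-rational hyperplane containing the point. Alternatively, Lang's theorem can be avoided: pick $c\in L'$ with the minimal number of nonzero entries, normalized so that one entry equals $1$. Then $\sigma^2(c)-c\in L'$ has fewer nonzero entries, so it vanishes and $c$ is $\mathbb{F}_{p^2}$-rational. I will use this elementary version.

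For the shifted statement, note that $(\sigma^{2j-l}(x_i))_{i,j}=\sigma^{-l}\bigl((\sigma^{2j}(x_i))_{i,j}\bigr)$ entrywise. Since $\sigma$ is a field automorphism, its determinant is the $\sigma^{-l}$-image of the original determinant, so one is invertible exactly when the other is. This gives the claim for any fixed $l$, and hence for all $l$.

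The main obstacle is the descent step producing an $\mathbb{F}_{p^2}$-rational relation from an arbitrary one. The minimal-support trick resolves it, once $L'$ has been shown to be nonzero and Frobenius-stable.
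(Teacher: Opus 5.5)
Your proof is correct and follows essentially the same route as the paper. The paper's terse argument also rests on (i) the span of the iterates $\sigma^{2j}(x)$ stabilizing as soon as one iterate depends on its predecessors, and (ii) a $\sigma^2$-stable subspace of $\overline{\mathbb{F}}_p^n$ being $\mathbb{F}_{p^2}$-rational. The paper uses (ii) implicitly for spans, whereas you work dually with the annihilator $L'$ and prove the descent by the minimal-support trick.

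The one step stated too quickly is the extension to $j\le 0$ ("in the same way"). It holds because $\sigma^2(W)\subseteq W$ together with equality of dimensions gives $\sigma^{-2}(W)=W$; equivalently, minimality of $m$ forces $b_1\neq 0$.
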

\begin{proof}
Let $x=(x_i)\in \overline{\mathbb{F}}_{p}^n$. The first condition is equivalent to the condition that the $\overline{\mathbb{F}}_{p}$-vector space generated by all $\sigma^{2j}(x)$ (with $j\geq 0$) is equal to $\overline{\mathbb{F}}_{p}^n$. Applying $\sigma^2$, we may equivalently consider the $j\geq 1$. This condition holds if and only if these vectors for $1\leq j\leq n$ are linearly independent. Written up in coordinates, this is precisely the second condition in the lemma. Applying $\sigma^{-l}$, we get the last condition.
\end{proof}

\begin{lemma}\label{lem33}
Let $v$ be as in \eqref{eqcoordv} satisfying the conditions of Lemma \ref{lemailinindep}. Let $M=\mathcal D v$ and let $\Lambda_0=M_{\tau}$ be as above. 
\begin{enumerate}
\item For $i=0,\dotsc, g-1$, the sub-vector space $(M\cap F^{j}\Lambda_0)/(M\cap F^{j+1}\Lambda_0)\subseteq F^j\Lambda_0/F^{j+1}\Lambda_0$ is freely generated by the cosets of the $j+1$ vectors $F^aV^bv$ with $a,b\geq 0$ and $a+b=j$. 
\item $F^{g-1}\Lambda_0=\Pi^{g-1}\Lambda_0=V^{g-1}\Lambda_0\subseteq M$ is the largest $\tau$-stable lattice contained in $M$.
\item Assume that $M$ is self-dual. Then $$\Lambda_0\overset{g\tfrac{g-1}{2}}{\supseteq}M=M^{\vee}\overset{g\tfrac{g-1}{2}}{\supseteq}F^{g-1}\Lambda_0=\Lambda_0^{\vee}.$$
\end{enumerate}
\end{lemma}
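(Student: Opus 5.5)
The plan is to work in the graded pieces $F^j\Lambda_0/F^{j+1}\Lambda_0$. Since $\Lambda_0$ is $F$- and $\tau$-stable, $F\Lambda_0=V\Lambda_0=\Pi\Lambda_0$. Each quotient $F^j\Lambda_0/F^{j+1}\Lambda_0$ is a $g$-dimensional $\overline{\mathbb F}_p$-vector space with basis the images of $F^jX_i$, and $F,V$ induce $\sigma^{\pm1}$-linear bijections between consecutive pieces.

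\textbf{Part (1).} By \eqref{eqcoordv}, the image of $F^aV^bv$ in $F^{a+b}\Lambda_0/F^{a+b+1}\Lambda_0$ has coordinates $(\sigma^{a-b}(a_{i,0}))_i$ up to a fixed identification of the basis. Here we use $V X_i=FX_i$ and $\tau X_i=X_i$, so $V^bX_i=F^bX_i$ and the Teichmüller coefficients get twisted by $\sigma^{-b}$ instead of $\sigma^b$. For $a+b=j$, the exponents $a-b$ run over $j,j-2,\dots,-j$, which are $j+1$ values of the same parity spaced by $2$. Since $j+1\le g$, Lemma \ref{lemailinindep} (applied with a suitable shift $l$ to a submatrix of the invertible $g\times g$ matrix) shows these $j+1$ vectors are linearly independent.

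It remains to show they span the image of $M\cap F^j\Lambda_0$. Every element of $M$ is a $\brZ$-combination of the $F^aV^bv$ (noting $FV=p$). An element of $M\cap F^j\Lambda_0$ is therefore a combination of terms $F^aV^bv$ with $a+b\ge j$, plus $p$-multiples of lower terms. I argue by induction on $j$. Consider a combination of the lower-degree generators that lands deeper; by independence at each earlier level, its leading coefficients must vanish mod $p$. Dividing by $p=FV$ turns it into a combination of generators of degree two higher. This is the standard filtration argument of \cite[Section 4]{modpdiv}, and I would cite it for the bookkeeping. I expect this step to be the main technical point: one must check that no unexpected cancellations occur, and the Vandermonde-type independence from Lemma \ref{lemailinindep} is exactly what rules them out.

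\textbf{Part (2).} By (1) with $j=g-1$, the $g$ vectors $F^aV^{g-1-a}v$ span $F^{g-1}\Lambda_0/F^g\Lambda_0$ completely. Hence $M+F^g\Lambda_0\supseteq F^{g-1}\Lambda_0$, and Nakayama (iterating by $F$-adic completeness, using $F^g\Lambda_0=\Pi F^{g-1}\Lambda_0$) gives $F^{g-1}\Lambda_0\subseteq M$. Now suppose $L\subseteq M$ is $\tau$-stable. Then $F^{-(g-1)}L$ is also $\tau$-stable, so if it is contained in $\Lambda_0$ we are done. Otherwise $L$ meets some $F^j\Lambda_0\setminus F^{j+1}\Lambda_0$ with $j<g-1$ in a nonzero $\tau$-stable class, i.e. an $\mathbb F_{p^2}$-rational vector. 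But by (1) its image is spanned by vectors coming from at most $g-1$ twists of $(a_{i,0})$, and that span contains no nonzero $\mathbb F_{p^2}$-rational vector. Indeed, such a vector $w$ together with its $\sigma^2$-translates would force the span of all twists to lie in a proper rational subspace, contradicting Lemma \ref{lemailinindep}. A cleaner route is to take $L$ maximal and compare with $M_\tau$-duality, but I would try the direct argument first. The equalities $F^{g-1}\Lambda_0=\Pi^{g-1}\Lambda_0=V^{g-1}\Lambda_0$ are immediate from $F=V$ on $\tau$-invariants.

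\textbf{Part (3).} Summing the dimensions in (1) gives $\dim \Lambda_0/M=\sum_{j=0}^{g-1}(g-(j+1))=g\tfrac{g-1}{2}$. The total index is $\dim\Lambda_0/F^{g-1}\Lambda_0=g(g-1)$, so the other index is also $g\tfrac{g-1}{2}$. For duality: $M\subseteq\Lambda_0$ gives $\Lambda_0^\vee\subseteq M^\vee=M$. Since the pairing is compatible with $F$ up to scalar, $\Lambda_0^\vee$ is again $\tau$-stable, so by (2) we get $\Lambda_0^\vee\subseteq F^{g-1}\Lambda_0$. Comparing indices, $[\Lambda_0:\Lambda_0^\vee]=2[\Lambda_0:M]=g(g-1)=[\Lambda_0:F^{g-1}\Lambda_0]$, hence equality.
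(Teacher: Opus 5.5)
Your proposal is correct and follows essentially the paper's route. Part (1) is the same induction on $j$ using Lemma \ref{lemailinindep}. In part (2), your claim that for $j<g-1$ the span of the vectors $(a_{i,0}^{\sigma^{k}})_i$, $k=j,j-2,\dots,-j$, contains no nonzero $\mathbb{F}_{p^2}$-rational vector is the paper's $Fw-Vw\neq 0$ argument written in coordinates. To make it precise, note that $\sigma^2$-invariance of such a vector gives a nontrivial linear relation among the $j+2\le g$ vectors with $k=j+2,j,\dots,-j$. The only variation is in part (3): you combine $\Lambda_0^\vee\subseteq F^{g-1}\Lambda_0$ with an index count, whereas the paper dualizes the minimality of $M_\tau$ to see directly that $\Lambda_0^\vee$ is the largest $\tau$-stable lattice in $M^\vee=M$; both work.
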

Notice that for (1) and (2) we do not need the symplectic pairing, these assertions hold for any supersingular isocrystal $\breve N$.
\begin{proof}
For (1) we use induction on $j$ and thus may assume that $M\cap F^j\Lambda_0$ does not contain any non-trivial linear combination of the $F^aV^bv$ with $a+b<j$. Hence $(M\cap F^{j}\Lambda_0)/(M\cap F^{j+1}\Lambda_0)\subseteq F^j\Lambda_0/F^{j+1}\Lambda_0$ is generated by the residue classes of the $j+1$ vectors $F^aV^bv$ with $a,b\geq 0$ and $a+b=j$. We have to show that they are linearly independent. This, however, is an immediate consequence of Lemma \ref{lemailinindep} since the coordinate vector of the image of $F^aV^bv$ with respect to the basis $F^j X_i$ with $1\leq i\leq g$ is $(a_{i,0}^{\sigma^{a-b}})_i$.

The first assertion of (2) follows from (1). We claim that the sub-vector spaces $(M\cap F^{j}\Lambda_0)/(M\cap F^{j+1}\Lambda_0)$ for $j<g-1$ do not contain any $\tau$-stable element. Indeed, assume that some $w$ is contained in one of these sub-vector spaces. Then $Fw-Vw$ is a non-trivial linear combination of the $F^aV^bv$ with $a+b=j+1\leq g-1$ and hence non-zero. Applying $V^{-1}$ we obtain that $\tau(w)\neq w$. From the claim, (2) follows since any $\tau$-stable lattice is generated by its $\tau$-stable elements.

(3) then follows since $\Lambda_0^{\vee}$ is the largest $\tau$-stable lattice contained in $M^{\vee}$.
\end{proof}

It remains to study the additional condition that $M$ is self-dual. Recall that the Dieudonn\'e module of $(\mathbb{X},\lambda_0)$ is of the form $(M_0,F=b\sigma,\langle\cdot,\cdot\rangle)$ where $M_0$ is a free $\mathbb{Z}_p$-module with generators $e_1,\dotsc, e_g,f_1,\dotsc, f_g$, where $F(e_i)=f_i$, $F(f_i)=pe_i$ and the pairing is given by \eqref{eqdefpair}. For this basis we also identify $\Aut(\mathbb{X},\lambda_0)$ with $\GSp_{g}(\mathcal O_D)$ such that the uniformizer $\Pi$ acts by $\Pi e_i=f_i$ and $\Pi f_i=pe_i$. 

Note that $F,V$ and $\Pi$ are defined using the same values of the base vectors $e_i,f_i$. However, $\Pi$ is extended linearly, $F$ is $\sigma$-linear and $V$ is $\sigma^{-1}$-linear. 

The above choice of coordinates induces the following direct sum decomposition of $\breve N=M_0\otimes_{\mathbb{Z}_p}\brQ$. Let $N_0$ be the span of all $e_i,f_i$ with $i\leq \lfloor \tfrac{g}{2}\rfloor$ and let $N_1$ be the span of all $e_i,f_i$ with $g+1-i\leq \lfloor \tfrac{g}{2}\rfloor$. If $g$ is odd, let $m=(g+1)/2$ the {\it m}iddle index and let $N_{1/2}$ be the span of $e_{m},f_{m} $. Then all $N_j$ are stable under $F$, and 

$$\breve N=\begin{cases}
N_0\oplus N_1&\text{if }g\text{ is even}\\
N_0\oplus N_{1/2}\oplus N_1&\text{if }g\text{ is odd.}
\end{cases}$$
Furthermore, $N_0$ is orthogonal to $N_0\oplus N_{1/2}$ and similarly for $N_1$.

\begin{prop}\label{proplambda0}
Let 
\begin{equation}
\Lambda_0=\begin{cases}
\langle e_1,\dotsc, e_{g/2},\Pi^{-g+1}e_{g/2+1},\Pi^{-g+1}e_g\rangle_{\breve{\mathbb{Z}}_p[\Pi]}&\text{if }g\text{ is even,}\\
\langle e_1,\dotsc, e_{m-1},\Pi^{(-g+1)/2}e_{m},\Pi^{-g+1}e_{m+1},\Pi^{-g+1}e_g\rangle_{\breve{\mathbb{Z}}_p[\Pi]}&\text{if }g\text{ is odd.}
\end{cases}
\end{equation}
Then $\Lambda_0=M_{\tau}$ for some self-dual Dieudonn\'e lattice $M\subseteq \breve N$ of $a$-invariant 1.
\end{prop}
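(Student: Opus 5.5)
Since the image of $s$ is a single $J(\mathbb{Q}_p)$-orbit (Proposition \ref{propsetirrcomp}), there is at least one self-dual $M$ of $a$-number $1$. I will therefore show that the $\tau$-stable lattices occurring as $M_\tau$ are characterised by explicit invariants, and that $\Lambda_0$ has those invariants. The alternative is to write down a generator $v\in\Lambda_0$ directly and check self-duality, which seems harder in general.

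First I record the necessary conditions from Lemma \ref{lem33}(3). A lattice $\Lambda=M_\tau$ is $\tau$-stable and hence $\Pi$-stable, since on $\tau$-stable vectors $F$ acts as $\Pi$ up to the identification. It must satisfy
$$\Lambda^\vee=\Pi^{g-1}\Lambda=F^{g-1}\Lambda,$$
up to the scalar normalising the polarization, with index $g(g-1)$ in $\Lambda$.

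Conversely, I expect $\Pi$-stable $\mathcal O_D$-lattices $\Lambda$ with $\Lambda^\vee=\Pi^{g-1}\Lambda$ to form a single $J(\mathbb{Q}_p)$-orbit. The reason is that such $\Lambda$ correspond to self-dual (up to $\Pi^{g-1}$) hermitian $\mathcal O_D$-lattices for the quaternionic form defining $\GSp_g(D)$, and these are classified by the type of the lattice, i.e.\ a single orbit of vertices in the building. I would cite \cite{polpdiv}, where the maximal parahoric stabilising the lattices in the image of $s$ is identified, namely the one whose type is determined by $\Lambda^\vee=\Pi^{g-1}\Lambda$. Granting this, every lattice with this duality property, and in particular $\Lambda_0$, lies in the orbit of some $M_\tau$. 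Translating $M$ by the corresponding element of $J(\mathbb{Q}_p)$ then proves the proposition, since $J(\mathbb{Q}_p)$ preserves self-duality, $a$-numbers and the operation $M\mapsto M_\tau$.

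It remains to check $\Lambda_0^\vee=\Pi^{g-1}\Lambda_0$ directly from the pairing \eqref{eqdefpair}. The pairing pairs index $i$ with $g+1-i$. For $i\le g/2$ the generator $e_i$ pairs with $\Pi^{-g+1}e_{g+1-i}$ and its $\Pi$-multiples. Using $\langle \Pi x,y\rangle=\langle x,\Pi' y\rangle$ for the appropriate adjoint, and $\Pi^2=p$, the $\brZ$-basis $\Pi^k e_i,\Pi^{k}\Pi^{-g+1}e_{g+1-i}$ has dual basis shifted by exactly $g-1$ powers of $\Pi$. For odd $g$, the middle block $\Pi^{(-g+1)/2}e_m$ pairs with itself. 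Its dual is $\Pi^{(g-1)/2}e_m$ up to the block structure, which equals $\Pi^{g-1}\cdot\Pi^{(-g+1)/2}e_m$ as required. This is a direct block-by-block computation on $N_0\oplus N_1$ (and $N_{1/2}$), using the orthogonality of the $N_j$.

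The main obstacle is the orbit claim: that the duality condition pins down a single $J(\mathbb{Q}_p)$-orbit, which needs the classification of lattice types for $\GSp_g(D)$. If citing it is not clean, I would instead construct $M=\mathcal D v$ directly. Take $v=\sum_i [a_{i}]X_i+\dots$ with $[a_1:\dots:a_g]$ avoiding $\mathbb F_{p^2}$-rational hyperplanes, which Lemma \ref{lemailinindep} handles. Then impose the pairing conditions $\langle F^aV^bv,F^cV^dv\rangle\in\brZ$ and solve them order by order in $\Pi$ by choosing the higher coefficients $a_{i,l}$ successively, following \cite{polpdiv}.
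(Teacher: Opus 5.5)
Your route is genuinely different from the paper's, and it works once its key input is stated correctly and justified.

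\textbf{How the two proofs differ.} The paper never classifies $\tau$-stable lattices. It takes a self-dual $M$ with $a(M)=1$ and $\pr_0(M)_\tau=\Lambda_0\cap N_0$, which exists by \cite[Section 4]{polpdiv}. It computes $M_\tau\cap N_1$ from $M\cap N_1=\pr_0(M)^\vee$ and the shape of $\Ann(v_0)$ given by \cite[Lemma 2.6]{polpdiv}, and handles $N_{1/2}$ with \cite[Prop.~4.3]{polpdiv}. This gives $M_\tau\subseteq\Lambda_0$, and equality follows because both lattices have the same duality exponent, hence the same volume.

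You instead combine the necessary condition of Lemma \ref{lem33}(3) with transitivity of $J(\mathbb Q_p)$ on the lattices satisfying it. This proves more: the image of $s$ is exactly that set of lattices. It also avoids the block-by-block analysis and the specific inputs from \cite{polpdiv}. The only work specific to $\Lambda_0$ is the duality check, and that is correct: for $L=\bigoplus_i\mathcal O_D\Pi^{c_i}e_i$ one gets $L^\vee=\bigoplus_i\mathcal O_D\Pi^{-c_{g+1-i}}e_i$. The price is the transitivity statement, which you only say you ``expect''.

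\textbf{First correction: use $F$, not $\Pi$.} A $\tau$-stable Dieudonn\'e lattice is a module over $\mathcal O_D=\mathbb Z_{p^2}[F]$, acting through $F$ on the $\tau$-invariants, and this action commutes with $J(\mathbb Q_p)$. Such a lattice is in general \emph{not} stable under the linear operator $\Pi$, which is a non-central element of $J(\mathbb Q_p)$. For example, take $g=2$ and $u\in\mathbb Z_{p^2}$ with $u\not\equiv\sigma(u)\pmod p$. Let $j\in J(\mathbb Q_p)$ be the identity on $e_1,f_1$, with $e_2\mapsto e_2+up^{-1}f_1$ and $f_2\mapsto f_2+\sigma(u)e_1$. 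Then $j\Lambda_0$ is not $\Pi$-stable.

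So the invariant to use is $\Lambda^\vee=F^{g-1}\Lambda$. For even $g$ this can differ from $\Pi^{g-1}\Lambda$. Moreover, the set of $\Pi$-stable lattices with $\Lambda^\vee=\Pi^{g-1}\Lambda$ is not even $J(\mathbb Q_p)$-stable. For $\Lambda_0$ the two conditions agree, since $F\Lambda_0=\Pi\Lambda_0$.

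\textbf{Second correction: prove transitivity rather than attributing it to \cite{polpdiv}.}
\begin{enumerate}
\item Since $(F\Lambda)^\vee=F^{-1}\Lambda^\vee$, replacing $\Lambda$ by $F^{\lfloor g/2\rfloor}\Lambda$ turns the condition into $\Lambda^\vee=\Lambda$ for $g$ odd, and $\Lambda^\vee=F^{-1}\Lambda$ for $g$ even.
\item These are the vertex lattices of one fixed type for the quaternionic unitary group.
\item They form a single orbit under the isometry group, because that group is simply connected and acts type-preservingly and transitively on chambers of its building. Alternatively, use the Jordan-splitting classification of modular hermitian $\mathcal O_D$-lattices.
\item Work with isometries, so that $jM$ is self-dual on the nose and not only up to a scalar.
\end{enumerate}

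\textbf{On your fallback.} Constructing a generator directly is essentially Proposition \ref{propcondai}. Its hard part is not solving order by order. It is showing that the forced coordinates $a_{i,0}$ for $i>\lceil\tfrac{g+1}{2}\rceil$ keep the whole tuple off every $\mathbb F_{p^2}$-rational hyperplane, which needs a real argument.
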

We rename the generators of $\Lambda_0$ as $\breve{\mathbb{Z}}_p$-module as $X_i,Y_i$ with $X_i=\Pi^{c_i}e_i$ for exponents $c_i$ as in the proposition and with $Y_i=\Pi X_i$. 
\begin{proof}
Denote by $\pr_0,\pr_{1/2},\pr_1$ the projections to the corresponding summands $N_i$ of $\breve N$. Let $M$ be a self-dual Dieudonn\'e lattice in $N$ with $a(M)=1$ and $\pr_{0}(M)_{\tau}=\Lambda_0\cap N_0$. Such a lattice exists by \cite[Section 4]{polpdiv} where it is shown that $\pr_0(M)$ can be chosen to be any given Dieudonn\'e lattice of $a$-invariant 1 in $N_0$. We claim that  $\pr_i(M_{\tau})\subseteq \Lambda_0\cap N_i$ for $i=0,\tfrac{1}{2},1$, and thus that $M_{\tau}\subseteq \Lambda_0$. From the definition of the symplectic pairing on $\breve N$ we obtain that  $\Lambda_0^{\vee}=\Pi^{g-1}\Lambda_0$. By Lemma \ref{lem33}(3), the same holds for $M_{\tau}$. Thus our claim implies the assertion of the proposition.\\

To prove the claim, we first consider the case that $g$ is even. 
For a lattice $L\subset N_0$ let $$L^{\vee}=\{x\in N_1\mid \langle v,x\rangle\in \breve{\mathbb{Z}}_p\text{ for all }v\in L\}.$$ 

For any lattice $L$ in a sub-isocrystal of $\breve N$ let $L^{\tau}=\bigcap_{i\geq 0}\tau^i(L)$ be the largest $\tau$-stable sublattice of $L$. Self-duality of $M$ implies that $M\cap N_1=\pr_0(M)^{\vee}$. Applying powers of $\tau$ yields $(M\cap N_1)^{\tau}\subseteq \tau^i\pr_0(M)^{\vee}$ for all $i$, and thus 
\begin{equation}\label{eqdualpart1}
M^{\tau}\cap N_1=(M\cap N_1)^{\tau}=(\pr_0(M)_{\tau})^{\vee}=\langle e_{g/2+1},\dotsc, e_g\rangle_{\breve{\mathbb{Z}}_p[\Pi]}=\Pi^{g-1}(\Lambda_0\cap N_1).
\end{equation}

By Lemma \ref{lem33}(2) applied to $N_1$, we have $$(M\cap N_1)_{\tau}=\Pi^{-\tfrac{g}{2}+1}(M\cap N_1)^{\tau}=\Pi^{\tfrac{g}{2}}(\Lambda_0\cap N_1).$$

We write $v=v_0+v_1$ with $v_i\in N_i$. Let $\Ann(v_0)=\{\varphi\in\mathcal D\mid \varphi(v_0)=0\}$. Then $M\cap N_1=\Ann(v_0)\cdot v_1$. By \cite[Lemma 2.6]{polpdiv} we have that $\Ann(v_0)$ is a principal ideal in $\mathcal D$ generated by some $\brZ$-linear combination of the $F^aV^b$ with $a+b\geq g/2$ and which is such that the coefficients of $F^{g/2}$ and $V^{g/2}$ are invertible. Altogether we have $$\Pi^{\tfrac{g}{2}}(\Lambda_0\cap N_1)=(M\cap N_1)_{\tau}=(\Ann(v_0)v_1)_{\tau}=\Pi^{\tfrac{g}{2}}\pr_1(M)_\tau,$$ hence $\pr_1(M)_\tau=\Lambda_0\cap N_1$ as claimed.\\

Let now $g$ be odd. To compute $\pr_{1/2}(M_{\tau})$, we use \cite[Prop.~4.3]{polpdiv}. The value of $m$ in that proposition is in our case equal to $\tfrac{g-1}{2}$, and $c=1$. We obtain $$\pr_{1/2}(M_{\tau})^{\vee}=p^{g/2}\pr_{1/2}(M_{\tau}),$$ hence $\pr_{1/2}(M_{\tau})=\Pi^{-\tfrac{g-1}{2}}\langle e_{m}, f_{m}\rangle_{\brZ}=\pr_{1/2}(\Lambda_0)$ is as above. To compute $M_{\tau}\cap N_1$, one proceeds in the same way as for the case that $g$ is even, thus we omit the details.  
\end{proof}

From now on we denote by $\Lambda_0$ the lattice constructed in the proposition. Building on results of Serre, Karemaker--Yu \cite[Lemma 6.16]{KaremakerYu} prove

\begin{lemma}\label{lemmaKY63}
Let $n\geq 1$ and $s\geq 1$ be integers and let $$V_{p,s}=1+\Pi^s{\rm Mat}_n(O_D)\subseteq \GL_n(\mathcal O_D).$$ Then $V_{p,s}$ is torsion free if 
\begin{enumerate}
\item $s\geq 3$ or
\item $p\geq 3$ and $s=2$ or
\item $p\geq 5$ and $s=1$.
\end{enumerate}
\end{lemma}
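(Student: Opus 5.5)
We argue with the $\Pi$-adic valuation and a minimal-polynomial argument. Let $\gamma=1+\Pi^s A\in V_{p,s}$ with $A\in{\rm Mat}_n(\mathcal O_D)$, and suppose $\gamma^\ell=1$ with $\gamma\neq 1$. Replacing $\gamma$ by a suitable power, we may assume $\ell$ is prime. The subgroups $V_{p,s}$ are normal and nested, so a power of an element of $V_{p,s}$ stays in $V_{p,s}$.

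\emph{Reduction to $\ell=p$.} The quotients $V_{p,s}/V_{p,s+1}$ are isomorphic to the additive group ${\rm Mat}_n(\mathbb{F}_{p^2})$, via $1+\Pi^sA\mapsto \bar A$. Together with $\Pi$-adic completeness, this shows that $V_{p,s}$ is a pro-$p$ group. Hence any finite-order element has $p$-power order, and we may take $\ell=p$.

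\emph{Valuation estimate.} Write $\gamma=1+X$ with $X\neq 0$, and let $t\geq s$ be maximal with $X\in\Pi^t{\rm Mat}_n(\mathcal O_D)$. Expanding gives
\[0=\gamma^p-1=pX+\binom p2X^2+\dotsb+\binom{p}{p-1}X^{p-1}+X^p.\]
Here $pX$ has exact $\Pi$-valuation $t+2$, since $p=\Pi^2$ up to a unit. The middle terms have valuation at least $2+2t>t+2$. The last term $X^p$ has valuation at least $pt$.

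When $pt>t+2$, the term $pX$ alone has lowest valuation, so the sum cannot vanish, a contradiction. The inequality $pt>t+2$ means $(p-1)t>2$. This holds in each of the three cases:
\begin{enumerate}
\item $s\geq3$, where $t\geq3$ and $p\geq 2$;
\item $p\geq3$ and $s=2$, where $(p-1)t\geq 4$;
\item $p\geq5$ and $s=1$, where $(p-1)t\geq 4$.
\end{enumerate}

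\emph{Main obstacle.} The delicate point is that "exact valuation" must be read in the noncommutative ring ${\rm Mat}_n(\mathcal O_D)$. I would define the valuation of a matrix as the minimum of the entry valuations. This is only submultiplicative, so it gives lower bounds for $X^p$. It is exactly multiplicative when one factor is the central-up-to-unit scalar $p$, which is all we need for $pX$.

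Writing $X=\Pi^tY$ with $Y\not\equiv0$ mod $\Pi$, the conclusion is clean: $pX=\Pi^{t+2}Y'$ with $Y'\not\equiv0$ mod $\Pi$, because conjugation by $\Pi$ preserves the property of being nonzero mod $\Pi$. Every other term lies in $\Pi^{t+3}{\rm Mat}_n(\mathcal O_D)$. Reducing modulo $\Pi^{t+3}$ then yields the contradiction, so $V_{p,s}$ is torsion free.
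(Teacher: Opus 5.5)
Your proof is correct, and it is essentially the standard Serre valuation argument: $1+\pi^m\mathrm{Mat}_n(\mathcal O)$ is torsion free when $m>e/(p-1)$, here with $e=2$ because $p=\Pi^2$, and your condition $(p-1)t>2$ reproduces exactly the three cases of the lemma. The paper does not prove the lemma itself but cites Karemaker--Yu, who build on Serre, so your write-up is a self-contained version of the cited argument; running it directly in $\mathrm{Mat}_n(\mathcal O_D)$ is legitimate because $p$ is central and the entrywise $\Pi$-adic valuation is submultiplicative.
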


\begin{rem}\label{rem38}
Let again $M\subset \breve N$ be the Dieudonn\'e lattice corresponding to some $x\in  \mathcal C_{\Lambda_0}(\overline{\mathbb{F}}_p)$. Summarizing the above discussion we have $$\Aut (M,F,\langle\cdot,\cdot\rangle)\subseteq \Aut (M_{\tau},F,\langle\cdot,\cdot\rangle)\cong  \GSp_g(\mathcal O_D)\subseteq \GL_g(\mathcal O_D).$$
 
Thus, Theorem \ref{thmmain} holds for some fixed $p$ and $g$ if generically on $\mathcal C_{\Lambda_0}$, we have 
\begin{equation}\label{eqintrolast}
\Aut(M,F,\langle\cdot,\cdot\rangle)\subseteq \pm 1\cdot (V_{p,s}\cap J(\mathbb Q_p))
\end{equation}
for some $s$ satisfying the assumption of Lemma \ref{lemmaKY63}.

Conversely, Lemma \ref{lem33} implies that $\pm 1\cdot (V_{p,g-1}\cap J(\mathbb Q_p))\subseteq \Aut(M,F,\langle\cdot,\cdot\rangle)$ for every such $M$.
\end{rem}

\begin{lemma}\label{propopen}
Let $Y$ be the subset of $\mathcal C_{\Lambda_0}$ defined by the condition that for any perfect field $k$ over $\mathbb F_p$, a $k$-valued point lies in $Y$ if and only if $\pm 1$ are the only automorphisms of finite order of the associated triple $(M,F,\langle\cdot,\cdot\rangle)$. Then $Y$ is the underlying subset of an open subscheme of $\mathcal C_{\Lambda_0}$.
\end{lemma}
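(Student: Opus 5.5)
The plan is to write the complement of $Y$ as a finite union of closed subsets indexed by torsion elements of a fixed compact group. By Corollary \ref{corMtau} and Remark \ref{rem38}, every automorphism of a point $M$ of $\mathcal C_{\Lambda_0}$ lies in the stabilizer $K:=\Aut(\Lambda_0,F,\langle\cdot,\cdot\rangle)\subseteq J(\mathbb Q_p)$. This stabilizer is a compact open subgroup, identified with a subgroup of $\GSp_g(\mathcal O_D)$, and it does not depend on the point. Conversely, by Lemma \ref{lem33}(2),(3) every such $M$ contains $F^{g-1}\Lambda_0=\Pi^{g-1}\Lambda_0$. Hence whether an element $j\in K$ stabilizes $M$ depends only on the image of $j$ in the finite quotient
\[
\bar K:=K/(K\cap(1+\Pi^{g-1}{\rm Mat}_g(\mathcal O_D))).
\]
This quotient acts on the finite-dimensional $\overline{\mathbb F}_p$-vector space $\Lambda_0/\Pi^{g-1}\Lambda_0$, and the points of $\mathcal C_{\Lambda_0}$ correspond to certain subspaces $M/\Pi^{g-1}\Lambda_0$ of it.

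The first step is to reduce to finitely many elements. Let $T\subseteq K$ be the set of torsion elements other than $\pm1$. A point lies outside $Y$ exactly when some $j\in T$ satisfies $jM=M$. I will choose $s$ as in Lemma \ref{lemmaKY63}, say $s=3$, so that $V_{p,s}$ is torsion free; replacing $g-1$ by $\max(g-1,s)$ changes nothing essential. Each coset $tV'$ of $V':=K\cap V_{p,\max(g-1,s)}$ then contains at most finitely many torsion elements. I expect this to follow because a torsion element of finite order $n$ in a coset of the torsion-free pro-$p$ group $V'$ is determined up to finitely many choices; the simplest route is to avoid the question entirely. Since stabilizing $M$ depends only on the coset $jV'$, a point lies outside $Y$ iff there is a coset $c\in K/V'$ that contains an element of $T$ and stabilizes $M$. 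The set of such cosets is a finite set $\bar T\subseteq K/V'$, independent of the point.

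The second step shows that for a fixed $c\in\bar T$, the locus $Z_c$ of points $M$ with $cM=M$ is closed. The family of lattices over $\mathcal C_{\Lambda_0}$ is given by a family of subbundles of the constant bundle $\Lambda_0/\Pi^{\max(g-1,s)}\Lambda_0\otimes\mathcal O$. It is realised through the universal Dieudonn\'e module, or via displays as in \cite{modpdiv}, or simply as a morphism to a Grassmannian, which is available because $\mathcal C_{\Lambda_0}$ embeds in an affine Deligne--Lusztig variety. The condition that a fixed linear automorphism $c$ preserves a subbundle is closed: it is the vanishing of the composite $\mathcal M\to c^{-1}(\mathcal Q)$, i.e.\ the incidence locus $\{W: cW\subseteq W\}$ is closed in the Grassmannian. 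Therefore $\mathcal C_{\Lambda_0}\setminus Y=\bigcup_{c\in\bar T}Z_c$ is a finite union of closed subsets, so $Y$ is open. Compatibility with points over arbitrary perfect fields $k$ follows because everything is defined over $\mathbb F_p$ and the torsion elements are $\mathbb Z_p$-rational.

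The main obstacle is making the second step precise scheme-theoretically. One must know that the family of Dieudonn\'e lattices over $\mathcal C_{\Lambda_0}$, which is not necessarily reduced, sits functorially between $\Pi^{g-1}\Lambda_0$ and $\Lambda_0$ as a family of subbundles. I would handle this by using the universal quasi-isogenies $\rho$: by Lemma \ref{lem33} and the definition of $\mathcal C_{\Lambda_0}$, these give isogenies $X_{\Lambda_0}\to X\to X_{\Pi^{1-g}\Lambda_0}$ of bounded height. The subgroup-scheme description of $X$ as a quotient of a fixed group then turns the stabilizer condition into the condition that a finite subgroup scheme is preserved by an automorphism, and this condition is closed.
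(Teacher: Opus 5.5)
Your proof is correct. It differs from the paper's proof in both steps, and in the first step it is more careful than the paper.

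\textbf{Reduction to finitely many conditions.} The paper asserts that $\GSp_g(\mathcal O_D)$ has only finitely many torsion elements, because $\GSp_g(\mathcal O_D)\cap V_{p,3}$ is torsion free of finite index. It then removes finitely many fixed-point loci. That inference is invalid, and its conclusion is false.
\begin{itemize}
\item For torsion $t$ and an open normal subgroup $V'$, every conjugate $vtv^{-1}$ with $v\in V'$ is torsion and lies in $tV'$.
\item These conjugates form an infinite set unless $t$ centralizes an open subgroup of $K$.
\item Open subgroups are Zariski dense in $J$, whose centre is $\mathbb Q_p^\times$. So centralizing an open subgroup forces $t=\pm 1$.
\end{itemize}
For the same reason, your aside that each coset $tV'$ contains only finitely many torsion elements is wrong and should be deleted; you rightly do not use it.

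What you use instead is exactly what is needed to get a finite union, and it repairs the paper's argument. Since $\Pi^{g-1}\Lambda_0\subseteq M$, whether $jM=M$ depends only on the image of $j$ in the finite group $K/(K\cap(1+\Pi^{g-1}{\rm Mat}_g(\mathcal O_D)))$. This is also implicit in the last sentence of Remark \ref{rem38}.

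\textbf{Closedness.} Here the paper's route is simpler than yours. An element $j\in K$ stabilizes $\Lambda_0$, so by equivariance of $s$ it acts on $\mathcal C_{\Lambda_0}$ through the $J(\mathbb Q_p)$-action on $\mathcal M_g$. Its fixed-point locus, whose geometric points are the $M$ with $jM=M$, is closed because $\mathcal C_{\Lambda_0}$ is separated. So the ``main obstacle'' you identify never arises, and no universal family of lattices is needed.

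If you keep your approach, there is a slip in the set-up. For $g\geq 4$ the quotient $\Lambda_0/\Pi^{g-1}\Lambda_0$ is not killed by $p$. It is therefore a $W_n(\overline{\mathbb F}_p)$-module, not an $\overline{\mathbb F}_p$-vector space. The correct parameter space is then a Witt vector affine Grassmannian, not an ordinary Grassmannian of subbundles. Your final formulation avoids this and does work: it uses finite locally free subgroup schemes of a fixed $p$-divisible group, with the closed condition that a fixed automorphism preserves them.
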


\begin{proof}
By the previous remark, the possible automorphisms of such triples $(M,F,\langle\cdot,\cdot\rangle)$ are elements of $\GSp_g(\mathcal O_D)$. Since $\GSp_g(\mathcal O_D)\cap V_{p,3}$ is torsion free and $\GSp_g(\mathcal O_D)/\GSp_g(\mathcal O_D)\cap V_{p,3}$ is finite, the group $\GSp_g(\mathcal O_D)$ only contains finitely many torsion elements. Let $j_1,\dotsc,j_n\in \GSp_g(\mathcal O_D)$ be the torsion elements different from $\pm 1$. Then each $j_i$ induces an automorphism of $\mathcal C_{\Lambda_0}$, and $Y$ is the complement of the union of the fixed points of all $j_i$, hence open. 
\end{proof}

Thus it remains to show that $Y$ as above is non-empty, or in other words that the fixed point locus of each $j_i$ is a proper closed subscheme of $\mathcal{C}_{\Lambda_0}$. In the remainder of this section we study the condition that a given $M=\mathcal D v$ with $M_{\tau}=\Lambda_0$ is also self-dual.

\begin{lemma}
Let $M\subset N$ be a Dieudonn\'e lattice with $a(M)=1$ and $M_{\tau}=\Lambda_0$. Assume that $M^{\vee}\supseteq M$. Then $M$ is self-dual.
\end{lemma}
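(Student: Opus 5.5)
The plan is to compare indices (volumes) of lattices. Since $M\subseteq M^{\vee}$, it suffices to show that $[M^{\vee}:M]$ has length $0$, i.e.\ that $M$ and $M^{\vee}$ have the same volume relative to a fixed reference lattice. I will compute both volumes relative to $\Lambda_0$ and its dual $\Lambda_0^{\vee}=\Pi^{g-1}\Lambda_0=F^{g-1}\Lambda_0$. This identity was observed in the proof of Proposition \ref{proplambda0}, directly from the definition of the pairing and the shape of $\Lambda_0$.

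First, I will compute the length of $\Lambda_0/M$ using only Lemma \ref{lem33}(1) and (2), which do not require self-duality. By (2), $F^{g-1}\Lambda_0\subseteq M\subseteq\Lambda_0$. By (1), for $0\le j\le g-1$ the graded piece $(M\cap F^j\Lambda_0)/(M\cap F^{j+1}\Lambda_0)$ has dimension $j+1$ inside the $g$-dimensional space $F^j\Lambda_0/F^{j+1}\Lambda_0$. Summing the codimensions $g-(j+1)$ over $j=0,\dots,g-2$ gives
\[
\operatorname{length}(\Lambda_0/M)=\sum_{j=0}^{g-2}(g-1-j)=\tfrac{g(g-1)}{2}.
\]
Since $\Lambda_0/F^{g-1}\Lambda_0$ has length $g(g-1)$, it follows that $\operatorname{length}(M/F^{g-1}\Lambda_0)=\tfrac{g(g-1)}{2}$ as well. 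The piece $j=g-1$ contributes nothing here, because it is full and coincides with $F^{g-1}\Lambda_0$.

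Next, I will dualize. Duality reverses inclusions and preserves indices, so from $\Lambda_0^{\vee}\subseteq M\subseteq \Lambda_0$ we get $\Lambda_0^{\vee}\subseteq M^{\vee}\subseteq (\Lambda_0^{\vee})^{\vee}=\Lambda_0$. Moreover, $[\Lambda_0:M^{\vee}]=[M:\Lambda_0^{\vee}]=\tfrac{g(g-1)}{2}$. Hence $M\subseteq M^{\vee}\subseteq\Lambda_0$ with $[\Lambda_0:M]=[\Lambda_0:M^{\vee}]$, which forces $M=M^{\vee}$.

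The main point needing care is the index identity $[\Lambda_0:M^{\vee}]=[M:\Lambda_0^{\vee}]$ for duals under a perfect pairing on $\breve N$. It follows from $L^{\vee}/L'^{\vee}\cong\operatorname{Hom}(L'/L,\breve{\mathbb Q}_p/\breve{\mathbb Z}_p)$ for $L\subseteq L'$, and finite-length modules over $\breve{\mathbb Z}_p$ have the same length as their Pontryagin-type duals. One should also check that Lemma \ref{lem33}(1),(2) apply. This holds because $M_\tau=\Lambda_0$ and $a(M)=1$ give a generator $v$ whose leading coordinates satisfy Lemma \ref{lemailinindep} by \cite[Lemma 4.8]{modpdiv}.
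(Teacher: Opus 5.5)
Your proof is correct and is essentially the paper's argument. Both use Lemma \ref{lem33}(1),(2) to show that $M$ has colength $\tfrac{g(g-1)}{2}$ in $\Lambda_0$, and then conclude from $M\subseteq M^{\vee}$ by comparing volumes. The only difference is bookkeeping: the paper normalizes volumes against the self-dual lattice $\langle e_i,f_i\rangle$, whereas you use the sandwich $\Lambda_0^{\vee}=\Pi^{g-1}\Lambda_0\subseteq M\subseteq\Lambda_0$ together with $[\Lambda_0:M^{\vee}]=[M:\Lambda_0^{\vee}]$, which makes the equality $\vol M^{\vee}=\vol M$ a bit more explicit.
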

\begin{proof}
Let $M$ be as in the lemma and let $v$ be a generator as Dieudonn\'e module. By Lemma \ref{lem33}(1) $(M\cap \Pi^j\Lambda_0)/(M\cap \Pi^{j+1}\Lambda_0)$ is a sub-vector space of the $g$-dimensional $\overline{\mathbb{F}}_p$-vector space $\Pi^j\Lambda_0/\Pi^{j+1}\Lambda_0$ generated by the elements $F^lV^mv$ with $l+m=j$, a $\min\{j+1,g\}$-dimensional sub-vector space. In particular, the volume of $M$ is equal to $\vol\Lambda_0-\sum_{j=0}^{g-2}(g-1-j)=\vol\Lambda_0-\frac{g(g-1)}{2}=\vol \langle e_i,f_i\mid i=1,\dotsc,g\rangle=\vol \langle e_i,f_i\mid i=1,\dotsc,g\rangle^{\vee}=0$. Thus $\vol M=\vol M^{\vee}$, and $M^{\vee}\supseteq M$ implies that the two lattices are equal.
\end{proof}
\begin{rem}\label{reminitcond}
Assume that $M=\mathcal D v$. Then as in \cite[Rem.~4.1]{polpdiv} the condition $M^{\vee}\supseteq M$ is equivalent to $$\langle v, F^jv\rangle,\langle v, V^jv\rangle\in \breve{\mathbb{Z}}_p $$ for all $j\geq 0$. For $j=0$, this pairing is $0$. Furthermore,
$$\langle v, V^jv\rangle=\langle F^jv, v\rangle^{\sigma^{-j}}=- \langle v,F^jv\rangle^{\sigma^{-j}},$$
thus the conditions for the $F^j$ imply those for the $V^j$. Also note that for $j\geq g-1$ we have $F^j v\in \Pi^{g-1}\Lambda_0=\Lambda_0^{\vee}$ which implies that $\langle v, F^jv\rangle  \in \breve{\mathbb{Z}}_p $ for all $v\in \Lambda_0$. 

As in \eqref{eqcoordv} we write
\begin{equation}\label{eqcoordvpi}
v=\sum_{i=1}^g \sum_{l=0}^{\infty} [a_{i,l}]\Pi^l X_i.
\end{equation}

Then 
\begin{align}\label{eqpairingincoord}
\nonumber\langle v,F^j v\rangle=& \sum_{i=1}^g\sum_{l,l'\geq 0} [a_{i,l}a_{g+1-i,l'}^{\sigma^j}]\langle \Pi^l X_i,\Pi^{l'+j} X_{g+1-i}\rangle\\
=& \sum_{i=1}^g\sum_{l,l'\geq 0} (-1)^l[a_{i,l}a_{g+1-i,l'}^{\sigma^j}]\langle X_i,\Pi^{l+l'+j} X_{g+1-i}\rangle.
\end{align}
Let us compute $\langle X_i,\Pi^{j} X_{g+1-i}\rangle$ for all values of $g,i,j$.
\begin{itemize}
\item If $i\leq g/2$, then $X_i=e_i$ and $X_{g+1-i}=\Pi^{-g+1}e_{g+1-i}$. Hence the pairing is $p^{(j-g)/2}$ if $j-g$ is even, and $0$ otherwise. 
\item If $i=m=(g+1)/2$ (and $g$ is odd), the pairing is $$\langle\Pi^{-(g-1)/2} e_m,\Pi^{-(g-1)/2+ j} e_m\rangle=(-1)^{(g-1)/2}p^{-(g-1)/2+(j-1)/2}$$ if the exponent is an integer, and $0$ otherwise.
\item If $i>(g+1)/2$, then  
$$\langle X_i,\Pi^{j} X_{g+1-i}\rangle=\langle\Pi^{-g+1} X_i,\Pi^{j} X_{g+1-i}\rangle=(-1)^{g-1}p^{(j-g)/2} $$ if the exponent is an integer, and $0$ otherwise.
\end{itemize}
\end{rem}

Our next step is to analyse the set of solutions of the resulting system of equations for the $a_{i,l}$. We first consider a system of equations that will occur several times in the proof.

\begin{prop}\label{propSysEq}
Let $n=\lfloor \tfrac{g-1}{2}\rfloor$ and let $a_{1,0},\dotsc, a_{n ,0}\in \overline{\mathbb{F}}_p$ be not contained in any $\mathbb{F}_{p^2}$-rational hyperplane. We consider the system of $n$ equations
$$\sum_{i=1}^n ((-1)^{g-1}a_{i,0}^{\sigma^j}z_i+a_{i,0}z_i^{\sigma^j})-d_j=0$$
 where $j$ runs through all even (resp.~all odd) integers with  $1\leq j\leq g-1$ and where the $d_j$ are fixed elements of $\overline{\mathbb{F}}_p$.
 This system of equations has finitely many solutions. More precisely, the number of solutions is equal to $\prod_j p^j$ where the indexing set is the same as above. 
\end{prop}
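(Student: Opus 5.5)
The plan is to treat the system as a polynomial map $\Phi=(\Phi_j)_j:\mathbb{A}^n\to\mathbb{A}^n$ over $\overline{\mathbb{F}}_p$ and to count the points of the fibre $\Phi^{-1}(d)$ by B\'ezout's theorem. Here
$$\Phi_j(z)=\sum_{i=1}^n \bigl((-1)^{g-1}a_{i,0}^{\sigma^j}z_i+a_{i,0}z_i^{p^j}\bigr),$$
and $j$ runs through the given index set $I$. I first check that $I$ has exactly $n$ elements, so that the system is square. For $g$ odd, both parities give $(g-1)/2=n$ values. For $g$ even, the statement should be read with the parity for which $|I|=n$; I would make this explicit in the proof. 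In every case the elements of $I$ form an arithmetic progression with step $2$, say $j=j_0+2k$ with $k=0,\dotsc,n-1$. This is exactly the shape needed to apply Lemma \ref{lemailinindep}, using the hypothesis that $[a_{1,0}:\dotsm:a_{n,0}]$ lies in no $\mathbb{F}_{p^2}$-rational hyperplane.

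\emph{Step 1: no solutions at infinity.} Equation $j$ has degree $p^j$, and its top-degree homogeneous part is $\sum_i a_{i,0}z_i^{p^j}$. A common zero of these leading forms in $\overline{\mathbb{F}}_p^n$ gives, after applying $\sigma^{-j}$ to the $j$-th equation, a vector $z$ with
$$\sum_i a_{i,0}^{\sigma^{-j}}z_i=0\quad\text{for all } j\in I.$$
The coefficient matrix is $(\sigma^{-j_0-2k}(a_{i,0}))_{i,k}$, which is of the form $(\sigma^{2k'-l}(a_{i,0}))$ after reindexing $k'=-k$ and shifting. By Lemma \ref{lemailinindep} it is invertible, so $z=0$. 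Hence the homogenized system has no zeros on the hyperplane at infinity. B\'ezout's theorem in $\mathbb{P}^n$ then says that the system has finitely many solutions, and that counted with multiplicity there are exactly $\prod_{j\in I}p^j$ of them.

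\emph{Step 2: all solutions are simple.} Since $\partial z_i^{p^j}/\partial z_i=0$ in characteristic $p$, the Jacobian matrix of $\Phi$ is constant and equal to $\bigl((-1)^{g-1}a_{i,0}^{\sigma^j}\bigr)_{j\in I,\,i}$. Up to the sign this is $(\sigma^{j_0+2k}(a_{i,0}))$, which is invertible by Lemma \ref{lemailinindep} again. So every solution is a reduced, transversal intersection point with multiplicity $1$. The number of distinct solutions is therefore exactly $\prod_{j\in I}p^j$, independently of the $d_j$.

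As a consistency check one can also use additivity. $\Phi$ is a $p$-polynomial, hence additive, so each nonempty fibre is a coset of $\ker\Phi$. Thus the count must equal $|\ker\Phi|$, which is independent of $d$. The main care point is Step 1: matching the twisted exponents $\sigma^{\pm j}$ with the precise form of Lemma \ref{lemailinindep}, namely consecutive even twists up to a fixed shift. The other step that needs attention is pinning down the index set so that the system is genuinely $n\times n$ for both parities of $g$. Once these are in place, B\'ezout together with the constant invertible Jacobian gives the statement.
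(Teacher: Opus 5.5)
Your proposal is correct and follows the paper's proof: the constant Jacobian, invertible by Lemma \ref{lemailinindep}, gives multiplicity one; Bézout gives the count; and there are no solutions at infinity because untwisting the leading forms reduces to the same lemma (you apply $\sigma^{-j}$ where the paper applies $\sigma^{g-1-j}$, which makes no difference). You are also right that for even $g$ only the even $j$ give a square $n\times n$ system. The paper leaves this implicit, but its application in Proposition \ref{propcondai} uses exactly $j\equiv g \pmod 2$.
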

\begin{proof}
We consider the map $\Phi:\mathbb{A}^n\rightarrow \mathbb{A}^n$ mapping $(z_1,\dotsc, z_n)$ to the tuple consisting of the left hand sides of the above equations. The Jacobian matrix of this is constant and up to sign equal to the matrix $(a_{i,0}^{\sigma^j})_{i,j}$. By Lemma \ref{lemailinindep}, this matrix is invertible. Thus $\Phi$ is \'etale, in particular, the system of equations has only finitely many solutions. By B\'ezout's theorem, the associated homogenous system of equations (using an additional variable $z$ to homogenize) has the claimed number of solutions, each with multiplicity 1. It remains to show that there is no solution satisfying $z=0$. Assume that we have such a solution $[z_1:\dotsm: z_n:0]$. Then the $z_i$ satisfy $$\sum_i a_{i,0}z_i^{\sigma^j}=0$$ for all $j$ as above. Applying $\sigma^{g-1-j}$ to the $j$th equation, this implies that $$\sum_i a_{i,0}^{\sigma^{g-1-j}}z_i^{\sigma^{g-1}}=0$$ for all $j$. The matrix  $(a_{i,0}^{\sigma^{g-1-j}})_{i,j}$ is invertible by Lemma \ref{lemailinindep}. Hence $z_i^{\sigma^{g-1}}=0$ for all $i$, and thus all $z_i$ vanish, a contradiction.
\end{proof}
\begin{rem}\label{remexplsol}
 If $(z_i)$ and $(z'_i)$ are two solutions of the above system of equations, then their difference is a solution of the associated system of equations with all $d_j=0$. We need an explicit parametrization of the set of solutions of this system with vanishing constant coefficients for the case that $j\equiv g\pmod{2}$. For all $i< l\leq n$ let $b_{i,l}\in \mathbb F_{p^2}$ and for all $i$ let 
$$b_{i,i}\in \begin{cases}
\varepsilon\mathbb{F}_p&\text{for odd }g\\
\mathbb{F}_{p^2}&\text{for even }g
\end{cases}$$
where $\varepsilon\in\mathbb{F}_{p^2}$ with $\varepsilon^{\sigma}=-\varepsilon$. We associate with any such choice an $n$-tuple $(x_i)$ by setting 
$$x_i=b_{ii}a_{i,0}+\sum_{l>i} b_{i,l}a_{l,0}+(-1)^g\sum_{l<i}b_{l,i}^{\sigma^j}a_{l,0}.$$
Since the coordinates are $\mathbb{F}_{p^2}$-linear combinations of the $a_{i,0}$, these $n$-tuples are mutually distinct and give as many points of $\mathbb{A}^n$ as the number of solutions of the system of equations. One easily checks that they satisfy the equations (with vanishing $d_j$) and thus all solutions are of this form.

Phrased more conceptually, this description is part of the assertion that the set of solutions is in one orbit under $J(\mathbb{Z}_p)$.
\end{rem}

\begin{prop}\label{propcondai}
The set 
$$\{v\in\breve N\mid (\mathcal D v)_{\tau}=\Lambda_0, (\mathcal D v)^{\vee}=\mathcal D v\}$$ agrees with the set of elements $v\in \Lambda_0$ obtained in the following way. Writing $v$ uniquely as a linear combination of the basis we obtain $$v=\sum_{i=1}^g\sum_{l\geq 0} [a_{i,l}]\Pi^l X_i$$ with $a_{i,l}\in \overline{\mathbb{F}}_p.$ Then 
\begin{enumerate}
\item The $a_{i,l}$ with $i\leq \lceil \tfrac{g+1}{2}\rceil$ can be chosen arbitrarily under the condition that the $a_{i,0}$ are linearly independent over $\mathbb{F}_{p^2}$.
\item The $a_{i,l}$ for $i> \lceil \tfrac{g+1}{2}\rceil$ and $l\geq g-1$ can also be chosen arbitrarily. 
\item The $a_{i,0}$ for $i> \lceil \tfrac{g+1}{2}\rceil$ are defined as $a_{i,0}:=z_{g+1-i}$ where $(z_1,\dotsc, z_{\lfloor \tfrac{g-1}{2}\rfloor})$ is any solution of the system of equations
\begin{equation}
\sum_{i=1}^{\lfloor \tfrac{g-1}{2}\rfloor}((-1)^{g-1}z_ia_{i,0}^{\sigma^{j}}+ z_i^{\sigma^j}a_{i,0})=d_{j,0}
\end{equation}
for $1\leq j< g-1$ and $j\equiv g\pmod{2}$. Here, 
$$d_{j,0}=\begin{cases}
-a_{g/2,0}a_{g/2+1,0}^{\sigma^j}+a_{g/2,0}^{\sigma^j}a_{g/2+1,0}&\text{if }g\text{ is even,}\\
(-1)^{m}a_{m,0}a_{m,0}^{\sigma^j}&\text{if }g\text{ is odd.}
\end{cases}$$
\item The $a_{i,l}$ for $i> \lceil \tfrac{g+1}{2}\rceil$ and $1\leq l<g-1$ are chosen iteratively for increasing $l$. For some fixed $0\leq l\leq g-2$ and $i>\lceil \tfrac{g+1}{2}\rceil$ let $a_{i,l}:=z_{g+1-i}$ where $(z_1,\dotsc, z_{\lfloor(g-1)/2\rfloor})$ is any solution of the system of equations of the form
\begin{equation}
\sum_{i=1}^{\lfloor \tfrac{g-1}{2}\rfloor}((-1)^{g-1}z_ia_{i,0}^{\sigma^{j}}+ z_i^{\sigma^j}a_{i,0})=d_{j,l}
\end{equation}
for $1\leq j< g-1-l$ and $j\equiv g+l\pmod{2}$. Here, the $j$-th such equation is obtained by reducing the expression \eqref{eqpairingincoord},
which is by our induction on $l$  contained in $p^{(-g+1+j+l)/2}\brZ$, modulo $p^{(-g+1+j+l)/2+1}\brZ$. 
\end{enumerate}
In particular, the linear independence of the $a_{i,0}$ for $i\leq \lceil \tfrac{g+1}{2}\rceil$ implies linear independence over $\mathbb{F}_{p^2}$ of all $a_{i,0}$.
\end{prop}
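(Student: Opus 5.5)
The plan is to translate the two defining conditions into conditions on the coordinates $a_{i,l}$ and then solve them by induction on the $\Pi$-adic level. By Lemma \ref{lem33}(2),(3), any $v$ in the set lies in $M\subseteq\Lambda_0$, so the expansion \eqref{eqcoordvpi} exists. By \cite[Lemma 4.8]{modpdiv} together with Lemma \ref{lemailinindep}, the condition $(\mathcal D v)_\tau=\Lambda_0$ is equivalent to the linear independence over $\mathbb F_{p^2}$ of all the $a_{i,0}$. By the preceding lemma and Remark \ref{reminitcond}, self-duality is then equivalent to
$$\langle v,F^jv\rangle\in\brZ\quad\text{for }1\leq j\leq g-2.$$
So the proof reduces to analysing these $g-2$ integrality conditions in the coordinates via \eqref{eqpairingincoord}, using the table of values $\langle X_i,\Pi^kX_{g+1-i}\rangle$ computed there.

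\emph{Step 1: valuations.} The term indexed by $(i,l,l')$ in \eqref{eqpairingincoord} has valuation at least $(l+l'+j-g)/2$ for $i\neq m$ (with the parity restriction), and a similar bound with a half-integer shift for $i=m$. Hence the coefficients $a_{i,l}$ with $l\geq g-1$ never affect integrality, since then $\Pi^lX_i\in\Lambda_0^\vee$; this gives (2). Terms with $l+l'$ large are integral as well.

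\emph{Step 2: the top layer.} Fix $j\equiv g\pmod 2$ and consider $\langle v,F^jv\rangle$ modulo $p^{(-g+1+j)/2+1}$, i.e.\ only the terms with $l=l'=0$ at the lowest power of $p$. Pair index $i$ with $g+1-i$ and write $z_i=a_{g+1-i,0}$ for $i\leq\lfloor (g-1)/2\rfloor$. The signs from the table give exactly
$$\sum_i\bigl((-1)^{g-1}z_ia_{i,0}^{\sigma^j}+z_i^{\sigma^j}a_{i,0}\bigr),$$
while the middle terms ($i=g/2,g/2+1$ for even $g$, $i=m$ for odd $g$) contribute $-d_{j,0}$. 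Teichmüller multiplicativity gives $[a][b]\equiv[ab]$, and sums of Teichmüller lifts reduce correctly modulo $p$ at the leading order, so vanishing of the leading coefficient is exactly the equation in (3). For $j\not\equiv g$ the lowest-order term vanishes by parity, and its condition is absorbed into the next level.

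\emph{Step 3: induction on $l$.} Assume $a_{\cdot,l''}$ have been fixed for $l''<l$ so that all pairings already lie in $p^{(-g+1+j+l)/2}\brZ$. Reducing modulo one higher power of $p$, the new unknowns $a_{g+1-i,l}$ enter linearly through the terms $(l,0)$ and $(0,l)$ against $a_{i,0}$. The sign $(-1)^l$ and the $\sigma^j$ twist reproduce the same operator as in Proposition \ref{propSysEq}. All remaining contributions, from lower-level coordinates, carries in Witt addition, and the freely chosen $a_{i,l}$ with $i\leq\lceil (g+1)/2\rceil$, are collected into $d_{j,l}$; this gives (4). The relevant $j$ satisfy $1\leq j<g-1-l$ with the stated parity, since for larger $j$ the pairing is already integral at that level. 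Proposition \ref{propSysEq} guarantees that these systems are solvable, with finitely many solutions. The converse direction follows because all integrality conditions are then met level by level.

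\emph{Step 4: final assertion.} The solutions $z$ in (3), as in Remark \ref{remexplsol}, differ from a particular solution by $\mathbb F_{p^2}$-combinations of the $a_{i,0}$, $i\leq n$. I still need to argue that this keeps $\{a_{i,0}\}_{i\leq g}$ linearly independent. I would get this from the already-established equivalence with $\mathcal D v$ being a lattice with $M_\tau=\Lambda_0$, or directly from the Jacobian invertibility.

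The main obstacle is the bookkeeping in Step 3: one must show that at each level the new unknowns appear only linearly and exactly through the operator of Proposition \ref{propSysEq}. One must also show that mixed terms and Witt-vector carries involve only previously fixed data. I would handle this by a careful valuation count for each pair $(l,l')$, using the integer/half-integer exponent table.
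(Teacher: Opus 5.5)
Your Steps 1--3 follow the first half of the paper's proof. You isolate, level by level in \eqref{eqpairingincoord}, the terms in which the new unknowns $a_{g+1-i,l}$ meet $a_{i,0}$, recognize the operator of Proposition \ref{propSysEq}, and push everything else into $d_{j,l}$. There is one small slip there. For $l\geq 1$ there are fewer equations than unknowns, so the solution sets are infinite, and Proposition \ref{propSysEq} gives solvability only after padding the system with extra equations.

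The genuine gap is Step 4, which is what the paper itself calls the main assertion of the proposition. Condition (1) only asks for $\mathbb{F}_{p^2}$-independence of $a_{1,0},\dots,a_{\lceil (g+1)/2\rceil,0}$. But $(\mathcal D v)_\tau=\Lambda_0$ needs independence of all $g$ coordinates. So the inclusion of the constructed set in the target set depends on proving that no solution $z$ of (3) creates a dependence.

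Neither of your two suggested routes does this.
\begin{enumerate}
\item The first is circular. You may invoke the equivalence with $(\mathcal D v)_\tau=\Lambda_0$ only once full independence is already known.
\item The second does not address the question. The Jacobian $(a_{i,0}^{\sigma^j})_{i\leq n}$ involves only the given coordinates. It yields finiteness of the solution set, and nothing about whether a solution lies in their $\mathbb{F}_{p^2}$-span.
\end{enumerate}
In fact no argument using only the left-hand operator can work. If the $d_{j,0}$ vanished, $z=0$ would be a solution violating the conclusion. You must exploit that $d_{j,0}$ is built from the middle coordinates ($a_{g/2,0},a_{g/2+1,0}$, resp.\ $a_{m,0}$), which are independent of the others by (1).

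The paper argues by contradiction. After permuting indices, it assumes $a_{g,0}=\sum_{i<g}\delta_i a_{i,0}$ with $\delta_i\in\mathbb{F}_{p^2}$. It then subtracts $\delta_i a_{i,0}$ from $a_{g,0}$ while adding a compensating $\mathbb{F}_{p^2}$-multiple of $a_{1,0}$ to $a_{g+1-i,0}$. This preserves the equations and condition (1), and reduces to $a_{g,0}\in\mathbb{F}_{p^2}a_{1,0}$ for even $g$, resp.\ $a_{g,0}=\delta_1a_{1,0}+\delta_m a_{m,0}$ for odd $g$.
\begin{itemize}
\item For even $g$, Remark \ref{remexplsol} first allows $a_{g,0}=0$. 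Applied again to the remaining homogeneous system, with $a_{g/2+1,0}$ treated as an unknown, it puts $a_{g/2+1,0}$ in the span of $a_{1,0},\dots,a_{g/2,0}$, contradicting (1).
\item For odd $g$, one completes the square and rewrites the system as $\{\bar v,\sigma^j\bar v\}=0$ for a symmetric $\sigma$-compatible form. One then applies $\Phi_{m-1}$, an $\overline{\mathbb{F}}_p$-combination of $1,\sigma^2,\dots,\sigma^{2(m-2)}$ killing the span of $a_{2,0},\dots,a_{m-1,0}$. This gives $b_1b_1^{\sigma}+b_mb_m^{\sigma}=0$, which forces a forbidden $\mathbb{F}_{p^2}$-relation.
\end{itemize}
Some argument of this kind is needed in place of your Step 4.
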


\begin{rem}\label{remcondaiprop}
\begin{enumerate}
\item Note that by Proposition \ref{propSysEq}, for every choice in (1) and (2), every system of equations of the form in (3) and (4) has solutions. For $l>0$, there will be infinitely many solutions because the number of equations is less than it is in Proposition \ref{propSysEq}.
\item In the following, we only need the explicit description of the $a_{i,l}$ for $l=0$ and the assertion that for every choice of the $a_{i,0},a_{i,1}$ as above, one can indeed find an associated element $v$ generating a self-dual lattice. 
\item Notice that the condition in (3) is a particular case of (4), it is only written separately because in this case, the equation is more explicit and at the same time for the particular case that we use the most. Also, the (non-existence of a) condition for $l\geq g-1$ as in (2) is a particular case of (4). Indeed, for large $l$, there is no $j$ as in (4). Again, this part of the proposition is only to highlight that the conditions are particularly simple in that range. 
\item The main assertion of this proposition is the last sentence, the  others are rather a (very helpful) rearrangement of the conditions that $\langle v,F^jv\rangle\in\brZ$ for all $j$.
\item For $g=4$, this is the analogue of the coordinates studied by Harashita in \cite{HarPpav4}, and used by Karemaker and Yu in \cite[Section 7]{KaremakerYu}.
\end{enumerate}
\end{rem}

\begin{ex}
As an illustration consider the particular case that $g=5$. Then $v\in \Lambda_0$ is of the form $\sum_{i=1}^5 \sum_l[a_{i,l}]\Pi^l X_i$ with $X_1=e_1$, $X_2=e_2$, $X_3=p^{-1}e_3$, $X_4=p^{-2}e_4$ and $X_5=p^{-2}e_5$. The vector $v$ generates a Dieudonn\'e lattice with $(\mathcal D\cdot v)_{\tau}=\Lambda_0$ if and only if the $a_{i,0}$ are linearly independent over $\mathbb{F}_{p^2}$. Self-duality of $\mathcal D\cdot v$ is equivalent to the condition $\langle v, F^jv\rangle\in \breve{\mathbb{Z}}_p$ for $j=1,2,3$. Computing the pairing modulo $\breve{\mathbb{Z}}_p$, one sees that the condition for $j=3$ is equivalent to $$\sum_{i=1}^5 a_{i,0}a_{6-i,0}^{\sigma^3}=0\in \overline{\mathbb{F}}_p,$$ the condition for $l=0$ and $j=3$ of the proposition, with $d_{3,0}=-a_{3,0}a_{3,0}^{\sigma^3}$. 

For $j=2$ we get
$$\sum_{i=1}^5 a_{i,0}a_{6-i,1}^{\sigma^2}=0\in \overline{\mathbb{F}}_p.$$ This is the condition for $l=1$ and $j=2$ in the above proposition, with $d_{2,1}= -a_{3,0}a_{3,1}^{\sigma^2}$.

For $j=1$ we obtain in the same way
$$\sum_{i=1}^5 [a_{i,0}a_{6-i,0}^{\sigma}]+p\left(\sum _{i=1}^5 [a_{i,0}a_{6-i,2}^{\sigma}]+\sum_{i=1}^5 [a_{i,1}a_{6-i,1}^{\sigma}]\right)\in p^2\breve{\mathbb{Z}}_p.$$ Taken modulo $p$, this yields the condition for $l=0$ and $j=1$ of the proposition. We have $d_{1,0}= -a_{3,0}a_{3,0}^{\sigma}$. If this is satisfied, the remaining condition implies the equation for $l=2$ and $j=1$, with $$d_{1,2}\equiv -p^{-1}\sum_{i=1}^5 [a_{i,0}a_{6-i,0}^{\sigma}]-\sum_{i=1}^5 [a_{i,1}a_{6-i,1}^{\sigma}]-[a_{3,0}a_{3,2}^{\sigma}]\pmod{p}.$$
\end{ex}

\begin{proof}[Proof of Proposition \ref{propcondai}]
We have to show that the conditions of the proposition are equivalent to the conditions that all $a_{i,0}$ are linearly independent over $\mathbb{F}_{p^2}$ and that $\langle v,F^jv\rangle\in\brZ$ for all $1\leq j\leq g-2$. By Remark \ref{remcondaiprop}(3), it is enough to consider conditions (1) and (4) of the proposition. We first show that given (1),  the condition that $\langle v,F^jv\rangle\in\brZ$ for all $j$ is equivalent to (4). Recall the decomposition of $\langle v,F^jv\rangle\in p^{\lfloor(-g+1+j)/2\rfloor}\brZ$ given in \eqref{eqpairingincoord}. We iteratively consider the condition $\langle v,F^jv\rangle\in p^{n}\brZ$, for $n=\lfloor(-g+1+j)/2\rfloor+1,\dotsc, 0$. We consider \eqref{eqpairingincoord} modulo $p^n\brZ$ for some such $n$. Among all the summands we consider those containing factors of the form $a_{i,l_0}^{\sigma^c}$ for some $c$ with maximal possible $l_0$. They arise for those summands $$(-1)^l[a_{i,l}a_{g+1-i,l'}^{\sigma^j}]\langle X_i,\Pi^{l+l'+j} X_{g+1-i}\rangle$$ where $\langle X_i,\Pi^{l+l'+j} X_{g+1-i}\rangle$ is of valuation $n-1$, and where $l$ or $l'$ is equal to $0$. From the explicit calculation of these pairings (carried out after \eqref{eqpairingincoord}) one sees that then  $l_0=2n-2-j+g$. The sum in  \eqref{eqpairingincoord} taken modulo $p^n\brZ$ then has the form
\begin{equation}\label{eqtosort}
p^{n-1}\sum_{i=1}^{\lfloor g/2\rfloor} ([a_{i,0}a_{g+1-i,2n-2-j+g}^{\sigma^j}]+(-1)^{g-1} [a_{i,0}^{\sigma^j}a_{g+1-i,2n-2-j+g}])+\Delta
\end{equation}
where $\Delta$ is a sum of expressions of the form $\pm p^c[a_{i,l}a_{g+1-i,l'}^{\sigma^j}]$ for suitable signs and exponents $c<{n-1}$, and such only pairs $(i,l)$ and $(g+1-i,l')$ occur where either the first entry is at most  $\lceil\tfrac{g+1}{2}\rceil$ or the second entry is smaller than $l_0$. From our iterative approach, we may assume that $\Delta\in p^{n-1}\brZ$. The requirement that \eqref{eqtosort} vanishes modulo $p^n$ yields an equation for the $a_{i,2n-2-j+g}$ for all $i>\lceil\tfrac{g+1}{2}\rceil$ in terms of $a_{i,l'}$ with $i\leq \lceil\tfrac{g+1}{2}\rceil$ or with $l'<2n-2-j+g$. Sorting these equations according to the index $l_0=2n-2-j+g$ instead of according to $j$, we obtain for a fixed $l_0$ the system of equations listed in (4). By Proposition \ref{propSysEq}, using condition (1), this system of equations has solutions $a_{i,l}$ for any given values of $a_{i,l'}$ (with $l'<l$) satisfying (1) and (4). Thus the two sets of conditions are equivalent.

It remains to show that for any choice of $a_{i,0}$ satisfying (1) and (3), the $a_{i,0}$ for $1\leq i\leq g$ are linearly independent over $\mathbb{F}_{p^2}$. To ease the notation, we skip all second indices $0$ for the rest of this proof. Assume that the assertion does not hold, i.e.~there is a tuple $(a_{i})$ satisfying (1) and (3), but such that some $a_{i_0}$ is an $\mathbb{F}_{p^2}$-linear combination of the others. Then $i_0> (g+1)/2$. We exchange $a_{i_0}$ and $a_{g}$ as well as $a_{g+1-i_0}$ and $a_{1}$. We obtain an element $v'$ of $\Lambda_0$ satisfying (1) and (3) (the summands in the equations are just permuted) and such that our above assumption holds for $i_0=g$. We may assume that this already holds for $v$. We write $a_{g}=\sum_{1\leq i<g}\delta_ia_{i}$ with $\delta_i\in\mathbb{F}_{p^2}$.\\

\noindent{\it Case 1: $g$ is even.}

In the equations in (3), the summands involving the $a_l$ with indices $1,i,g+1-i,g$ for some $1<i<g$ read
$$a_1a_g^{\sigma^j}-a_ga_1^{\sigma^j}\pm a_ia_{g+1-i}^{\sigma^j}-\pm a_{g+1-i}a_i^{\sigma^j}$$ where the $\pm$-signs are both times $1$ if $i\leq g/2$ and $-1$ otherwise. Thus we see that the validity of the equations in (3) does not change if we replace $a_{g}$ by $a_{g}-\delta_i a_{i}$ and $a_{g+1-i}$ by $a_{g+1-i}\pm\delta_i a_1$. Making such replacements for all $i>1$, we may assume that $a_{g}$ is an $\mathbb{F}_{p^2}$-multiple of $a_{1}$. After this transformation, (1) is also still valid. But then, Remark \ref{remexplsol} shows that we may replace $a_{g}$ by $0$ and obtain again a solution of the equations in (3). Altogether, we obtain a new tuple $(a_i)$ satisfying the relations in (1) and (3), and in addition $a_{g}=0$.

The system of equations then reads 
$$\sum_{i=2}^{g/2}(a_{i}a_{g+1-i}^{\sigma^j}-a_{g+1-i}a_i^{\sigma^j})=0$$ for all even $2\leq j\leq g-2$. Although we assumed $a_1,\dotsc, a_{g/2}, a_{g/2+1}$ to be fixed and linearly independent over $\mathbb{F}_{p^2}$, we now view $a_{g/2+1}$ for a moment as variable. Then this system of $g/2-1$ equations (in the $g/2-1$ variables $a_{g/2+1},\dotsc, a_{g-1}$) is as in Remark \ref{remexplsol}, with all $d_j=0$. The explicit description of the solutions in the remark shows that $a_{g/2+1}$ is an $\mathbb{F}_{p^2}$-linear combination of $a_1,\dotsc,a_{g/2}$, contradicting our choice of $a_{g/2+1}$ in (1). This finishes the proof of the proposition for even $g$.\\

\noindent{\it Case 2: $g$ is odd.}

 An analogous argument as for $g$ even shows that for $i\neq m, 1$ we can replace $a_{g}$ by $a_g-\delta_ia_{i}$ and at the same time $a_{g+1-i}$ by $a_{g+1-i}+\sigma(\delta_i)a_1$ and may thus assume that the corresponding $\delta_i$ vanish. In other words, $a_{g}=\delta_1 a_{1}+\delta_{m}a_{m}$ for certain $\delta_i\in\mathbb{F}_{p^2}$. Inserting this into the system of equations, we get
\begin{align*}
0&= \sum_{i=2}^{m-1}(a_{i}a_{g+1-i}^{\sigma^j}+a_{g+1-i}a_i^{\sigma^j})+(a_{1}a_{g}^{\sigma^j}+a_{1}^{\sigma^j}a_{g})+a_ma_m^{\sigma^j}\\
&=\sum_{i=2}^{m-1}(a_{i}a_{g+1-i}^{\sigma^j}+a_{g+1-i}a_i^{\sigma^j})+(\delta_1^{\sigma}a_{1}a_{1}^{\sigma^j}+\delta_m^{\sigma}a_1a_m^{\sigma^j}+\delta_1a_1a_{1}^{\sigma^j}+\delta_ma_1^{\sigma^j}a_m)+a_ma_m^{\sigma^j}\\
&=\sum_{i=2}^{m-1}(a_{i}a_{g+1-i}^{\sigma^j}+a_{g+1-i}a_i^{\sigma^j})+
(a_m+\delta_m^{\sigma}a_1)(a_m+\delta_m^{\sigma}a_1)^{\sigma^j}+(\delta_1^{\sigma}+\delta_1-\delta_m\delta_m^{\sigma})a_1a_1^{\sigma^j}
\end{align*}
for all odd $j=1,\dotsc, g-2$. Notice that $\delta_1^{\sigma}+\delta_1-\delta_m\delta_m^{\sigma}\in \mathbb{F}_p$.  Let $\varepsilon\in\mathbb{F}_{p^2}$ with $\varepsilon\varepsilon^{\sigma}= \delta_1^{\sigma}+\delta_1-\delta_m\delta_m^{\sigma}$. Put $\tilde a_1=\varepsilon a_1$ and $\tilde a_m=a_m+\delta_m^{\sigma}a_1$. If $\varepsilon\neq 0$, then $\tilde a_1,a_2,\dotsc, a_{m-1},\tilde a_m$ are $\mathbb{F}_{p^2}$-linearly independent and there are $a_{m+1},\dotsc a_{g-1}$ such that for all odd $j=1,\dotsc, g-2$ we have
$$\sum_{i=2}^{m-1}(a_{i}a_{g+1-i}^{\sigma^j}+a_{g+1-i}a_i^{\sigma^j})+
\tilde a_m \tilde a_m^{\sigma^j}+\tilde a_1\tilde a_1^{\sigma^j}=0.$$
If $\varepsilon=0$, then $a_1, a_2,\dotsc, a_{m-1},\tilde a_m$ are $\mathbb{F}_{p^2}$-linearly independent and there are $a_{m+1},\dotsc a_{g-1}$ such that for all odd $j=1,\dotsc, g-2$ we have
$$\sum_{i=2}^{m-1}(a_{i}a_{g+1-i}^{\sigma^j}+a_{g+1-i}a_i^{\sigma^j})+
\tilde a_m \tilde a_m^{\sigma^j}=0.$$
In both cases, we view these as $(g-1)/2=m-1$ equations in the $m-2$ variables $a_{m+1},\dotsc, a_{g-1}$.

On $\overline{\mathbb{F}}_{p}^{g-1}$ we define a symmetric non-degenerate pairing $\left\{\cdot,\cdot\right\}$ by
$$\left\{(x_i)_i,(y_j)_j\right\}=\sum_{i\neq 1,m}x_iy_{g+1-i}+x_my_m+x_1y_1.$$
It also satisfies $$\left\{(\sigma(x_i))_i,(\sigma(y_j))_j\right\}=\sigma(\left\{(x_i)_i,(y_j)_j\right\}).$$ 
Let $\bar v=(\tilde a_1,a_2,\dotsc, a_{m-1},\tilde a_m, a_{m+1},\dotsc, a_{g-1})$. Then the above system of equations is equivalent to 
\begin{equation}\label{eqsyseqpair}
0=\left\{\bar v,\sigma^j(\bar v)\right\}=\left\{\bar v,\sigma^{-j}(\bar v)\right\}
\end{equation} for all odd $j$ in the above range and where $\sigma$ acts componentwise.

We iteratively define for $i=2,\dotsc, m-1$ functions $\Phi_i:\overline{\mathbb{F}}_p\rightarrow \overline{\mathbb{F}}_p$ whose kernels are the $\mathbb{F}_{p^2}$-sub-vector spaces generated by $a_2,\dotsc,a_i$. Let $\Phi_2(x)=x-\frac{a_2}{a_2^{\sigma^2}}\sigma^2(x)$. Let $\tilde a_i=\Phi_{i-1}(a_i)$ and $\Phi_i=\Delta_i\circ \Phi_{i-1}$ with $\Delta_i(x)=x-\frac{\tilde a_i}{\tilde{a}_i^{\sigma^2}}\sigma^2(x)$. Then iteratively we see that $\Phi_i$ has the required kernel, and since we assumed $a_2,\dotsc, a_{m-1}$ to be linearly independent, again $\tilde{a}_i$ is different from zero, so that the $\Phi_i$ are well-defined. By definition, $\Phi_{m-1}$ is an $\overline{\mathbb{F}}_p$-linear combination of the even powers of Frobenius $1, \sigma^2,\dotsc, \sigma^{2(m-2)}$. We denote the induced map $\overline{\mathbb{F}}_{p}^{g-1}\rightarrow\overline{\mathbb{F}}_{p}^{g-1}$ also by $\Phi_{m-1}$. Then the components of $\Phi_{m-1}(\bar v)$ with index $2,\dotsc, m-1$ vanish, but the $m$th component $b_m:=\Phi_{m-1}(\tilde a_{m})$ is nonzero. The first component $b_1:=\Phi_{m-1}(\tilde a_{1})$ is nonzero if and only if $\tilde a_1\neq 0$, i.e. $\varepsilon\neq 0$. From this we see that 
\begin{equation}\label{eqcontr1}
\left\{\Phi_{m-1}(\bar v),\sigma\circ \Phi_{m-1}(\bar v)\right\}=b_1b_1^{\sigma}+b_mb_m^{\sigma}.
\end{equation} 
On the other hand, the left hand side is a linear combination of expressions of the form 
$$\left\{\sigma^{l}(\bar v),\sigma^{l'}(\bar v)\right\}$$ with $l$ even and $l'$ odd, and both between $0$ and $2(m-2)+1$. Since the pairing is symmetric and compatible with $\sigma$, \eqref{eqsyseqpair} implies that each of these expressions is zero. Altogether, the left hand side of \eqref{eqcontr1} is zero, thus 
\begin{equation}\label{eqlast2}
b_1b_1^{\sigma}+b_mb_m^{\sigma}=0.
\end{equation}
In case that $\varepsilon=b_1=0$, this implies that $b_m=0$, i.e. $\tilde a_m$ is in the kernel of $\Phi_{m-1}$. Thus $\tilde a_m$ is linearly dependent on $a_2,\dotsc, a_{m-1}$, a contradiction. If $b_1\neq 0$, \eqref{eqlast2} implies that $$\frac{b_m}{b_1}\cdot \sigma(\frac{b_m}{b_1})=-1,$$ thus $\frac{b_m}{b_1}\in \mathbb{F}_{p^2}$, again a contradiction to $\tilde a_1,a_2,\dotsc,a_{m-1},\tilde a_m$ being linearly independent. Thus also for $g$ odd, our assumption that the $a_i$ (for $1\leq i\leq g$) are linearly dependent over $\mathbb{F}_{p^2}$ is false, which finishes the proof.
\end{proof}

\section{Proof of Theorem \ref{thmmain}}\label{secpfoort}

We consider the $\tau$-stable lattice $\Lambda_0$ and the associated irreducible component $\mathcal C_{\Lambda_0}$ of $\mathcal M_{g}^{\circ}$ studied in the previous section.

\begin{claim}\label{keyclaim}
Let $g\geq 4$. For $s=2$ and all $p$, and for $s=3$ if $p=2$, there is a dense open subscheme $Y_s$ of $\mathcal C_{\Lambda_0}$ such that for every $M\subset N$ corresponding to an $\overline{\mathbb F}_p$-valued point of $Y_s$ and every $j\in J(\mathbb Q_p)$ of finite order stabilizing $\Lambda_0$ and $M$, we have that $\pm j$ induces the identity of $\Lambda_0/\Pi^s\Lambda_0$.
\end{claim}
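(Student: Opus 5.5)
We argue via the stabilizer of $\Lambda_0$ and the explicit coordinates of Proposition \ref{propcondai}. Let $j\in J(\mathbb Q_p)$ have finite order and stabilize $\Lambda_0$. Then $j$ preserves the filtration $\Pi^l\Lambda_0$ and commutes with $F$, $V$ and $\tau$. Hence $j$ induces an $\mathbb F_{p^2}$-linear automorphism $\bar j_0$ of the $\tau$-invariants of $\Lambda_0/\Pi\Lambda_0$, that is, of the $\mathbb F_{p^2}$-span of the $\bar X_i$. More generally, $j$ induces automorphisms $\bar j_l$ of $\Lambda_0/\Pi^{l+1}\Lambda_0$. We write $j$ modulo $\Pi^s$ as $j\equiv A_0+\Pi A_1\pmod{\Pi^2}$ (plus a $\Pi^2A_2$ term when $s=3$), with $A_k\in\GL_g$ or ${\rm Mat}_g$ over $\mathbb F_{p^2}$ in the basis $X_i$. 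If $j$ also stabilizes $M=\mathcal D v$, then $jv$ is again a generator of $M$. So $jv\equiv u\, v \pmod{FM+VM}$ for a unit $u$, and modulo the relevant filtration steps $jv=\sum_{a,b}[c_{a,b}]F^aV^bv$ by Lemma \ref{lem33}(1). Since $j$ is finitely many candidates (Lemma \ref{propopen}), it suffices to show that for each fixed torsion $j$ with $\pm j\not\equiv 1\pmod{\Pi^s}$, the locus of $v$ with $j(\mathcal Dv)=\mathcal Dv$ is a proper closed subset of $\mathcal C_{\Lambda_0}$.

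\emph{Step 1 (level $\Pi$).} Reducing $jv\in M$ modulo $\Pi\Lambda_0$ gives $A_0(a_{i,0})_i=\lambda(a_{i,0})_i$ for some scalar $\lambda\in\overline{\mathbb F}_p$. Apply $\sigma^2$, using that $A_0$ has $\mathbb F_{p^2}$ entries. The vectors $\sigma^{2k}(a_{\cdot,0})$ span $\overline{\mathbb F}_p^g$ by Lemma \ref{lemailinindep}, and this forces $A_0$ to be a scalar $\lambda\in\mathbb F_{p^2}^\times$. No genericity is needed here. Compatibility with the pairing then gives $\lambda\lambda^\sigma=1$ or a similar norm condition.

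\emph{Step 2.} To pin $\lambda=\pm1$ and kill $A_1$, we use the second layer. Modulo $\Pi^2\Lambda_0$, the condition $jv\in M$ says
\[
\lambda(a_{i,1})+A_1(a_{i,0})^{\sigma^{\pm1}}\in \overline{\mathbb F}_p\cdot(a_{i,1})+\operatorname{span}\bigl(Fv,Vv\bigr)+\dots
\]
which gives linear relations between the vectors $a_{\cdot,0}^{\sigma}$, $a_{\cdot,0}^{\sigma^{-1}}$ and $a_{\cdot,1}$. By Proposition \ref{propcondai}(1), the coordinates $a_{i,1}$ with $i\le\lceil\frac{g+1}{2}\rceil$ can be chosen freely, and by Remark \ref{remcondaiprop}(2) every such choice extends to a point of $\mathcal C_{\Lambda_0}$. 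Since $g\ge4$, there are at least three free coordinates. For generic choices, the relation then forces the coefficient of $a_{\cdot,1}$ to equal $\lambda$ and $A_1(a_{\cdot,0})^{\sigma}$ to lie in the span of $a_{\cdot,0}^{\sigma},a_{\cdot,0}^{\sigma^{-1}}$. Applying $\sigma^2$-iterates again together with Lemma \ref{lemailinindep} makes $A_1$ scalar. Combining this with the condition that $j$ commutes with $F$ and $\tau$ on $\Lambda_0/\Pi^2$, and with the $\sigma$-semilinearity of $\Pi$ versus $F$, should force $\lambda\in\mathbb F_p$ and $A_1$ compatible with $\pm1$. The polarization ($j\in\GSp$ with similitude $1$) then gives $\lambda=\pm1$ and $\pm j\equiv1\pmod{\Pi^2}$.

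\emph{Step 3 ($p=2$, $s=3$).} We iterate once more with the layer $a_{i,2}$, which is again free for $i\le\lceil\frac{g+1}{2}\rceil$. When $p=2$ we have $\lambda=1$, and the same genericity argument with $a_{\cdot,2}$ shows that $A_2$ is killed.

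The main obstacle is Step 2. The relations mix $\sigma$- and $\sigma^{-1}$-twists of $a_{\cdot,0}$ with the free $a_{\cdot,1}$, and one has to check carefully that the constrained coordinates (those for $i>\lceil\frac{g+1}{2}\rceil$, determined by Proposition \ref{propcondai}(4)) do not conspire to make the fixed locus all of $\mathcal C_{\Lambda_0}$. I would first try to exhibit the fixed-point condition as a nontrivial polynomial in the free $a_{i,1}$ and show it is nonzero by a dimension count, which is where $g\ge4$ should enter.
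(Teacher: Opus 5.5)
Your overall reduction is the paper's: finitely many torsion $j$ stabilize $\Lambda_0$, so it suffices to show each fixed locus is a proper closed subset, peeling off one $\Pi$-layer at a time. The layer arguments, however, have genuine gaps.

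\textbf{Step 1.} Your claim that no genericity is needed is false. From $A_0a=\lambda a$, with $a=(a_{i,0})_i$ and $A_0$ defined over $\mathbb{F}_{p^2}$, you only get $A_0\sigma^{2k}(a)=\sigma^{2k}(\lambda)\sigma^{2k}(a)$. So Lemma \ref{lemailinindep} shows that $A_0$ is diagonalizable with eigenvalues $\sigma^{2k}(\lambda)$, not that it is scalar. For example, multiplication by a generator $\mu$ of $\mathbb{F}_{p^{2g}}$ over $\mathbb{F}_{p^2}$, written as a matrix on $\mathbb{F}_{p^{2g}}\cong\mathbb{F}_{p^2}^g$, has $g$ distinct eigenvalues. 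Its eigenvectors therefore have $\mathbb{F}_{p^2}$-linearly independent coordinates.

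What is missing is $\sigma^2(\lambda)=\lambda$. The paper gets it by choosing $v$ generically in the free coordinates $a_{i,0}$, $i\le\lfloor g/2\rfloor+1$. Then $\tau^g\bar v$, expanded in the eigenbasis $\tau^k\bar v$ ($0\le k<g$), has at least $\lfloor g/2\rfloor+1$ nonzero coefficients. Two of these are consecutive, which forces $\sigma^{2i}(\lambda)=\sigma^{2i+2}(\lambda)$.

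\textbf{Step 2.} Here the genericity is placed in the wrong coordinates. Write $jX_i\equiv\lambda X_i+\sum_l\delta_{li}Y_l$. Then $jY_i=F(jX_i)\equiv\sigma(\lambda)Y_i$ modulo $p\Lambda_0$, so the $Y$-layer of $jv-[\lambda]v$ is $(\sigma(\lambda)-\lambda)a_{\cdot,1}+\delta a_{\cdot,0}$, and the admissible set is $\mathrm{span}(\sigma(a_{\cdot,0}),\sigma^{-1}(a_{\cdot,0}))$. The free $a_{i,1}$ serve exactly to force $\sigma(\lambda)=\lambda$, hence $\lambda=\pm1$.

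After that the $a_{\cdot,1}$ drop out completely. What remains is $\delta a_{\cdot,0}\in\mathrm{span}(\sigma(a_{\cdot,0}),\sigma^{-1}(a_{\cdot,0}))$, a condition on $a_{\cdot,0}$ alone, and the goal is $\delta=0$, not ``$A_1$ scalar''. Iterating $\sigma^2$ with Lemma \ref{lemailinindep} uses only linear independence of the $a_{i,0}$, and that is not enough. If the $a_{i,0}$ form an $\mathbb{F}_{p^2}$-basis of $\mathbb{F}_{p^{2g}}$, then $\sigma(a_{\cdot,0})=\delta a_{\cdot,0}$ for some nonzero $\delta$ over $\mathbb{F}_{p^2}$.

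So the genericity must come from $a_{\cdot,0}$, whose coordinates with $i>\lceil\frac{g+1}{2}\rceil$ are tied together by the polarization. This is the obstacle you name, and the $a_{\cdot,1}$ cannot resolve it. The paper argues with $3\times3$ minors. When a constrained $a_{n,0}$ enters, it uses two facts: by Proposition \ref{propSysEq} there are at least $p$ admissible values of $a_{n,0}$ with all other coordinates fixed, and the minor is purely inseparable of degree at most $p$ in $a_{n,0}$.

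\textbf{Step 3.} Since $j\equiv1\pmod{\Pi^2}$ acts trivially on $\Pi\Lambda_0/p\Pi\Lambda_0$, the $a_{\cdot,2}$ never appear at all. Because $pv\in M$, the condition becomes $\delta' a_{\cdot,0}\in\mathrm{span}(a_{\cdot,0},\sigma^{2}(a_{\cdot,0}),\sigma^{-2}(a_{\cdot,0}))$, where $jX_i\equiv X_i+p\sum_l\delta'_{li}X_l$. This condition allows $\delta'$ to be scalar; for $p=2$, $j=-1$ gives $\delta'=1$. So the correct target is ``$\delta'$ scalar'', not ``killed''.

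The paper reaches this target as follows:
\begin{enumerate}
\item Lemma \ref{lempreps3}, combined with the symmetries $a_{m,0}\mapsto\varepsilon a_{m,0}$ ($\varepsilon\in\mathbb{F}_4\setminus\mathbb{F}_2$) and $a_{l,0}\mapsto a_{l,0}+a_{g+1-l,0}$ of the solution set, shows that $\delta'$ is diagonal (for $g\ge5$).
\item Base change in $\GSp_g(\mathbb{Z}_{p^2})$ then shows that $\delta'$ is central.
\item The case $g=4$ needs a separate quadratic-term computation.
\end{enumerate}
Your sketch contains nothing that replaces these steps.
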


By Lemma \ref{lemmaKY63} and Remark \ref{rem38}, the claim for $s=1$ (and some fixed $g\geq 4$) implies Oort's conjecture for all $p\geq 5$. Likewise, the claim for $s=2$ (and $g\geq 4$) implies Oort's conjecture for all $p\geq 3$. For $s=3$, we will only prove the claim if $p=2$. It then implies Oort's conjecture for $p=2$ and $g\geq 4$.

\begin{rem}\label{remclaim}
\begin{enumerate}

\item From the proof of Lemma \ref{propopen}, we see that there are only finitely many $j\in J(\mathbb Q_p)$ of finite order stabilizing $\Lambda_0$. Thus it is enough to show that for any fixed $j$ not inducing  $\pm 1$ on $\Lambda_0/\Pi^s\Lambda_0$, the fixed point locus of $j$ in $\mathcal C_{\Lambda_0}$ is not all of $\mathcal C_{\Lambda_0}$. 
\item In view of Proposition \ref{propcondai}, it is thus enough to find for every fixed $j$ not inducing  $\pm 1$ on $\Lambda_0/\Pi^s\Lambda_0$ a $v$ as in that proposition such that $j$ does not stabilize $\mathcal D v$. 
\item To show (1), we proceed iteratively and consider the analogous assertion for $s=0,1,2,$ and $3$, the latter only for $p=2$. For $s=0$, the claim is obviously true. Thus for $s=1,2,3$ we may assume in addition that the fixed element $j$ induces $\pm$id  on $\Lambda_0/\Pi^{s-1}\Lambda_0$.
\end{enumerate} 
\end{rem}

\subsection{Claim \ref{keyclaim} holds for $s=1$.}\label{sec41}

Let $j$ be as in Remark \ref{remclaim}(1), and assume that $j$ induces an automorphism of every $M$ associated with a point of $\mathcal C_{\Lambda_0}$. Then $j$ stabilizes $\Lambda_0$ and hence also $\Pi\Lambda_0=F\Lambda_0$. It thus induces an automorphism $j_1$ of $\Lambda_0/\Pi\Lambda_0$, which is a $g$-dimensional $\overline{\mathbb{F}}_p$-vector space generated by the images of the $X_i$. We denote this basis again by the same letters $X_i$.

{\it Step 1: $j_1$ is a scalar multiple of the identity.}

We consider some $v$ as in Proposition \ref{propcondai} and let $M$ be the associated Dieudonn\'e lattice. The automorphism $j_1$ stabilizes the image of $M+\Pi\Lambda_0/\Pi\Lambda_0$, a 1-dimensional sub-vector space of $\Lambda_0/\Pi\Lambda_0$ generated by the image $\bar v$ of $v$. In particular, $\bar v$ is an eigenvector of $j_1$. Let $\lambda$ be the associated eigenvalue. Since $j$ commutes with $F$ and with multiplication by $p$, we obtain that $j_1$ commutes with $\tau$. Hence for all $i$, also $\tau^i \bar v$ is an eigenvector of $j_1$, with eigenvalue $\sigma^{2i}(\lambda)$. Since $M_{\tau}=\Lambda_0$, we obtain that $\Lambda_0/\Pi\Lambda_0$ is generated by the $\tau^i(\bar v)$ for $i=0,\dotsc, g-1$. In particular, $j_1$ is semisimple. We claim that there is some $v$ such that the vector $\tau^{g}(\bar v)$ (which is also an eigenvector of $j_1$) cannot be expressed as a linear combination of $\lfloor g/2\rfloor$ or less of these $g$ basis vectors. Indeed, the first $\lfloor \tfrac{g}{2}\rfloor+1$ of the coordinates $a_{i,0}$ of $\bar v$ can be chosen freely by Proposition \ref{propcondai}. Consider the matrix consisting of the $a_{i,0}^{\sigma^{2j}}$ for $j=0,\dotsc, g$ and $i\leq \lfloor \tfrac{g}{2}\rfloor+1$. All minors of maximal size of this matrix are polynomials in the $a_{i,0}$ for $i\leq \lfloor \tfrac{g}{2}\rfloor+1$. Considering the multidegrees of the involved monomials, one sees that these polynomials are not identically zero. Hence, there is a $v$ such that none of them vanishes at $\bar v$. On the other hand, the condition that the vector $\tau^{g}(\bar v)$ cannot be expressed as a linear combination of $\lfloor g/2\rfloor$ of the other vectors is equivalent to the condition that all minors of maximal size of the matrix and involving the last column are non-zero. This proves our claim.

Since all of the basis vectors as well as $\tau^g(\bar v)$ are eigenvectors for $j_1$, and since $\tau^g(\bar v)$ is a linear combination of the basis vectors $\tau^j(\bar v)$ for $j<g$, at least $\lfloor g/2\rfloor+1$ of them, as well as $\tau^g(\bar v)$, have to be eigenvectors for the same eigenvalue $\sigma^{2g}(\lambda)$. Hence $\sigma^{2i}(\lambda)=\sigma^{2i+2}(\lambda)$ for some $i\leq g$ which implies that $\lambda$ is fixed by $\sigma^2$. Thus all eigenvalues agree and $j_1$ is a scalar.\\

\noindent{\it Step 2: $\sigma(j_1)= j_1$}

The quotient $\Lambda_0/p\Lambda_0$ is a $2g$-dimensional $\overline{\mathbb F}_p$-vector space with basis consisting of the classes of the $X_i$ and $Y_i$, which we denote again by the same letters. The image $M_2$ of $M$ has as basis the images of $v$, $Fv$ and $Vv$. We denote the image of $v$ by $v_2$ and let $j_2$ be the automorphism of $\Lambda_0/p\Lambda_0$ induced by $j$. Then $$v_2=\sum_{i=1}^{g}([a_{i,0}]X_i+[a_{i,1}]Y_i).$$ 

Let $\lambda$ be as in Step 1. Since $j_2$ commutes with $F$, it multiplies each $Y_i=F(X_i)\in (M+p\Lambda_0)/p\Lambda_0$ with $\sigma(\lambda)$. Furthermore, each $X_i$ is mapped to $\lambda X_i+\sum_{l}\delta_{li}Y_l$ for some coefficients $\delta_{li}\in\mathbb{F}_{p^2}$. Thus $\lambda v_2-j_2v_2\in \Pi \Lambda_0/p\Lambda_0$ satisfies 
\begin{equation}\label{eqstep2}
\lambda v_2-j_2v_2=\sum_{i=1}^{g}\bigl((\lambda-\sigma(\lambda))a_{i,1}-\sum_{l=1}^g \delta_{il}a_{l,0}\bigr)Y_i.
\end{equation}
Notice that $\sum_{l=1}^g \delta_{il}a_{l,0}$ only depends on ($j_2$ and) the $a_{l,0}$ whereas the first summand $(\lambda-\sigma(\lambda))[a_{i,1}]$ depends only on $j_2$ and the coefficients $[a_{i,1}]$. If $j$ is an automorphism of $M$, then \eqref{eqstep2} is contained in the sub-vector space $M\cap \Pi\Lambda_0/M\cap p\Lambda_0\subseteq \Pi \Lambda_0/p\Lambda_0$. By Lemma \ref{lem33} it is freely generated by the images of $F(v)$ and $V(v)$, two vectors only depending on the coefficients $a_{i,0}$. Recall that the $a_{i,1}$ for $i\leq \lfloor \tfrac{g}{2}\rfloor +1$ can be chosen freely, and $\lfloor \tfrac{g}{2}\rfloor +1\geq 3$ since $g>3$. Thus for generic $a_{i,1}$, this condition can only be satisfied if $\lambda-\sigma(\lambda)=0$, which finishes Step 2.\\

Finally we use  that $j\in \Sp_{g}(\mathcal O_D)$, hence $\sigma(\lambda)\cdot \lambda=1$. Since $j_1$ is invariant under $\sigma$, we obtain that ${j_1}^2=1$, or $j_1=\pm 1$.

\subsection{Claim \ref{keyclaim} holds for $s=2$}

Let again $j_2$ be the automorphism of $\Lambda_0/p\Lambda_0$ induced by $j$. Possibly multiplying by $-1$, we may assume that $j_1=1$.

 Our assumption on $j_1$ implies that $j_2$ satisfies $j_2(X_i)=X_i+\sum_j \delta_{ji}Y_j$ with $\delta_{ji}\in \mathbb F_{p^2}$ and $j_2(Y_i)=Y_i$ for all $i$. We want to show that all $\delta_{ji}$ are equal to $0$. We have
$$j_2(v)-v=\sum_j\sum_i a_{i,0}\delta_{ji}Y_j\in \Pi\Lambda_0/p\Lambda_0.$$ Thus this vector is in $M_2$ if and only if it lies in the span of the images of $Fv$ and $Vv$ in this quotient, which are $\sum_i \sigma(a_{i,0})Y_i$ and $\sigma^{-1}(a_{i,0})Y_i$. Consider the $3\times g$-matrix whose rows are the images under $\sigma$ of these three vectors,
\begin{equation}\label{eqclaims2}
 \left( \begin{array}{ccc} 
\sigma^2(a_{1,0})&\dotsm&\sigma^2(a_{g,0})\\
a_{1,0}&\dotsm&a_{g,0}\\
\sigma(\sum_i \delta_{1i}a_{i,0})&\dotsm&\sigma(\sum_i \delta_{gi}a_{i,0})
\end{array}\right).
\end{equation}
Then $j_2$ maps $M_2$ to $M_2$ if and only if this matrix has rank 2, i.e.~all of its $3\times 3$-minors vanish.

Assume that there is a $\delta_{l_0i}\neq 0$. Then we want to show that generically, the above matrix has rank 3. If $l_0>\lfloor\tfrac{g}{2}\rfloor+1$, then we can make a base change exchanging $X_{l_0}$ and $X_{g+1-l_0}$ and analogously $Y_{l_0}$ and $Y_{g+1-l_0}$ and hence assume that $l_0\leq \lfloor\tfrac{g}{2}\rfloor+1$. Consider the minor of \eqref{eqclaims2} corresponding to the three columns $i_1=l_0,i_2,i_3$ with all $i_l\leq \lfloor\tfrac{g}{2}\rfloor+1$. 

First consider the case that for all $n>\lfloor\tfrac{g}{2}\rfloor+1$, we have $\delta_{i_ln}=0$ for $l=1,2,3$. Then the above minor is a non-zero polynomial in $a_{1,0},\dotsc, a_{\lfloor\tfrac{g}{2}\rfloor+1,0}$, variables which vary in an open subset of $\mathbb{A}^{\lfloor\tfrac{g}{2}\rfloor+1}$. Hence generically, this determinant is nonzero, and $j_2$ does not stabilize $M_2$. Consider now the remaining case that there is an $n>\lfloor\tfrac{g}{2}\rfloor+1$ with $\delta_{i_ln}\neq 0$ for some $l\leq 3$. We fix values for all $a_{i,0}$ that satisfy the requirements of Proposition \ref{propcondai}. Then the system of equations in Proposition \ref{propSysEq} has at least $p-1$ further solutions for the $a_{i,0}$ that have the same values for all $i\neq n$, but differ in the value of $a_{n,0}$. 

On the other hand, view 
\begin{equation*}
\det \left( \begin{array}{ccc} 
\sigma^2(a_{i_1,0})&\sigma^2(a_{i_2,0})&\sigma^2(a_{i_3,0})\\
a_{i_1,0}&a_{i_2,0}&a_{i_3,0}\\
\sigma(\sum_i \delta_{i_1i}a_{i,0})&\sigma(\sum_i \delta_{i_2i}a_{i,0})&\sigma(\sum_i \delta_{i_3i}a_{i,0})
\end{array}\right)
\end{equation*}
as a polynomial in $a_{n,0}$ with coefficients determined by the $\delta_{li}$ and the remaining fixed $a_{i,0}$. Since $a_{n,0}$ only occurs in the last row, this polynomial is of degree at most $p$ and purely inseparable. Thus it is zero for a single value of $a_{n,0}$. Choosing any of the other at least $p-1$ possible values for $a_{n,0}$ shows that there is some $v$ such that the minor is non-zero. Thus the Dieudonn\'e lattice $M$ generated by $v$ is not stabilized by $j$. This finishes the proof of Claim \ref{keyclaim} for $s=2$.

\subsection{Claim \ref{keyclaim} holds for $s=3$ and $p=2$}\label{sec43}

We proceed in a similar way as in the preceeding subsection. We may assume that $j$ is such that $j_2=1$. Let $j_3$ be the automorphism of $\Lambda_0/p\Pi\Lambda_0$ induced by $j$. Let $M_3$ be the image of $M$ in $\Lambda_0/p\Pi\Lambda_0$. Again we denote the images of the $X_i$ and $Y_i$ in $\Lambda_0/p\Pi\Lambda_0$ by the respective same letters.\\

It is enough to show that $j_3$ is a scalar multiple of the identity. Indeed, then $j_3=(1+[\lambda] 2)\cdot \id$ for some scalar $1+[\lambda] 2\in \mathbb Z_4/(4)$. Compatibility with the polarization implies that $j_3\sigma(j_3)=1\in \mathbb Z_4/(4)$, thus $\lambda=\sigma(\lambda)$. Hence $\lambda\in \mathbb{F}_2$, and $j_3=\pm 1\in \mathbb{Z}_2/(4)=\mathbb Z/4\mathbb Z$. This then implies Claim \ref{keyclaim}.

We want to show that as soon as $j_3$ is not a scalar, there is some $v$ as in Proposition \ref{propcondai} such that $j_3(v)-v\notin M_3$. 
Since $j_2=1$ we have  $j_3(Y_i)=Y_i$ for all $i$ and $$j_3(X_i)=X_i+\sum_j \delta_{ji}pX_j$$ for certain $\delta_{ji}\in \mathbb F_{p^2}$. Thus for any $v$ as in the proposition,
$$j_3(v)-v=\sum_j\sum_i a_{i,0}\delta_{ji}pX_j\in p\Lambda_0/\Pi p\Lambda_0.$$
The quotient $p\Lambda_0/\Pi p\Lambda_0$ is a $g$-dimensional $\overline{\mathbb{F}}_p$-vector space generated by the (images of the) $pX_i$. By Lemma \ref{lem33}, the image of $M\cap p\Lambda_0$ is a sub-vector space generated by $F^2v, V^2 v$ and $pv$, which are also linearly independent. In coordinates, they are given by 
$$pv=\sum_i a_{i,0}pX_i,\quad F^2v=\sum_i a_{i,0}^{\sigma^2}pX_i, \quad V^2v=\sum_i a_{i,0}^{\sigma^{-2}}pX_i.$$

From these equations we can read off the coordinates of the images of the four vectors $V^2v,pv,F^2 v, j_3(v)-v$ in $p\Lambda_0/\Pi p\Lambda_0$ with respect to the basis consisting of the images of the $pX_i$. We consider the $4\times g$-matrix $A$ with rows equal to the images under $\sigma^2$ of these coordinates. In other words, the $i$th column of $A$ is
$$(a_{i,0},a_{i,0}^{\sigma^2},a_{i,0}^{\sigma^4},\sum_j \delta_{i,j}a_{j,0}^{\sigma^2} )^T.$$ We have to show that if this matrix has rank 3 on all of $\mathcal C_{\Lambda_0}$, then $j_3$ is a scalar.\\

Before we begin with the actual proof, we provide a lemma.
\begin{lemma}\label{lempreps3}
Let $B$ be a $4\times 4$-matrix with columns $$(x_i,x_i^{\sigma^2},x_i^{\sigma^4},\varepsilon_i )^T$$ for certain fixed $\varepsilon_i\in\mathbb F_{p^2}$ and $\mathbb{F}_{p^2}$-linearly independent $x_1,\dotsc,x_4\in\overline{\mathbb{F}}_p$. Then $\det B=0$ if and only if all $\varepsilon_i$ are zero.
\end{lemma}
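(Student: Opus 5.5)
The proof runs through a cofactor expansion along the last row of $B$, followed by a Frobenius-twisting argument, as in the proof of Lemma \ref{lemailinindep}.

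\emph{The easy direction.} If all $\varepsilon_i$ vanish, the last row of $B$ is zero, so $\det B=0$.

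\emph{The converse: setup.} Suppose $\det B=0$ but $\varepsilon=(\varepsilon_i)\neq 0$. Expanding along the last row gives
$$\det B=\sum_{i=1}^4 \pm\varepsilon_i D_i,$$
where $D_i$ is the $3\times 3$ minor built from the columns $(x_k,x_k^{\sigma^2},x_k^{\sigma^4})^T$ with $k\neq i$. The plan is to reinterpret the vanishing of this expansion as a linear dependence and then push it forward with Frobenius.

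\emph{Reinterpreting the vanishing.} The vector $w=(\pm D_i)_i$ is, up to scaling, the unique vector orthogonal, for the standard bilinear form, to the three rows $(x_i)_i$, $(x_i^{\sigma^2})_i$, $(x_i^{\sigma^4})_i$. These rows are linearly independent: by Lemma \ref{lemailinindep} applied to the four $\mathbb{F}_{p^2}$-independent elements $x_1,\dotsc,x_4$, the vectors $(x_i^{\sigma^{2j}})_i$ for $j=0,\dotsc,3$ form an invertible matrix. In particular $w\neq 0$. So $\det B=0$ means $\varepsilon\perp w$. Since $\varepsilon\neq 0$, $\varepsilon$ does not lie in the span $W$ of the first three rows: otherwise the last row of $B$ would be a combination of the others, which is fine for $\det B=0$. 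So instead the argument goes the other way around: $\det B=0$ means the four rows of $B$ are dependent. Since the first three rows are independent, this gives
$$\varepsilon=\alpha(x_i)_i+\beta(x_i^{\sigma^2})_i+\gamma(x_i^{\sigma^4})_i$$
for some $\alpha,\beta,\gamma\in\overline{\mathbb{F}}_p$.

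\emph{Applying Frobenius.} Since $\varepsilon$ has entries in $\mathbb{F}_{p^2}$, applying $\sigma^2$ fixes $\varepsilon$, so
$$\varepsilon=\alpha^{\sigma^2}(x_i^{\sigma^2})_i+\beta^{\sigma^2}(x_i^{\sigma^4})_i+\gamma^{\sigma^2}(x_i^{\sigma^6})_i.$$
Subtracting the two expressions gives a linear relation among the four vectors $(x_i^{\sigma^{2j}})_i$, $j=0,1,2,3$. These vectors are linearly independent by Lemma \ref{lemailinindep}, so all coefficients vanish: $\alpha=0$, $\beta=\alpha^{\sigma^2}$, $\gamma=\beta^{\sigma^2}$, and $\gamma^{\sigma^2}=0$. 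Hence $\alpha=\beta=\gamma=0$, so $\varepsilon=0$, a contradiction.

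\emph{Main obstacle.} The only step needing care is the justification that the first three rows are independent and that the four Frobenius twists $(x_i^{\sigma^{2j}})_i$, $j=0,\dotsc,3$, are independent. Both follow from Lemma \ref{lemailinindep} with $n=4$, using the version with shifted exponents (any fixed $l$).
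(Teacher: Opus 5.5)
Your proof is correct, but it takes a different route from the paper's. The trouble is confined to one stray passage, noted at the end.

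\textbf{The paper's route (columns).} Assuming $\varepsilon_1\neq 0$, the paper subtracts $\varepsilon_j/\varepsilon_1$ times the first column from the $j$-th. This is legitimate because these scalars lie in $\mathbb{F}_{p^2}$ and so commute with $\sigma^2$. The new columns therefore again have the shape $(y_j,y_j^{\sigma^2},y_j^{\sigma^4},\ast)^T$, and the last row becomes $(\varepsilon_1,0,0,0)$. Expanding along that row gives $\det B=\pm\varepsilon_1$ times the $3\times 3$ Moore-type determinant of $y_2,y_3,y_4$. These remain $\mathbb{F}_{p^2}$-independent, so the determinant is nonzero by Lemma \ref{lemailinindep} with $n=3$.

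\textbf{Your route (rows).} You note that $\det B=0$ forces the $\sigma^2$-fixed vector $\varepsilon$ into the span of $(x_i^{\sigma^{2j}})_i$ for $j=0,1,2$. Comparing this expression with its $\sigma^2$-twist gives a relation among the four twists $j=0,\dots,3$. That contradicts Lemma \ref{lemailinindep} with $n=4$ and shift $l=2$. The coefficient bookkeeping $\alpha=0$, $\beta=\alpha^{\sigma^2}$, $\gamma=\beta^{\sigma^2}$ is right.

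\textbf{What each buys.} Both arguments use the $\mathbb{F}_{p^2}$-rationality of $\varepsilon$ essentially: the paper uses it as permission for column operations, you use it as Frobenius invariance of a row vector. The paper's version gives a slightly more explicit conclusion, namely $\det B=\pm\varepsilon_1\cdot(\text{nonzero})$. It needs only the $3\times 3$ case of the lemma, at the cost of normalizing via a nonzero $\varepsilon_1$ and checking independence of the $y_j$. Yours needs no normalization or case choice and applies the lemma directly to the original $x_i$. It is the more structural argument: a $\sigma^2$-fixed vector cannot lie in the span of fewer twists than the dimension.

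\textbf{Editorial point.} Delete the passage about the orthogonal vector $w$, and in particular the sentence claiming that $\varepsilon\notin W$ because $\varepsilon\neq 0$. That sentence is unjustified, since it is essentially what you are proving, and nothing later uses it. The converse should begin directly with: $\det B=0$, so the four rows are dependent, and since the first three are independent, $\varepsilon$ lies in their span.
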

\begin{proof}
Assume that some $\varepsilon_i\neq 0$, without loss of generality for $i=1$. For every $j\neq 1$ we subtract $\varepsilon_j/\varepsilon_1$ times the first column from the $j$th. Let $y_1=x_1$, and $y_j=x_j-\tfrac{\varepsilon_j}{\varepsilon_1} x_1 $. Then the matrix still has the same determinant, but reads
$$\left( \begin{array}{cccc} 
y_1&y_2&y_3&y_4\\
y_1^{\sigma^2}&y_2^{\sigma^2}&y_3^{\sigma^2}&y_4^{\sigma^2}\\
y_1^{\sigma^4}&y_2^{\sigma^4}&y_3^{\sigma^4}&y_4^{\sigma^4}\\
\varepsilon_1&0&0&0
\end{array}\right).$$
Since the $y_2,y_3,y_4$ are linearly independent over $\mathbb{F}_{p^2}$, the determinant is nonzero by Lemma \ref{lemailinindep}, as claimed.
\end{proof}

We claim that it is enough to show that $j_3\in \GSp_{g}(\mathbb{Z}_{p^2})$ is a diagonal matrix. Indeed, this then holds as well after any base change in $\GSp_g(\mathbb{Z}_{p^2})\subseteq\GSp_g(\mathcal O_D)$. Hence $j_3$ is contained in every $\mathbb{Q}_{p^2}$-rational torus of $\GSp_g$ and is thus central. To show that all $\delta_{il}=0$ if $l\neq i$, we distinguish several cases.

\noindent{\it Case 1: $g\geq 5$ is odd and $l=m=(g+1)/2$.}

Let $S$ be any $4\times 4$-submatrix of $A$ obtained by selecting some columns $i_1,i_2,i_3,i_4$, all different from $l$. We want to show that $\delta_{i_jm}=0$ for all $j$. Fix some solution $(a_{i,0})_i$ of the equations in Proposition \ref{propcondai}(1) and (3). Let $\varepsilon\in\mathbb{F}_{p^2}\setminus \mathbb{F}_p$. Then $\varepsilon^{\sigma} \varepsilon=\varepsilon^3=1$, hence replacing $a_{m,0}$ by $\varepsilon a_{m,0}$ and keeping all other coordinates yields a different solution $(a_{i,0}')_i$ of the equations in Proposition \ref{propcondai}. Let $S$ and $S'$ be the associated $4\times 4$-matrices, both assumed to be of rank 3. The condition that the ranks of these matrices are 3 is equivalent to $\det S=\det S'=0$. We use that determinants are linear in the last row, and that $S$ and $S'$ agree except for their last rows. Taking the difference, we obtain that 
$$a_m^{\sigma^2}\cdot\det\left( \begin{array}{cccc} 
a_{i_1}&a_{i_2}&a_{i_3}&a_{i_4}\\
a_{i_1}^{\sigma^2}&a_{i_2}^{\sigma^2}&a_{i_3}^{\sigma^2}&a_{i_4}^{\sigma^2}\\
a_{i_1}^{\sigma^4}&a_{i_2}^{\sigma^4}&a_{i_3}^{\sigma^4}&a_{i_4}^{\sigma^4}\\
(\varepsilon-1)\delta_{i_1m}&(\varepsilon-1)\delta_{i_2m}&(\varepsilon-1)\delta_{i_3m}&(\varepsilon-1)\delta_{i_4m}
\end{array}\right)=0.$$
Here, $a_{m,0}^{\sigma^2}$ times the last row is the difference of the last rows of $S$ and $S'$. Thus by Lemma \ref{lempreps3}, all $\delta_{i_j m}$ are zero. Since the $i_j$ were chosen arbitrarily among the indices different from $m$, we obtain $\delta_{i,m}=0$ for all $i\neq m$.\\

\noindent{\it Case 2: $g\geq 5$ and $l\neq g+1-l$.}

We proceed in a similar way: Let $(a_{i,0})_i$ be as in Proposition \ref{propcondai}. This time we can replace the value of $a_{l,0}$ by $a_{l,0}+a_{g+1-l,0}$ and keep all other $a_{i,0}$ fixed to obtain a new solution of the equations of Proposition \ref{propcondai}, compare Remark \ref{remexplsol}. Let $A'$ be the corresponding analogue of $A$. Let $S$ resp. $S'$ be the $4\times 4$-submatrices of $A$ resp. $A'$ obtained by selecting some columns $i_1,i_2,i_3,i_4$, all different from $l$. Our assumption that $A$ and $A'$ have rank at most 3 implies that $\det S=\det S'=0$. Again, $S$ and $S'$ have the same first to third rows, and linearity of the determinant in the last row yields
 $$a_{g+1-l}^{\sigma^2}\det\left( \begin{array}{cccc} 
a_{i_1}&a_{i_2}&a_{i_3}&a_{i_4}\\
a_{i_1}^{\sigma^2}&a_{i_2}^{\sigma^2}&a_{i_3}^{\sigma^2}&a_{i_4}^{\sigma^2}\\
a_{i_1}^{\sigma^4}&a_{i_2}^{\sigma^4}&a_{i_3}^{\sigma^4}&a_{i_4}^{\sigma^4}\\
\delta_{i_1l}&\delta_{i_2l}&\delta_{i_3l}&\delta_{i_4l}
\end{array}\right)=0.$$
Again, by Lemma \ref{lempreps3}, all $\delta_{i_j l}$ are zero, and hence all $\delta_{il}$ for $i\neq l$ are zero.\\

\noindent{\it Case 3: $g=4$.}

Notice that this case has already been considered by Karemaker and Yu in \cite{KaremakerYu}. We include the following part of the proof to make the paper more self-contained, and to be able to refer to it in Section \ref{secnonpol}.

To simplify the notation, we only consider tuples $(a_{i,0})$ with $a_{1,0}=1$. For $g=4$, the only relation among the $a_{i,0}$ (besides linear independence) is the one in Proposition \ref{propcondai}(3) for $j=2$, which reads 
\begin{equation}\label{eqreldim4}
a_4^{\sigma^2}-a_4=a_3a_2^{\sigma^2}-a_2a_3^{\sigma^2}.
\end{equation}
Setting $\varphi$ to be the right hand side of this equation, we get $a_4^{\sigma^2}=a_4+\varphi$, and $a_4^{\sigma^4}=a_4+\varphi+\varphi^{\sigma^2}$.
We assume that 
$$\det\left( \begin{array}{cccc} 
1&a_2&a_3&a_4\\
1&a_2^{\sigma^2}&a_3^{\sigma^2}&a_4^{\sigma^2}\\
1&a_2^{\sigma^4}&a_3^{\sigma^4}&a_4^{\sigma^4}\\
\sum_j\delta_{1,j}a_{j}^{\sigma^2}&\sum_j\delta_{2,j}a_{j}^{\sigma^2}&\sum_j\delta_{3,j}a_{j}^{\sigma^2}&\sum_j\delta_{4,j}a_{j}^{\sigma^2}
\end{array}\right)=0$$ for all $\mathbb{F}_{p^2}$-linearly independent tuples satisfying this relation. Using the above relation, we may replace $a_4^{\sigma^2}$ and $a_4^{\sigma^4}$ in all matrix entries by a linear polynomial in $a_4$. Then the above determinant is a polynomial in $a_4$ (with coefficients in $\overline{\mathbb{F}}_p[a_2,a_3]$) of degree at most 2. However, for fixed $a_2,a_3$ (and still taking $a_1=1$), \eqref{eqreldim4} only determines $a_4$ up to adding an element of $\mathbb{F}_{p^2}$, so there are four possible values. Hence the (at most) quadratic polynomial given by the determinant has to vanish identically. The quadratic term of this polynomial agrees with the quadratic term of
\begin{align*}
\det\left( \begin{array}{cccc} 
1&a_2&a_3&a_4\\
1&a_2^{\sigma^2}&a_3^{\sigma^2}&a_4\\
1&a_2^{\sigma^4}&a_3^{\sigma^4}&a_4\\
\delta_{1,4}a_{4}&\delta_{2,4}a_{4}&\delta_{3,4}a_{4}&0
\end{array}\right)&=\det\left( \begin{array}{cccc} 
1&a_2&a_3&0\\
1&a_2^{\sigma^2}&a_3^{\sigma^2}&0\\
1&a_2^{\sigma^4}&a_3^{\sigma^4}&0\\
\delta_{1,4}a_{4}&\delta_{2,4}a_{4}&\delta_{3,4}a_{4}&-\delta_{1,4}a_{4}^2
\end{array}\right)\\
&=-\delta_{1,4}a_{4}^2\det\left( \begin{array}{ccc} 
1&a_2&a_3\\
1&a_2^{\sigma^2}&a_3^{\sigma^2}\\
1&a_2^{\sigma^4}&a_3^{\sigma^4}
\end{array}\right)
\end{align*}
where the first equality follows by subtracting $a_4$ times the first column from the fourth. Since $1,a_2,a_3,a_4$ are linearly independent, the vanishing of the quadratic term thus implies $\delta_{1,4}=0$. Symmetric arguments also show that $\delta_{4,1}=\delta_{2,3}=\delta_{3,2}=0$. The same argument also applies after making a base change in $\GSp_g(\mathcal O_D)$, in our case replacing $X_1$ by $X_1+aX_2$ and $X_3$ by $X_3-a^{-1}X_4$ for some $a\in\mathbb F_4^{\times}$, keeping the other $X_i$ fixed and defining the replacements of the $Y_i$ in the unique way to make the base change commute with $F$. Then $\delta=(\delta_{ij})$ maps $X_1+aX_2$ to $\sum_j \delta_{1j}X_j+\sum_j \delta_{2j}aX_j$. By our above considerations that also need to hold for the coefficients of $j_3$ with respect to this new basis, we obtain that this sum is a linear combination of $X_1+aX_2,X_2,$ and $X_3+a^{-1}X_4$. We use $\delta_{14}=\delta_{23}=0$ and project to the span of $X_3,X_4$. We obtain that $\delta_{13}X_3+a\delta_{24}X_4$ is a multiple of $X_3-a^{-1}X_4$. Thus either $\delta_{13}=\delta_{24}=0$ or $\tfrac{\delta_{13}}{\delta_{24}}=-a^2.$ But this relation needs to hold for every $a\in \mathbb{F}_{p^2}^{\times}$. Hence $\delta_{13}=\delta_{24}=0$. Again, symmetric arguments show that all other $\delta_{ij}$ with $i\neq j$ also vanish. Thus $j_3$ is contained in the diagonal torus, which finishes the proof of Theorem \ref{thmmain} also for this case.\qed

\section{The non-polarized case}\label{secnonpol}

Let $g\geq 4$. Let $\mathbb{X}$ be as before a supersingular $p$-divisible group of dimension $g$. It is unique up to isogeny, and we assume that $\mathbb{X}$ is as in the preceeding sections. Consider as in \cite[2]{RZ} the formal scheme representing the functor assigning to any scheme $S$ such that $p$ is locally nilpotent on $S$ the set 
$$\mathcal M^{{\rm np}}_g(S):=\{(X,\rho)\}/\sim$$ where $X$ is a $p$-divisible group over $S$, and where $\rho: \mathbb{X}_{\bar S}\rightarrow X_{\bar S}$ is a quasi-isogeny over the reduction modulo $p$ of $S$. We denote by $\mathcal M^{{\rm np}}_g$ the underlying reduced subscheme of this moduli space. Then by \cite{RZ}, the forgetful map induces a closed embedding $$\mathcal M_g\rightarrow \mathcal M_g^{{\rm np}}.$$ By \cite[Lemma 4.7]{modpdiv}, the locus $\mathcal M_g^{{\rm np},\circ}$ where the $a$-invariant of the universal $p$-divisible group is $1$ is again open and dense. The main goal of this section is the following analogue of Theorem \ref{thmgenautRZ}.

\begin{thm}\label{thmgenauRZnp}
There is an open and dense subscheme $Y\subseteq \mathcal M^{{\rm np}}_g$ such that for any $y\in Y$, the reduction $X_y$ of the universal $p$-divisible group $X$ on $\mathcal M^{{\rm np}}_g$ does not have any automorphisms of finite order other than unit-root scalars in $\mathbb{Z}_p^{\times}$. 
\end{thm}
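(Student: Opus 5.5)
We follow the same strategy as in the polarized case, now without the self-duality constraint. First, $\mathcal M^{{\rm np},\circ}_g$ is open and dense, and the group of self-quasi-isogenies of $\mathbb X$, now $J^{\rm np}(\mathbb Q_p)\cong\GL_g(D)$, acts on it. Lemma \ref{lem33}(1),(2) hold for any supersingular isocrystal, so the map $M\mapsto M_\tau$ from Proposition \ref{propsetirrcomp} is again locally constant and equivariant. By \cite{modpdiv}, $J^{\rm np}(\mathbb Q_p)$ acts transitively on the set of irreducible components, and each fibre $s^{-1}(\Lambda)$ is irreducible. It therefore suffices to fix one $\tau$-stable lattice; we may take the same $\Lambda_0$ as before, since it is $\tau$-stable. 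By \cite[Lemma 4.8]{modpdiv}, the points of the corresponding component $\mathcal C^{\rm np}_{\Lambda_0}$ are the lattices $\mathcal D v$ with $v$ as in \eqref{eqcoordv}, where the only condition is that the $a_{i,0}$ are $\mathbb F_{p^2}$-linearly independent. All the $a_{i,l}$ are otherwise free. This is strictly more freedom than in Proposition \ref{propcondai}. As in Corollary \ref{corMtau}, any automorphism lies in $\Aut(\Lambda_0,F)\cong\GL_g(\mathcal O_D)$. Only finitely many torsion elements exist modulo $V_{p,3}$ by Lemma \ref{lemmaKY63}, so openness follows exactly as in Lemma \ref{propopen}. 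The remaining task is to show that each torsion $j$ that is not a scalar in $\mu(\mathbb Z_p)$ fails to stabilize some $\mathcal D v$.

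We then rerun the arguments of Section \ref{secpfoort} iteratively in $s$. For $s=1$, Step 1 goes through verbatim, and is even easier since all $a_{i,0}$ are free. It shows that $j_1=\lambda$ is a scalar with $\lambda\in\mathbb F_{p^2}^\times$. Step 2 also applies unchanged because the $a_{i,1}$ are free, giving $\sigma(\lambda)=\lambda$, so $\lambda\in\mathbb F_p^\times$. This is the point where the polarization argument $\lambda\sigma(\lambda)=1$ is no longer available. Instead, let $[\lambda]\in\mu_{p-1}\subset\mathbb Z_p^\times$ be the Teichm\"uller lift and replace $j$ by $[\lambda]^{-1}j$. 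This is again a torsion automorphism, because $[\lambda]$ is central and of finite order, and it satisfies $j_1=1$. For $s=2$, the minor argument applied to the matrix \eqref{eqclaims2} is easier here, since every $a_{i,0}$ is free. Hence whenever some $\delta_{li}\neq0$, the relevant $3\times3$ minor is a nonzero polynomial in the independent variables, so it is nonzero generically. This gives $j_2=1$. By Lemma \ref{lemmaKY63}, this finishes the proof for $p\ge3$: $[\lambda]^{-1}j\in V_{p,2}$ is torsion and hence trivial.

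For $p=2$ we need $s=3$. We again use the $4\times g$ matrix $A$ and Lemma \ref{lempreps3}. Since the $a_{i,0}$ are now unconstrained, we can vary a single $a_{l,0}$ by adding $\mathbb F_{p^2}$-multiples of another coordinate. This is the analogue of Cases 1 and 2, and here it needs no compatibility with the equations. Comparing determinants then forces all off-diagonal $\delta_{il}$ to vanish. For $g=4$ the argument is even simpler, because the relation \eqref{eqreldim4} is absent. Next we must show that $j_3$ is scalar rather than merely diagonal. We conjugate by elements of $\GL_g(\mathbb Z_{p^2})$; this is legitimate since $\Lambda_0$ and the component are stable under them. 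Then $j_3$ lies in every rational maximal torus and is therefore central, so $j_3=1+2[\mu]$ with $\mu\in\mathbb F_4$. The commutation of $j$ with $F$, combined with the freedom in the $a_{i,1}$, forces $\mu=\sigma(\mu)$ by the Step 2 argument one level down. Hence $j_3\in(\mathbb Z/4)^\times=\{\pm1\}$. After multiplying by $\pm1$, we get $j\in V_{2,3}$, which is torsion free, so $j$ is a scalar $\pm1\in\mathbb Z_2^\times$ times a Teichm\"uller unit.

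The step I expect to be the main obstacle is this last part: establishing $\sigma(\mu)=\mu$ at level $s=3$ for $p=2$ without a polarization. The plan is to mimic \eqref{eqstep2}. Write $j_3v-v$ modulo $M_3$ in terms of $a_{i,1}$ and $a_{i,2}$. The term $(\mu-\sigma(\mu))a_{i,1}$ appears in the $p\Pi$-graded piece, and it must lie in the two-dimensional span of the images of $F^3v,V^3v,\dotsc$. Because at least three of the $a_{i,1}$ are free, this forces $\mu=\sigma(\mu)$.
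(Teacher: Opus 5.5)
You handle the reduction to $\mathcal C^{\rm np}_{\Lambda_0}$, openness, and levels $s=1,2$ essentially as the paper does, and for $p\ge 3$ your argument is complete. The gap is in the last step for $p=2$, which you flag yourself: deriving $\sigma(\mu)=\mu$ from the freedom in the $a_{i,1}$.

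\textbf{The step is misformulated.} The term $(\sigma(\mu)-\mu)a_{i,1}$ only appears when you compare $jv$ with $(1+2[\mu])v$ in $\Lambda_0/p^2\Lambda_0$, not when you look at $j_3v-v$ modulo $M_3$. Moreover, by Lemma \ref{lem33}(1) the relevant graded piece $(M\cap\Pi^3\Lambda_0)/(M\cap\Pi^4\Lambda_0)$ is spanned by the four vectors $F^3v,\ pFv,\ pVv,\ V^3v$, not by two.

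\textbf{For $g\ge5$} the argument can be repaired, since all $g>4$ coordinates $a_{i,1}$ are free.

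\textbf{For $g=4$} it cannot work at all. By Lemma \ref{lem33}(2) you have $p\Pi\Lambda_0=\Pi^{g-1}\Lambda_0\subseteq M$. Hence whether $j$ stabilizes $M$ depends only on $j$ modulo $\Pi^3$. If $j_3$ is multiplication by $1+2[\mu]\in\brZ^\times$, then $j$ stabilizes every point of the component, whatever $\mu$ is. No choice of $v$ can detect $\mu$, so for $(g,p)=(4,2)$ your proof does not conclude.

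\textbf{The missing idea is that torsion alone forces this.} The paper does not try to prove $\sigma(\mu)=\mu$ geometrically. Its Claim \ref{keyclaimnp} stops at ``$j_3$ is a scalar in $\mathbb Z_p^\times+p\mathbb Z_{p^2}$'' and concludes via Lemma \ref{lemmaKY63} and Remark \ref{remscalarZp}.

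Concretely, suppose $j\in\GL_g(\mathcal O_D)$ is torsion with $j\equiv(1+2[\mu])\cdot 1 \bmod \Pi^3M_g(\mathcal O_D)$.
\begin{enumerate}
\item Since $\Pi^3M_g(\mathcal O_D)$ is a two-sided ideal and $(1+2[\mu])^2\equiv1\bmod 4$, you get $j^2\in V_{2,3}$. As $V_{2,3}$ is torsion free, $j^2=1$.
\item Then $e=(1+j)/2=(1+[\mu])\cdot1+\Pi x$ with $x\in M_g(\mathcal O_D)$, using $\Pi^3=2\Pi$. This $e$ is an idempotent whose reduction mod $\Pi$ is $(1+\mu)I\in M_g(\mathbb F_4)$.
\item Idempotence of the reduction forces $\mu\in\mathbb F_2$.
\item An idempotent that is $\equiv0$ or $\equiv1\bmod\Pi$ equals $0$ or $1$, because $e=e^n\in\Pi^nM_g(\mathcal O_D)$ in the first case. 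Hence $j=\pm1$.
\end{enumerate}
With this replacing your final paragraph, the proof works for all $g\ge4$.

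\textbf{A smaller point.} You say the $g=4$ case of the $s=3$ off-diagonal argument is ``even simpler'', but it still needs an argument. Varying a single $a_{l,0}$ requires four columns different from $l$, which needs $g\ge5$. For $g=4$, since all $a_{i,0}$ are free, you can instead read off the coefficients of $\det A$ as a polynomial in independent variables.
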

\begin{rem}\label{remscalarZp}
Scalars in $\mathbb{Z}_p^{\times}$ are clearly contained in the automorphism group of every $p$-divisible group. 

If $p>2$, the unit roots in $\mathbb{Z}_p^{\times}$ are the (unique with this property) lifts of the elements of $\mathbb{F}_p^{\times}$, which are all $p-1$st roots of unity.

If $p=2$, the only unit roots in $\mathbb{Z}_2^{\times}$ and even in $\mathbb{Z}_2^{\times}+p\mathcal O_D=1+p\mathcal O_D$ are $\pm 1$. Indeed, any unit root in this subring would be of $p$-power order, thus it is enough to show that the subring does not contain primitive 4th roots of unity. Assume that $\lambda$ is a primitive 4th root of unity. Then $\lambda^2=-1\in 1+1\cdot 2+\Pi p \mathcal O_D$. An easy calculation shows that such $\lambda$ do not exist in $1+p\mathcal O_D$.
\end{rem}

The proof of Theorem \ref{thmgenauRZnp} parallels to some extent the proof of Theorem \ref{thmgenautRZ}. Thus, we mainly indicate the differences and similarities. We still consider the isocrystal $(N,F)$ of $\mathbb{X}$, and the basis $e_i,f_i$ with $1\leq i\leq g$ chosen as before, just omitting the polarization/symplectic pairing.

Let us review some results from \cite{modpdiv}. By \cite[Lemma 4.6]{modpdiv} the locus $\mathcal M^{{\rm np},\circ}_g$ where the $a$-invariant of the universal $p$-divisible group is 1 is open and dense. By \cite[Thm.~B]{modpdiv}, the group $J^{{\rm np}}_b(\mathbb{Q}_p)\cong \GL_g(D)$ of self-quasi-isogenies of $\mathbb{X}$ acts transitively on the set of irreducible components of $\mathcal M^{{\rm np}}_g$ and of $\mathcal M^{{\rm np},\circ}_g$. The irreducible components are in bijection with $\tau$-stable lattices where $\tau=F^2p^{-1}$. In particular, to show Theorem \ref{thmgenauRZnp}, it is enough to show the analogous statement on some fixed irreducible component of $\mathcal M^{{\rm np},\circ}_g$. We consider the $\tau$-stable lattice $\Lambda_0$ from the previous sections and denote the corresponding irreducible component by $\mathcal C_{\Lambda_0}^{{\rm np}}$. Elements of $\mathcal M^{{\rm np}}_g(\overline{\mathbb{F}}_p)$ correspond to Dieudonn\'e lattices in $N$. A Dieudonn\'e lattice $M\subseteq N$ corresponds to a point of $\mathcal C_{\Lambda_0}^{{\rm np}}$ if and only if $a(M)=1$ and $M_{\tau}=\Lambda_0$ where $M_{\tau}$ is as before the smallest $\tau$-stable lattice containing $M$. From \cite[Lemma 4.7]{modpdiv} we obtain the following non-polarized analogue of Proposition \ref{propcondai}.
 
 \begin{prop}
 Let $v=\sum_{i=1}^g\sum_{l\geq 0} [a_{i,l}]\Pi^l X_i\in\Lambda_0$. Then $M:=\mathcal D v$ corresponds to a point of $\mathcal C_{\Lambda_0}^{{\rm np}}$ if and only if the $a_{i,0}\in\overline{\mathbb{F}}_p$ are linearly independent over $\mathbb{F}_{p^2}$. 
 
 Conversely, every Dieudonn\'e lattice corresponding to a point of $\mathcal C_{\Lambda_0}^{{\rm np}}$ is as $\mathcal D$-module generated by some (non-unique) $v$ as above.
 \end{prop}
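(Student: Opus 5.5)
The plan is to reduce the statement to the two facts already established for arbitrary supersingular isocrystals, namely \cite[Lemma 4.8]{modpdiv} (quoted before Lemma \ref{lemailinindep}) and Lemma \ref{lem33}(1),(2), which the paper notes hold without the symplectic pairing.

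\emph{The ``if'' direction.} Let $v\in\Lambda_0$ have $\mathbb{F}_{p^2}$-linearly independent $a_{i,0}$, and put $M:=\mathcal D v$. By \cite[Lemma 4.8]{modpdiv}, $M$ is a lattice with $M_\tau=\Lambda_0$. The coordinates with respect to $\Pi^lX_i$ and with respect to $F^lX_i$ differ only by Teichmüller twists of the $a_{i,l}$, because $X_i$ is $\tau$-stable. This change does not affect the linear-independence condition on the $a_{i,0}$. The lattice $M$ is a Dieudonn\'e lattice because it is a $\mathcal D$-module by construction. It is cyclic, so $a(M)\le1$. Since $M\subsetneq N$ is a nonzero lattice, $FM+VM\neq M$, so $a(M)=1$. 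Hence $M$ lies in $\mathcal C^{\rm np}_{\Lambda_0}$ by the description of that component given just before the proposition.

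\emph{The ``only if'' direction, together with the converse statement.} Let $M$ correspond to a point of $\mathcal C^{\rm np}_{\Lambda_0}$. Then $a(M)=1$, so $M/(FM+VM)$ is one-dimensional. Choose $v\in M$ mapping to a generator; by Nakayama for the Dieudonn\'e ring, $M=\mathcal D v$. Since $M\subseteq M_\tau=\Lambda_0$, we may expand $v=\sum[a_{i,l}]\Pi^lX_i$. This already gives the converse statement. It remains to show that the $a_{i,0}$ are independent over $\mathbb{F}_{p^2}$.

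I would argue this directly. Suppose $v\in\Pi\Lambda_0$. Then $M\subseteq\Pi\Lambda_0$, and $\Pi\Lambda_0$ is $\tau$-stable. This contradicts the minimality of $M_\tau=\Lambda_0$, so the image $\bar v$ of $v$ in $\Lambda_0/\Pi\Lambda_0$ is nonzero.

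Next, suppose the $a_{i,0}$ lie in an $\mathbb{F}_{p^2}$-rational hyperplane $H\subset\Lambda_0/\Pi\Lambda_0$. Here $\mathbb{F}_{p^2}$-rational means $\tau$-stable, since $\tau$ acts on the basis $X_i$ trivially and as $\sigma^2$ on coordinates. Let $L$ be the preimage of $H$ in $\Lambda_0$. Then $L$ is a lattice containing $\Pi\Lambda_0$, and it is $\tau$-stable.

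I also need $L$ to be $F$- and $V$-stable, so that $L$ is a Dieudonn\'e lattice. This holds because $F\Lambda_0=V\Lambda_0=\Pi\Lambda_0\subseteq L$.

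The lattice $L$ contains $v$, so $M=\mathcal D v\subseteq L\subsetneq\Lambda_0$. This contradicts the fact that $M_\tau=\Lambda_0$ is the smallest $\tau$-stable lattice containing $M$, by Zink's lemma. Hence the $a_{i,0}$ are not contained in any rational hyperplane. By Lemma \ref{lemailinindep} they are then $\mathbb{F}_{p^2}$-linearly independent.

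\emph{Expected main obstacle.} The only delicate point is the ``if'' direction, which rests on \cite[Lemma 4.8]{modpdiv}. Its content is that independence forces $M_\tau$ to be all of $\Lambda_0$ rather than merely contained in it. If a self-contained argument is wanted, I would run the induction of Lemma \ref{lem33}(1). It shows that the images of the $\tau^i(\bar v)$, for $i<g$, span $\Lambda_0/\Pi\Lambda_0$, again by Lemma \ref{lemailinindep}. Therefore $M_\tau+\Pi\Lambda_0=\Lambda_0$. Applying Nakayama to the $\tau$-stable lattice $M_\tau$ then gives $M_\tau=\Lambda_0$.
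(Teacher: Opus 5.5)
Your proposal is correct and is essentially the paper's argument: the paper gives no separate proof and obtains the proposition directly from \cite{modpdiv}, combining the criterion quoted before Lemma \ref{lemailinindep} (that $(\mathcal D v)_\tau=\Lambda_0$ if and only if the $a_{i,0}$ avoid every $\mathbb{F}_{p^2}$-rational hyperplane) with the description of $\mathcal C^{\rm np}_{\Lambda_0}$ by $a(M)=1$ and $M_\tau=\Lambda_0$, exactly as you do. Your direct Zink-minimality argument for the ``only if'' half is a valid self-contained replacement for that half of the citation; note only that $FM+VM\neq M$ holds because $F$ and $V$ are topologically nilpotent on the supersingular lattice $\Lambda_0$ (e.g.\ $F^2\Lambda_0=p\Lambda_0$), not merely because $M$ is a nonzero lattice.
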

 
As in the polarized case, we obtain that every $j\in J^{{\rm np}}(\mathbb{Q}_p)$ that is an automorphism of some $M$ with $M_{\tau}=\Lambda_0$ is also an automorphism of $\Lambda_0$. As in the polarized case, Lemma \ref{lemmaKY63} together with Remark \ref{remscalarZp} now imply that Theorem \ref{thmgenauRZnp} follows from the next claim.
\begin{claim}\label{keyclaimnp}
Let $g\geq 4$. For $s=2$ and all $p$, and for $s=3$ if $p=2$, there is a non-empty subscheme $Y_s$ of $\mathcal C_{\Lambda_0}$ such that for every $M\subset N$ corresponding to an $\overline{\mathbb F}_p$-valued point of $Y_s$ and every $j\in J^{{\rm np}}_b(\mathbb Q_p)$ stabilizing $\Lambda_0$ and $M$, we have that $j$ induces on $\Lambda_0/\Pi^s\Lambda_0$ a multiple of the identity. This factor is in $\mathbb{Z}_p^{\times}$ if $s=1$ or $2$ and in $\mathbb{Z}_p^{\times}+p\mathbb{Z}_{p^2}$ if $s=3$. 
\end{claim}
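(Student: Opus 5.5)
We follow the structure of Section \ref{secpfoort}. The non-polarized situation is simpler in one respect: by the last proposition, the only condition on $v$ is that the $a_{i,0}$ are $\mathbb{F}_{p^2}$-linearly independent, and all higher $a_{i,l}$ are free. It is harder in another respect: we lose the relation $\lambda\sigma(\lambda)=1$ coming from the pairing, so the scalar is only pinned down up to $\mathbb{Z}_p^{\times}$ rather than $\pm1$. As in Remark \ref{remclaim}, there are only finitely many torsion $j$ stabilizing $\Lambda_0$, since $\GL_g(\mathcal O_D)\cap V_{p,3}$ is torsion free of finite index. So it suffices to treat one fixed $j$ at a time and produce a single $v$ with $j(\mathcal D v)\neq \mathcal D v$. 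We proceed iteratively in $s=1,2,3$, at each step assuming that $j$ is already scalar modulo $\Pi^{s-1}\Lambda_0$.

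For $s=1$ we repeat Step 1 of Subsection \ref{sec41} verbatim. The vector $\bar v$ and all $\tau^i\bar v$ are eigenvectors of $j_1$. Since now all $a_{i,0}$ are free, the generic-minor argument gives that $\tau^g\bar v$ is not a combination of few basis vectors. This shows the eigenvalue $\lambda$ is $\sigma^2$-fixed, so $j_1=\lambda\in\mathbb{F}_{p^2}$. Step 2 also goes through unchanged, using that the $a_{i,1}$ are now free for all $i$: we get $\sigma(\lambda)=\lambda$, so $\lambda\in\mathbb{F}_p^{\times}$. Multiplying $j$ by the Teichmüller lift $[\lambda]^{-1}\in\mathbb{Z}_p^{\times}$, which is an automorphism of every $M$, we may assume $j_1=1$.

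For $s=2$, write $j_2(X_i)=X_i+\sum_l\delta_{li}Y_l$. The condition $j_2M_2\subseteq M_2$ becomes the rank-$2$ condition on the matrix \eqref{eqclaims2}. Suppose some off-diagonal or non-scalar part of $\delta$ is nonzero. Since the $a_{i,0}$ vary in an open subset of $\mathbb{A}^g$, some $3\times3$ minor is a polynomial in them that is not identically zero. This is checked by degree considerations as before, and no longer needs the "$p-1$ further solutions" trick. Hence $\delta$ is a scalar $\mu\in\mathbb{F}_{p^2}$. We must also show $\mu\in\mathbb{F}_p$, so that $j_2$ is a scalar of $\mathbb{Z}_p^{\times}$ modulo $p$. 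For this we use the next coefficients $a_{i,2}$ as in Step 2: comparing $j_2$-images with the free coefficients should force $\mu-\sigma(\mu)=0$. Alternatively, note that $1+\mu\Pi$ commuting with $F$ already restricts $\mu$. After multiplying by an element of $1+p\mathbb{Z}_p$, we reduce to $j_2=1$.

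For $s=3$ and $p=2$, write $j_3(X_i)=X_i+\sum\delta_{ji}pX_j$ and consider the $4\times g$ matrix $A$. Since the $a_{i,0}$ are now completely free, we may argue as in Cases 1 and 2 of Subsection \ref{sec43}. We perturb a single coordinate, for instance $a_{l,0}\mapsto a_{l,0}+a_{l',0}$ for any $l'\neq l$, or $a_{l,0}\mapsto \varepsilon a_{l,0}$. Subtracting the resulting determinants and applying Lemma \ref{lempreps3} kills all off-diagonal $\delta_{il}$. Then $j_3$ is diagonal for every basis, hence a scalar in $1+p\mathbb{Z}_{p^2}$, which lies in $\mathbb{Z}_p^{\times}+p\mathbb{Z}_{p^2}$ as claimed.

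The main obstacle I expect is the absence of the polarization identity $\lambda\sigma(\lambda)=1$. This forces us to carry out the rationality steps ($\sigma$-invariance of the scalar) separately at each level, which is why for $s=3$ one only gets $\mathbb{Z}_p^{\times}+p\mathbb{Z}_{p^2}$. The argument then relies on Remark \ref{remscalarZp} to conclude that torsion in this set consists of unit-root scalars.
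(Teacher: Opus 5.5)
Your route differs from the paper's. The paper reruns Section \ref{secpfoort} on the self-dual subfamily, replaces the use of $\lambda\sigma(\lambda)=1$ by normalizing with $[j_1]^{-1}\in\mathbb{Z}_p^{\times}$, and notes that at $s=3$ being a scalar is all that is needed. Redoing everything directly with all $a_{i,l}$ free is a legitimate alternative, and your $s=1$ step is fine. However, there are two genuine gaps.

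\textbf{Your $s=2$ step does not reach the claim.} After normalizing $j_1=1$ you have $j_2=1+\delta\Pi$ modulo $p=\Pi^2$. The claim requires this to be a $\mathbb{Z}_p^{\times}$-scalar modulo $p$, i.e.\ $\delta=0$.
\begin{itemize}
\item You only rule out non-scalar $\delta$, and for $\delta=\mu\cdot\mathrm{id}$ you aim for $\mu\in\mathbb{F}_p$. That is the wrong target: for $\mu\in\mathbb{F}_p^{\times}$, $1+[\mu]\Pi$ sends $X_i\mapsto X_i+\mu Y_i$ in $\Lambda_0/p\Lambda_0$, so it is not congruent to any element of $\mathbb{Z}_p$.
\item Multiplying by an element of $1+p\mathbb{Z}_p$ does not change $j$ modulo $p$ at all, so you never obtain $j_2=1$.
\item Your alternative is also false: $1+[\mu]\Pi$ commutes with $F$ for every $\mu\in\mathbb{F}_{p^2}$.
\end{itemize}
The missing observation is that the scalar case is not special. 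For $\delta=\mu\cdot\mathrm{id}$, the third row of \eqref{eqclaims2} is $\mu^{\sigma}a_{i,0}^{\sigma}$. The rows $a_{i,0}^{\sigma^2}$, $a_{i,0}$, $a_{i,0}^{\sigma}$ have a non-vanishing $3\times3$ minor whenever three of the $a_{i,0}$ are $\mathbb{F}_p$-linearly independent (a Moore determinant), which always holds here. Hence $\mu=0$. This is exactly how the $s=2$ part of the proof of Claim \ref{keyclaim} is written: it assumes only that some $\delta_{l_0i}\neq0$, diagonal or not, and concludes that all $\delta_{li}$ vanish. No rationality step is needed beyond $s=1$.

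\textbf{Your $s=3$ argument does not cover $g=4$,} which is part of the claim. Perturbing one coordinate $a_{l,0}$ and applying Lemma \ref{lempreps3} requires a $4\times4$ submatrix of $A$ whose columns all differ from $l$, so that only the last row changes. That needs $g\geq5$. For $g=4$ the only maximal minor contains column $l$, whose top three entries move as well. The paper handles $g=4$ separately in Case 3, using the quadratic term in $a_4$ together with a base change, and explicitly relies on that case in Section \ref{secnonpol}.

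In your non-polarized setting there is an easy substitute. Since all $a_{i,0}$ are free, $\det A$ vanishes identically as a polynomial in $a_1,\dots,a_4$. Take $i\neq j$ and $\{j,k,n\}=\{1,2,3,4\}\setminus\{i\}$. The monomial $a_j^{p^2+1}a_k^{p^2}a_n^{p^4}$ arises only from the term $\delta_{ij}a_j^{p^2}$ times the cofactor of column $i$, which forces $\delta_{ij}=0$. Some argument of this kind has to be supplied.
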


Most of the proof of this claim follows from the proof of Claim \ref{keyclaim}. Indeed, there we even showed that there are such lattices $M\subseteq \Lambda_0$ which are in addition self-dual. On the other side, we only considered $j\in J(\mathbb{Q}_p)\subseteq J^{{\rm np}}_b(\mathbb Q_p)$. However, this condition is only used in two places: For the first time in the last few lines of Section \ref{sec41}, to show that the $\sigma$-stable scalar $j_1$ is in fact $\pm 1$. We do not have that assertion in our case, and can instead multiply $j$ by $[j_1]^{-1}\in\mathbb{Z}_p^{\times}$ to reduce to the case $j_1=1$. Compatibility of $j$ with the polarization is used a second time at the beginning of Section \ref{sec43} to show that it is enough to prove that $j_3$ is a scalar. But in our case, being a scalar is already all we need to show for the claim.

Altogether we obtain Claim \ref{keyclaimnp} and thus Theorem \ref{thmgenauRZnp}. \qed


\bibliography{references}

\end{document}